\documentclass[11pt]{amsart}

\usepackage{bm}
\usepackage{hyperref}
\usepackage{setspace}
\usepackage{fullpage}
\usepackage{braket}
\usepackage{verbatim}
\usepackage{lscape}
\usepackage{tabularx}
\usepackage{tikz-cd}
\usepackage{amsmath}
\usepackage{pgfplots}
\usepackage{enumitem}
\usepackage{amsthm,thmtools}
\usepackage{extarrows}
\usepackage{xypic}
\usepackage{ulem}
\usepackage{graphicx}
\usepackage{float}
\usepackage{graphics,amssymb}
\usepackage{amsfonts}
\usepackage{latexsym}
\usepackage{epsf}
\usepackage{mathrsfs}
\usepackage{bbm}
\usepackage{hyperref}
\usepackage{color}
\usepackage[framemethod=tikz]{mdframed}
\usepackage{cancel}
\usepackage{tikz}
\usetikzlibrary{positioning,calc,intersections,through,backgrounds,matrix,arrows}
\usepackage{mathtools}
\usepackage{url}
\usepackage{fancybox}
\usepackage{dsfont}
\usepackage{bbding}
\usepackage{booktabs}

\usepackage[toc,page]{appendix}
\usepackage{setspace}
\usetikzlibrary{fit}
\usepackage[low-sup]{subdepth}

\newtheorem{theorem}{Theorem}[section]

\newtheorem*{Corollary*}{Corollary}

\newtheorem{lemma}[theorem]{Lemma}
\newtheorem{proposition}[theorem]{Proposition}

\newtheorem{claim}[theorem]{Claim}

\theoremstyle{definition}
\newtheorem{remark}{Remark}[section]

\newtheorem{definition}[theorem]{Definition}

\newtheorem{setup}[theorem]{Set-up}

\newcommand{\Gal}[2]{\mathrm{Gal}(#1/#2)}

\newcommand{\C}{\mathbb{C}}
\newcommand{\p}{\mathfrak{p}}

\newcommand{\Z}{\mathbb{Z}}
\newcommand{\Q}{\mathbb{Q}}

\newcommand{\val}{\mathrm{val}}
\newcommand{\Hom}{\mathrm{Hom}}

\newcommand{\Res}{\mathrm{Res}}
\newcommand{\cyc}{\epsilon}

\newcommand{\AAA}{\mathbb{A}}
\newcommand{\Frob}{\mathrm{Frob}}
\newcommand{\Rmin}{\mathrm{R}^{\min}}

\newcommand{\OO}{\mathrm{O}}

\newcommand{\Gl}{\mathrm{GL}}

\newcommand{\ord}{\mathrm{ord}}

\newcommand{\TT}{\mathcal{T}}

\newcommand{\Iw}{\mathrm{Iw}}

\newcommand{\Ind}{\mathrm{Ind}}

\newcommand{\tr}{\mathrm{Tr}}

\newcommand{\pcT}{\mathrm{T}}

    \DeclareFontFamily{U}{wncy}{}
    \DeclareFontShape{U}{wncy}{m}{n}{<->wncyr10}{}
    \DeclareSymbolFont{mcy}{U}{wncy}{m}{n}
    \DeclareMathSymbol{\Sha}{\mathord}{mcy}{"58}

\usepackage{stmaryrd} 
\usepackage{todonotes}

\newcommand{\GSp}{\mathrm{GSp}}

\newtheorem*{theorem*}{Theorem}
\date{}

\title{A minimal modularity lifting theorem for Siegel modular forms}
\author{Shaunak V. Deo}
\address{Department of Mathematics, Indian Institute of Science, CV Raman Road, Bengaluru 560012, India}
\email{shaunakdeo@iisc.ac.in}
\author{Bharathwaj Palvannan}
\address{Department of Mathematics, Indian Institute of Science, CV Raman Road, Bengaluru 560012, India}
\email{bharathwaj@iisc.ac.in}


\usepackage{amsrefs}

\usepackage[T1]{fontenc} % Font package
\usepackage{libertine}
\usepackage[utf8]{inputenc}
\usepackage{arydshln}

\begin{document}

\begin{abstract}
We prove a minimal modularity lifting theorem (in the spirit of Genestier--Tilouine and Pilloni) in the setting of Siegel modular forms of genus two when the residual representation arises from a stable Yoshida lift, that is, an automorphic induction of a nearly ordinary Hilbert modular eigencuspform over a real quadratic field. As applications of the underlying $R=\mathbb{T}$ theorem, we establish the freeness of a universal minimal ordinary Galois deformation ring over an Iwasawa algebra in two variables along with the uniqueness of Hida families passing through classical $p$-ordinary Siegel modular eigenforms with \textit{very regular} weights. 
\end{abstract}

\maketitle

\section{Introduction}
\label{sec:intro}
Modularity lifting theorems played a key role in Wiles' celebrated work on Fermat's last theorem \cite{MR1333035}. They are consequences of ``$R=\mathbb{T}$'' theorems which assert that a universal deformation ring parametrizing certain lifts of a mod $p$ Galois representation is isomorphic to a Hecke algebra acting on a suitable space of modular forms (or more generally, automorphic forms).
The ``$R=\mathbb{T}$'' theorem used in Wiles' work is established using the \textit{Taylor--Wiles method} \cite{MR1333036}. Since then proving modularity lifting theorems in various settings and studying their arithmetic applications have become an important theme in number theory. These theorems are mostly obtained using the Taylor--Wiles method (and its subsequent extensions). The main purpose of this paper is to establish a \textit{minimal} modularity lifting theorem in the setting of Siegel modular forms of genus $2$ using the Taylor--Wiles method when the residual representation arises from a stable Yoshida lift. Our methods closely follow the works of Genestier--Tilouine \cite{MR2234862} and Pilloni \cite{MR2920881} who establish similar theorems for Siegel modular forms of genus $2$ using the Taylor--Wiles method; see \S \ref{subsec:comparisongtp} for a comparison with earlier works. As a by-product of our main theorem, we establish the freeness of a universal minimal ordinary Galois deformation ring over an Iwasawa algebra in two variables, along with the uniqueness of $\GSp_4$ Hida families passing through certain Siegel modular forms with \textit{very regular weights}. Our $R=\mathbb{T}$ theorem plays an important role in establishing the results in a companion work \cite{DP} where we obtain local characterizations of \textit{stable Yoshida components} of Hida families of Siegel modular forms in the spirit of Ghate--Vatsal \cite{MR2139691}. We now describe our set-up and results.\\

Let $p\geq 5$ denote a prime. Let $K$ be a real quadratic field where the prime $p$ splits as $\p_1\p_2$. Let $O_K$ denote its ring of integers. We fix embeddings  $\iota : \overline{\Q} \hookrightarrow \C$ and $\iota_p : \overline{\Q} \hookrightarrow \overline{\Q_p}$. There are two embeddings $\sigma_1, \sigma_2: K \hookrightarrow \overline{\Q}$. The prime $\p_1$ is determined by $\iota_p \circ \sigma_1$. We choose the ordering of the weights of Hilbert modular forms over $K$ with respect to $\iota \circ \sigma_1$ and $\iota \circ \sigma_2$ respectively. Let $\mathfrak{M}$ be an ideal of $O_K$ co-prime to $p$. Let $S$ denote the finite set of primes of $\Q$ consisting of the primes dividing the norm of $\mathfrak{M}$, the primes ramified in the extension $K/\Q$ along with $p$ and $\infty$. Let $\Q_S$ denote the maximal extension of $\Q$ unramified outside of $S$. Let $G_{\Q,S}$ denote $\Gal{\Q_S}{\Q}$. By abuse of notation, we continue to denote by $S$ the set of primes of $K$ lying above $S$. Let $G_{K,S}$ denote the subgroup $\Gal{\Q_S}{K}.$ \\

 Let $g_0$ be a $p$-nearly-ordinary cuspidal Hilbert modular newform over $K$, with regular weight $(\kappa_1,\kappa_2)$, trivial nebentypus and level $\mathfrak{M}$. We further assume that $g_0$ is nearly ordinary at $\p_1$ and $\p_2$ (see Definition~\ref{def:ordgl2}). We suppose that  $\kappa_1 \equiv \kappa_2 \ (\mathrm{mod} \ 2)$ with $ \kappa_1 \geq\kappa_2 \geq 2$. We let $\tau:G_{K,S} \rightarrow \Gl_2(\overline{\Q}_p)$ denote the $p$-adic Galois representation associated to $g_0$. Denote by $\tau^c$, the conjugate Galois representation of $\tau$ coming from a non-trivial outer automorphism of $G_{K,S}$ induced by $\Gal{K}{\Q}$.  

\begin{setup}\label{setting:all} Let $\mathbb{F}$ be a finite field with characteristic $p$ over which the residual representation $\bar\tau:G_{K,S} \rightarrow \Gl_2(\mathbb{F})$ associated to $g_0$ is defined. Along with those stated earlier, we impose the following hypotheses throughout the paper. These are standard hypotheses that are usually imposed to make the minimal ordinary deformation problem tractable. We give the rationale behind some of these hypotheses in \S \ref{subsec:rationale}.

 \begin{enumerate}[style=sameline, align=left,label=(\scshape{Ind}), ref=\scshape{Ind},partopsep=0pt,parsep=0pt]
  \item\label{lab:ind} The residual representation $\bar\tau$  is absolutely irreducible. Furthermore, the induced representation $\Ind^\Q_K(\bar\tau)$, which we denote by $\bar\rho$, is an absolutely irreducible residual $G_{\Q,S}$-representation.
\end{enumerate}

\begin{enumerate}[style=sameline, align=left,label=(\scshape{$p$-dist}), ref=\scshape{$p$-dist},partopsep=0pt,parsep=0pt]
  \item\label{lab:pdist}  The four characters appearing in the semi-simplification of $\bar{\rho} \mid_{G_{\Q_p}}$ are distinct. 
\end{enumerate}

\begin{enumerate}[style=sameline, align=left,label=(\scshape{optimallevel}), ref=(\scshape{optimallevel}),partopsep=0pt,parsep=0pt]
    \item\label{hyp:optimallevel} The (tame) Artin conductor of $\bar\tau$ equals $\mathfrak{M}$.
\end{enumerate}

\begin{enumerate}[style=sameline, align=left,label=(\scshape{squarefree}), ref=(\scshape{squarefree}),partopsep=0pt,parsep=0pt]
    \item\label{hyp:squarefree} The ideal $\mathfrak{M}$ is generated by a squarefree integer $M$ such that $(M,p\Delta_K)=1$, where $\Delta_K$ is the discriminant of $K$.
\end{enumerate}

\begin{enumerate}[style=sameline, style=sameline, align=left,label=(\scshape{cong}), ref=(\scshape{cong}),partopsep=0pt,parsep=0pt]
    \item\label{hyp:congruence1} If $\ell$ divides $\mathfrak{M}$, then $p$ does not divide $(\ell^4-1)$. 
\end{enumerate}
\begin{enumerate}[style=sameline, style=sameline, align=left,label=(\scshape{cong-ram}), ref=(\scshape{cong-ram}),partopsep=0pt,parsep=0pt]
    \item\label{hyp:congruence2}  If $\ell$ ramifies in $K$  and $\lambda_{\ell}$ denotes the unique prime of $K$ lying above $\ell$, then the natural image of $(\ell^4-1)(a_{\lambda_\ell}(g_0)^2-(\ell+1)^2\ell^{\kappa_1-2})$ is not $0$ in $\overline{\mathbb{F}}_p$. Here, $a_{\lambda_\ell}(g_0)$ is the $T_{\lambda_\ell}$ eigenvalue of $g_0$.
\end{enumerate}

\begin{enumerate}[style=sameline, style=sameline, align=left,label=(\scshape{bigimage}), ref=(\scshape{bigimage}),partopsep=0pt,parsep=0pt]
   \item\label{hyp:bigimage} $\mathrm{SL}_2(\mathbb{F}_p) \times \mathrm{SL}_2(\mathbb{F}_p) \subset \text{Im}(\bar\tau \oplus \bar\tau^c)$.
\end{enumerate}

\end{setup}

\subsection{Stable Yoshida lifts and Hida theory for $\GSp_4$}

Let $f_0$ denote the stable Yoshida lift of $g_0$, defined over $\Q$. Note that under our assumptions, $f_0$ turns out to be a $p$-ordinary cuspidal Siegel eigenform of weight $(k_1,k_2)$ with level $\Gamma_0^{(2)}(M\Delta_K)$ (see \cite{MR3858470}) where
\begin{align} \label{formula:weights}
    k_1 = \dfrac{\kappa_1+\kappa_2}{2} \geq  k_2= \dfrac{\kappa_1-\kappa_2}{2}+2.
\end{align}

 Let $\TT$ be the local component of the ordinary $\GSp_4$-Hecke algebra passing through $f_0$ as considered by Hida \cite{MR1954939} and Pilloni \cite{MR3059119} with tame level $\Gamma_0^{(2)}(M\Delta_K)$. This Hecke algebra is a local ring and is generated by the Hecke operators $T_{\ell,0}$, $T_{\ell,1}$ and $T_{\ell,2}$ for all primes $\ell \nmid pM\Delta_K$, along with the Hecke operators $U_{p,1}$ and $U_{p,2}$. There exists a canonical subring $\Z_p\llbracket y_1,y_2\rrbracket$ --- call it $\Lambda$ henceforth --- of $\TT$, where $y_1$ and $y_2$ denote the \textit{weight} variables such that $\TT$ is a finite integral extension of $\Z_p\llbracket y_1,y_2\rrbracket$.

\subsection{Main Results}
\label{subsec:mainresults}

Let $\Rmin$ denote the universal \textit{minimal} ordinary Galois deformation ring, parametrizing minimal ordinary $\GSp_4$-valued lifts of $\bar{\rho}$. See Definition~\ref{def:orddef} for its precise definition. Our main result is an $\mathrm{R}=\mathbb{T}$ theorem, where we apply the Taylor-Wiles method \cite{MR1333036,MR1440309,fujiwara2006deformation}, following the approaches of Genestier--Tilouine \cite{MR2234862},  Tilouine \cite{MR2264659} and Pilloni \cite{MR2920881}. 

\begin{restatable}{Theorem}{theoremone}\label{thm:R=T} 
\label{thm::rist}
Under Set-up \ref{setting:all}, the natural surjective morphism $\Rmin \stackrel{\simeq}{\to} \TT$ is an isomorphism. Furthermore,  $\TT$ is a  free $\Lambda$-module with finite rank.
\end{restatable}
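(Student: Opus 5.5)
We follow the Taylor--Wiles--Fujiwara method in the $\Lambda$-adic form used by Genestier--Tilouine and Pilloni: build a Taylor--Wiles system of pairs $(\mathrm{R}_Q,\TT_Q)$ indexed by auxiliary sets $Q$ of primes, check the numerical coincidences, and patch.

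\textbf{Step 1: the map and the auxiliary deformation problems.} We first record why $\Rmin\to\TT$ exists and is surjective. By Hida--Pilloni and the constructions of Galois representations for $\GSp_4$, there is a $\TT$-valued representation lifting $\bar\rho$, defined via pseudo-characters; it is well defined by \ref{lab:ind}. It is ordinary because of \ref{lab:pdist} and the $U_{p,i}$-eigenvalues, and it is minimal at $\ell\mid M\Delta_K$ by local-global compatibility together with \ref{hyp:optimallevel}, \ref{hyp:squarefree}, \ref{hyp:congruence1} and \ref{hyp:congruence2}. Surjectivity holds because $\TT$ is generated by traces of Frobenius and the $U_{p,i}$. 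For each integer $n$ we then choose a set $Q_n$ of $r$ primes $q\equiv 1 \pmod{p^n}$ such that $\bar\rho(\Frob_q)$ has four distinct eigenvalues. We define $\mathrm{R}_{Q_n}$ by allowing ramification at $Q_n$ that is tamely diagonalizable, and let $\TT_{Q_n}$ be the ordinary Hecke algebra at level $\Gamma_0^{(2)}(M\Delta_K)\cap\Gamma_1(Q_n)$, localized at a choice of Frobenius eigenvalue. Both are algebras over $\Lambda[\Delta_{Q_n}]$, where $\Delta_{Q_n}$ is the $p$-part of $\prod_q(\Z/q)^{\times 2}$, acting through inertia, respectively through diamond operators.

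\textbf{Step 2: Galois cohomology.} The choice of $Q_n$ is made so that the dual Selmer group $H^1_{Q_n^*}(\Q,\mathrm{ad}^0\bar\rho(1))$ vanishes. This uses \ref{hyp:bigimage}. Since $\bar\rho=\Ind\bar\tau$, the adjoint $\mathfrak{sp}_4\otimes\mathbb F$ decomposes under $\mathrm{Im}(\bar\tau\oplus\bar\tau^c)\supset\mathrm{SL}_2\times\mathrm{SL}_2$ into pieces such as $\mathrm{ad}^0\bar\tau$, $\mathrm{ad}^0\bar\tau^c$ and the induced tensor piece. For each piece one needs the Chebotarev argument: a Frobenius element with distinct eigenvalues acting nontrivially on the relevant class. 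This is where $p\ge5$ and the vanishing $H^1(\mathrm{Im},\mathrm{ad}^0)=0$ enter. Next we compute, via the Greenberg--Wiles Euler characteristic formula with ordinary local conditions at $p$ (these have the right dimension by \ref{lab:pdist}) and minimal conditions at $S$ (whose local $H^0$ terms match by \ref{hyp:congruence1} and \ref{hyp:congruence2}), that the tangent space of $\mathrm{R}_{Q_n}$ over $\Lambda$ has dimension $r=\#Q_n$. Hence $\mathrm{R}_{Q_n}$ is a quotient of $\Lambda\llbracket x_1,\dots,x_r\rrbracket$.

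\textbf{Step 3: Hecke side and patching.} We need $M_{Q_n}$, the localized ordinary cohomology (or ordinary $\Lambda$-adic cusp forms) of level $Q_n$. It must be free over $\Lambda[\Delta_{Q_n}]$ and satisfy $M_{Q_n}/\mathfrak a_{Q_n}\simeq M_\emptyset$, where $\mathfrak a_{Q_n}$ is the augmentation ideal. This is the main obstacle. Following Pilloni, we use Hida's control theorem together with the vanishing of localized cohomology outside the middle degree for non-Eisenstein $\bar\rho$; this concentration uses \ref{lab:ind}, \ref{lab:pdist} and the results of Pilloni and Hida on ordinary $\Lambda$-adic forms. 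Freeness over $\Lambda[\Delta_Q]$ then follows from the freeness of the Iwahori-to-$\Gamma_1(q)$ covering acting freely on the relevant Shimura-variety cohomology. The isomorphism modulo the augmentation ideal comes from a $U_q$-eigenvalue projector argument, comparing old forms at $q$, which uses the distinctness of the Frobenius eigenvalues. With these inputs, the Fujiwara/Diamond patching lemma over $\Lambda\llbracket S_1,\dots,S_{4r}\rrbracket$ (group rank $2$ per prime) produces a patched module $M_\infty$ over $\mathrm{R}_\infty=\Lambda\llbracket x_1,\dots,x_r\rrbracket$. Depth counting shows that $M_\infty$ is free over $\mathrm{R}_\infty$ and that the patched map is an isomorphism. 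Reducing modulo the augmentation ideal gives $\Rmin\simeq\TT$ and that $M_\emptyset$ is free over $\Rmin$.

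The freeness of $\TT$ over $\Lambda$ of finite rank then follows: $M_\emptyset$ is finite free over $\Lambda$ by the control theorem and is free over $\TT$, so $\TT$ is $\Lambda$-free of finite rank.
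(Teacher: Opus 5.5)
Your Step 3 rests on inputs that are not established, and your Taylor--Wiles data are numerically inconsistent. So this is a genuine gap, not just a different route.

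\textbf{The $\Lambda$-adic patching input.} Your outline (build $\rho_\TT$, get $\Rmin\twoheadrightarrow\TT$, verify the Taylor--Wiles hypotheses, patch) matches the paper, but you patch at the $\Lambda$-adic level. That requires a module $M_{Q_n}$ with a faithful action of the \emph{coherent} Hecke algebra $\TT_{Q_n}$, free over $\Lambda[\Delta_{Q_n}]$, with $M_{Q_n}/\mathfrak a_{Q_n}M_{Q_n}\cong M_\emptyset$ and $M_\emptyset$ free over $\Lambda$.

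What you invoke is vanishing of localized cohomology outside the middle degree, plus freeness of the covering on Shimura-variety cohomology. That is the Betti-cohomology mechanism of Genestier--Tilouine and Tilouine. It controls the Hecke algebra $\TT_{(3)}$ acting on ordinary $H^3$. \S\ref{subsec:comparisongtp} explains why identifying $\TT_{(3)}$ with the $H^0$-Hecke algebra $\TT$ of the theorem is delicate unless $\Lambda$-freeness of $\TT$ is already known.

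For coherent $H^0$, the inputs actually available are only at a fixed very regular weight. These are Pilloni's horizontal control theorem and the level-lowering and inverse-$q$-stabilization isomorphisms used in Proposition~\ref{prop:TWsystemexistence}. Remark~\ref{rem:nocontrolhecke} also stresses that vertical control gives only nilpotent-kernel information on $\TT\otimes_\Lambda\Lambda/P$.

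The idea you are missing is the paper's descent:
\begin{enumerate}
\item Patch over $\mathrm{O}[\Delta_{Q_m}]$ at fixed weights $k'_1>k'_2\gg0$ to get $R^{\min}_{(k'_1,k'_2)}\cong\TT_{(k'_1,k'_2)}$.
\item Since $\Rmin/P\Rmin\cong R^{\min}_{(k'_1,k'_2)}$ and this map factors through $\TT/P\TT\twoheadrightarrow\TT_{(k'_1,k'_2)}$, one gets $\TT/P\TT\cong\TT_{(k'_1,k'_2)}$.
\item Let $n$ be the minimal $\Z_p$-rank of these algebras. Topological Nakayama gives $\Lambda^n\twoheadrightarrow\Rmin\twoheadrightarrow\TT$.
\item This composite is an isomorphism modulo each such $P$. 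Since these $P$ are Zariski dense in $\mathrm{Spec}(\Lambda)$, it is an isomorphism.
\end{enumerate}
This yields $\Rmin\cong\TT$ and $\Lambda$-freeness together, with no $\Lambda$-adic statement about modules of forms.

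\textbf{Numerology and the level at $Q$.} You give $\Delta_{Q_n}$ rank $2$ per prime, and then patch over $\Lambda\llbracket S_1,\dots,S_{4r}\rrbracket$. At the same time you claim $\mathrm{R}_{Q_n}$ is generated by $r=\#Q_n$ elements over $\Lambda$. Then a regular ring of dimension at least $3+2r$ acts faithfully on the free module $M_\infty$ through $\mathrm{R}_\infty$. This forces the image of $\mathrm{R}_\infty$ in $\mathrm{End}(M_\infty)$ to have dimension at least $3+2r$, whereas $\dim\mathrm{R}_\infty=3+r$, so the depth count cannot close.

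The paper matches one variable per prime on both sides. The type $\mathcal D_q$ condition of Definition~\ref{defi:Qdef} requires $\chi_{1,q},\chi_{2,q}$ unramified and $\chi_{2,q}\chi_{3,q}=\chi_{1,q}\chi_{4,q}$, so inertia acts through a single character of $\Delta_q$. It is paired with Klingen level $\mathrm{Kli}^{(1)}(\Z_q)$, where $\mathrm{Kli}^{(0)}/\mathrm{Kli}^{(1)}\cong(\Z/q\Z)^\times$. A Siegel-type level ``$\Gamma_0^{(2)}\cap\Gamma_1(Q_n)$'' does not produce such an abelian diamond action.

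\textbf{Omissions in Step 1.} Step 1 skips two things the paper has to prove:
\begin{itemize}
\item The pseudocharacter-built lift must be conjugated into $\GSp_4(\TT)$ compatibly with the ordinary filtration. This is where \ref{hyp:bigimage} is used, via Claim~\ref{claim:tressymp}.
\item The inertial shapes at $\ell\mid M\Delta_K$ must be established. The paper gets them by eigenvalue and Jordan-form arguments on characteristic-zero specializations plus density (Propositions~\ref{generallemma:elldividinglevel} and~\ref{generallemma:elldividingdiscK}). The authors state that local--global compatibility and the local Langlands correspondence for $\GSp_4$ are insufficient here.
\end{itemize}
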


As an immediate consequence of Theorem~\ref{thm::rist} and the vertical control theorem of Hida and Pilloni (Theorem~\ref{thm:verticalcontrol}), we get the following minimal modularity result alluded to in the title:

\begin{Corollary*}
    Suppose we are in Set-up \ref{setting:all}. Let $k'_1$ and $k'_2$ be positive integers such that $k'_i \equiv k_i \pmod{(p-1)}$ for $i=1,2$ and $k'_1 \geq k'_2 \geq 4$. Suppose $\rho : G_{\Q,S} \to \GSp_4(\overline{\Z}_p)$ is a minimal deformation of $\bar\rho$ of weight $(k'_1,k'_2)$, in the sense of Definition~\ref{defi:Qdef}. Then there exists a $p$-ordinary Siegel modular eigenform of genus $2$ with level $\Gamma_0^{(2)}(M\Delta_K)$ and weight $(k'_1,k'_2)$ such that $\rho$ is the $p$-adic Galois representation attached to it.
\end{Corollary*}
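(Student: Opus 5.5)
The proof will be a specialization argument: deduce the Corollary from Theorem~\ref{thm::rist} together with the vertical control theorem (Theorem~\ref{thm:verticalcontrol}).

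\textbf{Step 1: a point of $\Rmin$.} Let $\rho : G_{\Q,S} \to \GSp_4(\overline{\Z}_p)$ be a minimal deformation of weight $(k_1',k_2')$. Its image lies in $\GSp_4(\mathcal{O})$ for a finite extension $\mathcal{O}$ of $\Z_p$ whose residue field contains $\mathbb{F}$. Minimal deformations of weight $(k_1',k_2')$ in the sense of Definition~\ref{defi:Qdef} are in particular minimal ordinary deformations in the sense of Definition~\ref{def:orddef}: the ordinary filtration at $p$ is part of the data, and the diagonal characters are prescribed by the weight. By the universal property, $\rho$ therefore corresponds to a local homomorphism $x_\rho : \Rmin \to \mathcal{O}$.

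\textbf{Step 2: locating the weight.} We must check that the restriction of $x_\rho$ to $\Lambda \subset \Rmin$, which acts through the inertial characters on the ordinary filtration, is the arithmetic weight homomorphism $P_{(k_1',k_2')} : \Lambda \to \mathcal{O}$ attached to $(k_1',k_2')$. This is where the congruences $k_i' \equiv k_i \pmod{p-1}$ enter: they ensure that the tame parts of the diagonal characters agree with those of $\bar\rho$, so $x_\rho$ lies on the correct component of weight space. Thus $x_\rho$ factors through $\Rmin/P_{(k_1',k_2')}\Rmin$.

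\textbf{Step 3: transfer to Hecke algebras.} By Theorem~\ref{thm::rist}, $\Rmin \simeq \TT$, so $x_\rho$ gives a homomorphism
$$\TT/P_{(k_1',k_2')}\TT \to \mathcal{O}.$$
Because $k_1' \geq k_2' \geq 4$, Theorem~\ref{thm:verticalcontrol} identifies $\TT/P_{(k_1',k_2')}\TT$ with the localized Hecke algebra acting on $p$-ordinary classical cuspidal Siegel forms of weight $(k_1',k_2')$ and level $\Gamma_0^{(2)}(M\Delta_K)$. Freeness of $\TT$ over $\Lambda$ is convenient here, since it makes the specialization behave well, but it is not strictly needed. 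A homomorphism from this Hecke algebra to $\mathcal{O}$ is a system of Hecke eigenvalues, so it comes from a $p$-ordinary eigenform $f$ of that weight and level.

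\textbf{Step 4: identifying the Galois representation.} The surjection $\Rmin \to \TT$ is built so that the universal deformation pushed to $\TT$ has traces of Frobenius determined by the Hecke polynomials in $T_{\ell,0}, T_{\ell,1}, T_{\ell,2}$. Hence $\rho_f$ and $\rho$ have the same characteristic polynomials of $\Frob_\ell$ for all $\ell \notin S$. By Chebotarev, the residual absolute irreducibility (\ref{lab:ind}), and Carayol--Serre, $\rho \simeq \rho_f$.

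The main obstacle is Step 2, i.e.\ matching the weight specialization of $\Lambda$ inside $\Rmin$ with the weight normalization used in the control theorem, including Hodge--Tate shifts and the similitude character. I would carry out this check directly from the definition of the $\Lambda$-algebra structure on $\Rmin$ via the inertial characters.
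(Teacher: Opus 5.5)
Your proposal is correct and follows the paper's route. The paper treats the Corollary as an immediate consequence of Theorem~\ref{thm::rist} and Theorem~\ref{thm:verticalcontrol}: $\rho$ gives a point of $\Rmin$ lying over $P_{(k'_1,k'_2)}$, this point is transported to $\TT$, and the control theorem makes it classical. Your Step~2 is less of an obstacle than you suggest. It is exactly the identification (\ref{iso:univdefspec}), since for such $\rho$ the characters $\eta_2\epsilon$ and $\eta_3\epsilon^2$ defining the $\Lambda$-structure restrict to $\epsilon^{k'_1}$ and $\epsilon^{k'_2}$ on $I_p$.

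One claim in Step~3 is wrong as stated. Theorem~\ref{thm:verticalcontrol} does \emph{not} identify $\TT/P_{(k'_1,k'_2)}\TT$ with the fixed-weight localized Hecke algebra. Remark~\ref{rem:nocontrolhecke} stresses that the control theorems only give a surjection with nilpotent kernel. The isomorphism (\ref{eq:perfectcontroliso}) is proved only for $k'_1>k'_2\gg 0$, as a by-product of the fixed-weight Taylor--Wiles argument, and not for all $k'_1\geq k'_2\geq 4$.

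This does not damage your argument. Part (\ref{item:actualcontrol4}) of Theorem~\ref{thm:verticalcontrol} applies directly to the homomorphism $\TT\to\mathcal{O}\subset\overline{\Q}_p$ induced by $x_\rho$ through $\Rmin\cong\TT$. Equivalently, any map to the domain $\mathcal{O}$ kills the nilpotent kernel and so factors through the classical Hecke algebra. With that substitution, Step~4 goes through as you wrote it.
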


As in \cite{MR3059119}, we say that a classical Siegel modular form of genus $2$ has \textit{very regular weight} $(k'_1,k'_2)$ if $k'_1>k'_2\gg_{p,M,\Delta_K} 0$.  As another application of Theorem \ref{thm::rist}, we prove \textit{uniqueness} of the Hida family passing through $p$-ordinary Siegel modular forms with very regular weight. 

\begin{restatable}{Theorem}{theoremtwo}  \label{thm:yoshidafamily}
Suppose we are in Set-up \ref{setting:all}. Suppose $k'_1$ and $k'_2$ are integers such that $k'_i \equiv k_i \pmod{(p-1)}$. Suppose also that $f$ is a classical specialization of $\TT$ with weight $(k'_1,k'_2)$ that is very regular. Then, there exists a unique Hida family passing through $f$. 
\end{restatable}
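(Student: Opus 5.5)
The plan is to deduce uniqueness from Theorem~\ref{thm::rist}: $\TT$ is finite free over the regular local ring $\Lambda=\Z_p\llbracket y_1,y_2\rrbracket$, and we will show that $\TT$ is étale over $\Lambda$ at the height-two prime $P_f$ of $\TT$ determined by $f$. By a \emph{Hida family passing through $f$} we mean an irreducible component of $\mathrm{Spec}\,\TT$, i.e.\ a minimal prime $\mathfrak{a}\subset\TT$, with $\mathfrak{a}\subset P_f$. Uniqueness therefore means that $\TT_{P_f}$ has a unique minimal prime, and it suffices to show that $\TT_{P_f}$ is a regular local ring, hence a domain.

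First, let $\mathfrak{p}=P_f\cap\Lambda$ be the arithmetic prime of weight $(k_1',k_2')$, which is generated by two elements $y_1-a_1,\;y_2-a_2$. By freeness, $\TT_{P_f}$ is finite free over the regular ring $\Lambda_{\mathfrak p}$ of dimension $2$. If we show that the fiber $\TT_{P_f}/\mathfrak{p}\TT_{P_f}$ is a field, then $\Lambda_{\mathfrak p}\to\TT_{P_f}$ is flat with regular closed fiber, so $\TT_{P_f}$ is regular. (Alternatively, the maximal ideal of $\TT_{P_f}$ is then generated by the two weight parameters.) Thus we reduce to showing that $(\TT/\mathfrak{p}\TT)_{P_f}$ is reduced, i.e.\ that it equals the residue field $\overline{\Q}_p$-point given by $f$.

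Second, by the vertical control theorem (Theorem~\ref{thm:verticalcontrol}), valid for very regular weights, $\TT/\mathfrak{p}\TT\otimes\Q_p$ is identified with the ordinary classical Hecke algebra in weight $(k_1',k_2')$ and level $\Gamma_0^{(2)}(M\Delta_K)$, localized at the maximal ideal of $\bar\rho$. Reducedness of its localization at $f$ amounts to semisimplicity of the Hecke action on the $f$-generalized eigenspace, together with multiplicity one. Semisimplicity of the prime-to-$pM\Delta_K$ operators on cusp forms is classical (they are normal for the Petersson product). For $U_{p,1}$ and $U_{p,2}$ on the ordinary part, we will use that in very regular weight the $p$-stabilizations of a level-prime-to-$p$ form have distinct $U_p$-eigenvalues. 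This follows from \eqref{lab:pdist} together with regularity of the Hodge–Tate weights: the ordinary refinement is unique, so the ordinary subspace of each $p$-old packet is one-dimensional.

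The main obstacle is multiplicity one, i.e.\ that no two distinct eigen-systems, and no non-semisimple Jordan block, appear at $P_f$. Here we use $\Rmin\cong\TT$ a second time. The tangent space of $\TT/\mathfrak{p}$ at $f$ is the tangent space of $\Rmin/\mathfrak{p}$ at $\rho_f$, which is a fixed-weight ordinary minimal Bloch–Kato Selmer group $H^1_f(\Q,\mathrm{ad}^0\rho_f)$. We therefore need $H^1_f(\Q,\mathrm{ad}^0\rho_f)=0$. We would derive this by noting that $\Rmin[1/p]$ is finite over $\Lambda[1/p]$, so $\Rmin/\mathfrak{p}[1/p]$ is Artinian, and combining this with the fact that, in very regular weight, the crystalline and ordinary conditions coincide and the local conditions at $\ell\mid M\Delta_K$ are unobstructed by \ref{hyp:congruence1} and \ref{hyp:congruence2}. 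If a direct argument fails, we would instead invoke the Kisin/Allen-style smoothness of $\Rmin[1/p]$ at points of very regular weight, obtained via a Poitou–Tate Euler characteristic count that gives dimension $2$ equal to the relative dimension over $\Lambda$. Étaleness at $f$ then follows, and hence so does uniqueness of the family.
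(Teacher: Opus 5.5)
Your first reduction matches the paper's argument. If $(\TT/\mathfrak{p}\TT)_{P_f}$ is a field, then the maximal ideal of the two-dimensional local ring $\TT_{P_f}$ is generated by the two weight parameters. Hence $\TT_{P_f}$ is regular, so a domain, and exactly one minimal prime lies below $P_f$.

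\textbf{The gap is in showing the fibre is a field.} Your second paragraph credits the vertical control theorem with identifying $\TT/\mathfrak{p}\TT\otimes\Q_p$ with the classical ordinary Hecke algebra of weight $(k'_1,k'_2)$. It does not give that. As Remark~\ref{rem:nocontrolhecke} stresses, there is no duality between Siegel forms and Hecke algebras, so control only yields a surjection $\mathrm{sp}_{(k'_1,k'_2)}$ with \emph{nilpotent} kernel. A nilpotent kernel is exactly what could stop the Artinian local ring $(\TT/\mathfrak{p}\TT)_{P_f}$ from being a field. So semisimplicity of Hecke operators on classical forms, however you prove it, tells you nothing about $(\TT/\mathfrak{p}\TT)_{P_f}$ until you have an isomorphism. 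Note also that very regular weight is not needed for vertical control, which works for $k'_2\geq 4$. It is needed for Pilloni's horizontal control in the Taylor--Wiles patching.

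\textbf{The missing ingredient is the fixed-weight isomorphism} $R^{\min}_{(k'_1,k'_2)}\cong\TT_{(k'_1,k'_2)}$ of (\ref{eq:TWisofixedweightsk1k2}). Combined with (\ref{iso:univdefspec}) and $R^{\min}\cong\TT$, it gives the perfect control isomorphism $\TT/\mathfrak{p}\TT\cong\TT_{(k'_1,k'_2)}$ of (\ref{eq:perfectcontroliso}). After that you only need $\TT_{(k'_1,k'_2)}$ to be reduced, since a reduced Artinian local ring is a field. The paper gets reducedness from an external result using $\Z_p$-torsion-freeness; your semisimplicity argument for the prime-to-level and $U_p$ operators could plausibly replace it.

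\textbf{Your substitute, proving $H^1_f(\Q,\mathrm{ad}^0\rho_f)=0$ directly, is not established.}
\begin{itemize}
\item Artinianness of $(R^{\min}/\mathfrak{p})[1/p]$ only makes the tangent space finite-dimensional, not zero.
\item Unobstructed local conditions at $\ell\mid M\Delta_K$ do not force a global Selmer group to vanish.
\item A Poitou--Tate Euler characteristic count gives only a lower bound on dimension. Smoothness would still need a dual Selmer group to vanish, which is as hard as the original claim.
\end{itemize}
In this setting, vanishing of the adjoint Selmer group comes out of fixed-weight $R=\mathbb{T}$ plus reducedness; it is not an input.

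Finally, ``multiplicity one'' plays no role here. A commutative algebra generated by commuting semisimple operators is reduced whatever the dimensions of the eigenspaces.
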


\begin{remark}\label{rem:nocontrolhecke}
The freeness of the ordinary Hecke algebra over $\Lambda$ cannot be automatically deduced (in general) in the $\GSp_4$ setting from the vertical control theorems of Hida \cite{MR1954939} and Pilloni \cite{MR3059119} for Siegel modular forms due to the absence of duality between Siegel modular forms and Hecke algebra. If we let $P_{(k_1,k_2)}$ denote a classical height two prime ideal in $\Lambda$, then the control theorems for Siegel modular forms only let us deduce that the natural surjective ring homomorphism 
\begin{align} \label{eq:sp}
\mathrm{sp}_{(k_1,k_2)}: \TT \otimes_{\Lambda} \dfrac{\Lambda}{P_{(k_1,k_2)}} \twoheadrightarrow \mathcal{H}_{(k_1,k_2)},
\end{align}
has nilpotent kernel. Here, $\mathcal{H}_{(k_1,k_2)}$ denotes the corresponding localized Hecke algebra at finite weight $(k_1,k_2)$. See \cite[Theorem 6.16]{MR3148103} and \cite[Proposition 2.3]{hsieh_yoshida}. For these reasons, we need to appeal to ``$\mathrm{R}=\mathbb{T}$'' theorems to deduce freeness of $\TT$ over $\Lambda$. \\

Similarly one needs to appeal to $R=\mathbb{T}$ theorems at fixed weights to deduce uniqueness of Hida families in  Theorem \ref{thm:yoshidafamily}. An important ingredient towards establishing these $R=\mathbb{T}$ theorems is Pilloni's horizontal control theorem \cite{MR3059119} which is established for very regular weights. For this reason, we obtain the uniqueness result of Theorem \ref{thm:yoshidafamily} only for Siegel modular forms of weights $(k_1',k_2')$ with $k_1' > k_2' \gg 0$.
\end{remark}

\subsection{Comparison with earlier works} \label{subsec:comparisongtp}
Our approach towards proving Theorem \ref{thm::rist} follows Pilloni's strategy  in  \cite{MR2920881}, wherein he works with the deformation problem formulated by Genestier--Tilouine \cite[Section 4]{MR2234862}, verifies certain hypotheses (labeled H1, H2 and H3) and uses his horizontal control theorems \cite[Theorem 9.1]{MR3059119} to apply the refinement of Taylor--Wiles method found by Diamond \cite{MR1440309} and Fujiwara \cite{fujiwara2006deformation}. Nonetheless, we highlight some key differences from earlier works such as \cite{MR2234862}, \cite{MR2264659} and \cite{MR2920881}.  \\

Firstly, the theorems of Genestier--Tilouine \cite{MR2234862} and Tilouine \cite{MR2264659} involve the Hecke algebra $\TT_{(3)}$ acting on the $\Lambda$-adic module obtained from $H^3$ (Betti cohomology)  whereas we follow Hida \cite{MR1954939} and Pilloni \cite{MR3059119} in considering the Hecke algebra $\TT$ acting on the $\Lambda$-adic module obtained from $H^0$ (coherent cohomology). Although Eichler--Shimura type morphisms would provide us a ring map $\TT_{(3)} \rightarrow \TT$, we feel it is a delicate question to show that the natural map between these $\Lambda$-adic Hecke algebras is an isomorphism (without having established that $\TT$ itself is also free over $\Lambda$). For these reasons, we are also unable to use strong results of Mokrane--Tilouine \cite{MR1944174} for deducing freeness of $\TT$ over $\Lambda$. \\

Secondly, we rely on the explicit constructions of stable Yoshida lifts coming from the work of Hsieh--Namikawa \cite{MR3623733,MR3858470}. This forces our tame level to be a Siegel congruence subgroup.   On the other hand, Pilloni \cite{MR2920881} considers mostly squarefree levels coming from Klingen subgroups since the shape of the inertial Galois representation at the primes $\ell$ dividing the level are then better controlled. For instance, see \cite{MR2234862,MR2344630,MR2735984}.  For these reasons, the hypotheses imposed in Set-up \ref{setting:all} enable explicit computations as shown in \S \ref{sec:galoisrep}, to directly deduce various properties of the global and local Galois representations, thereby showing that the natural Galois representation valued in $\TT$ does in fact satisfy the  minimal deformation properties. These calculations are not present in the works of Genestier--Tilouine \cite{MR2234862} and Pilloni \cite{MR2920881} as they use local Langlands correspondence (LLC) for $\GSp_4$ to directly establish these properties. It also seems that the paramodular subgroups \cite{MR2344630} of $\mathrm{Sp}_4(\Z)$ have a closer connection to the Artin conductor of the $p$-adic Galois representation attached to Siegel modular forms.

\subsection{The rationale behind our hypotheses in Set-up \ref{setting:all}} \label{subsec:rationale}

The hypotheses \ref{hyp:squarefree}, \ref{hyp:congruence1} and \ref{hyp:congruence2} are used to determine the shape of the local Galois representation at primes $\ell \in S \setminus \{p\}$. These hypotheses are imposed since in our setting we are unable to glean these shapes only using LLC for $\GSp_4$ \cite{MR2800725} and results of Brooks--Schmidt \cite{MR2344630} which is in contrast with the situation in \cite{MR2234862} and \cite{MR2920881}. \\

We make the following observation about the validity of the hypothesis \ref{hyp:congruence2}.   Since $g_0$ has regular weight, it follows from the work of Carayol (\cite{MR870690}) and Blasius (\cite{MR2327298}) that the Ramanujan conjecture holds for $g_0$. See  \cite{MR2327298}, especially the remarks on page 15. Using the Ramanujan conjecture, hypothesis \ref{hyp:squarefree} along with the fact that $g_0$ has trivial nebentypus allows us to make the following observation:  if $\lambda'$ is a prime of $\mathcal{O}_K$ co-prime to $\mathfrak{M}$ and $\ell'$ is the cardinality of its residue field, then the quantity \begin{align}\label{ref:rameqn}
    a_{\lambda'}(g_0)^2 - (\ell'+1)^2(\ell')^{\kappa_1-2}
\end{align}
is non-zero. This is because if the quantity in equation (\ref{ref:rameqn}) equals zero, then as the nebentypus is trivial, the  roots of the Hecke polynomial at $\lambda'$ equal $(\ell')^{\kappa_1/2}$ and $(\ell')^{\kappa_1/2-1}$. This would force the local component at $\lambda'$ of the unitary normalization of the  automorphic representation, corresponding to $g_0$, to be a complementary series (which is forbidden by the Ramanujan conjecture). Thus, the hypothesis~\ref{hyp:congruence2} holds for all but finitely many primes $p$. However, note that it is still an open question that there are infinitely many ordinary primes for a general non-CM Hilbert modular form. \\

 The hypothesis \ref{hyp:bigimage} is required in the proof of Theorem~\ref{thm::rist} in two essential ways. Firstly, it allows us to use the Taylor-Wiles method. Secondly, it allows us to prove in Theorem \ref{thm:symplecticbasis} that the lift of $\bar\rho$ to $\TT$ is valued in $\GSp_4(\TT)$. During the course of the proof of  Theorem \ref{thm:symplecticbasis}, we verify that $\text{Im}(\bar\rho)$ satisfies a property very similar to the \textit{tr\'{e}s symplectique} property formulated by Pilloni (\cite[Definition 3.3]{MR2920881}). Note that since $\bar\tau$ is the residual representation of $g_0$, when $g_0$ is a non CM form, it follows from \cite[Proposition 3.8, Corollary 3.14]{MR2172950} that the hypothesis \ref{hyp:bigimage} holds for all but finitely many primes $p$.

\subsection*{Acknowledgements}

We thank Frank Calegari, Mladen Dimitrov, David Loeffler, Haruzo Hida, Ming-Lun Hsieh, Vincent Pilloni, Brooks Roberts, Ralf Schmidt and Jacques Tilouine for helpful correspondences and discussions that clarified many of our questions. Both SD and BP are partially supported by the Infosys Young Investigator Awards from the Infosys Foundation Bangalore, DST FIST program 2021 [TPN - 700661]  along with the SERB-MATRICS grants MTR/2023/000850 and MTR/2022/000244 respectively. 

\tableofcontents
\section{Siegel modular forms over $\Q$ with genus two} \label{sec:siegelgenus2}

In this section, we gather the required background information on Hida families associated to Siegel modular forms over $\Q$ in the genus two setting along with Galois theoretic properties of stable Yoshida lifts (automorphic inductions from Hilbert modular forms over real quadratic fields). 

Let $R$ be a commutative ring.  Recall that the general symplectic group $\GSp_4(R)$ is defined below:
\begin{align*}
\GSp_4(R) := \left\{g \in \Gl_4(R), \text{ such that there exists a $\lambda(g) \in R^\times$ satisfying } g^T \underbrace{\left[\begin{array}{cccc} 0 & 0 & 0 & 1 \\ 0 & 0 & 1 & 0 \\ 0 & - 1 & 0 & 0 \\ -1 & 0 & 0 & 0 \\ \end{array}\right]}_Jg = \lambda(g)J \right\}.
\end{align*}
The map $\lambda: \GSp_4(R) \rightarrow R^\times$ turns out to be a character, called the similitude character. \\

Let $\varPi$ be an irreducible cuspidal automorphic representation of $\GSp_4(\AAA_\Q)$. We refer the reader to \cite[Section 5]{MR546598} and \cite[Section 4.2]{MR3748235} for the definition of an irreducible cuspidal automorphic representation of $\GSp_4(\AAA_\Q)$. We shall also consider its restricted tensor product decomposition $\varPi \cong \varPi_\infty \otimes \left(\sideset{}{^\prime}\bigotimes_{\ell \text{ finite}} \varPi_\ell\right)$ involving irreducible admissible representations of $\GSp_4(\mathbb{R})$ and $\GSp_4(\Q_\ell)$ (see work of Flath \cite{MR546596}). The representation $\varPi_\infty$ is called the local component of $\varPi$ at $\infty$. Throughout this section, we shall impose the following hypotheses on $\varPi_\infty$ and the \textit{level} of $\varPi$ respectively:
\begin{enumerate}
\item  $\varPi_\infty$ is a holomorphic (limit of) discrete series representation (in the sense of \cite[Section 2.5]{MR3748235}) with \textit{regular} weight $(k'_1,k'_2)$ with $k'_1 \geq k'_2 \geq 2$.
\item  $\varPi$ has vectors that are right-invariant under the action of the adelization of a principal Siegel congruence subgroup $\Gamma(N')$ for some $N' \geq 3$ that is co-prime to~$p$.
\end{enumerate}

We let $S_{\mathrm{good}}$ denote the set of finite primes where $\varPi_\ell$ is unramified. We let $Q_{\varPi_\ell}(X)$ denote the monic degree four Hecke polynomial at $\ell$:
\begin{align*} 
Q_{\varPi_\ell}(X):=X^4 - T_{\ell,2}(\varPi) X^3  + \ell(T_{\ell,1}(\varPi)+(\ell^2+1)T_{\ell,0}(\varPi))X^2 - \ell^3T_{\ell,2}(\varPi) T_{\ell,0}(\varPi) X + \ell^6T_{\ell,0}^2(\varPi).
\end{align*} 
If $\ell$ belongs to $S_{\mathrm{good}}$, then $\varPi_\ell$ has an action of the $\Z$-algebra consisting of all $\GSp_4(\Z_\ell)$ bi-invariant continuous functions $\GSp_4(\Q_\ell) \rightarrow \overline{\Q}$ with compact support (with multiplication given by convolution). This algebra is often called the spherical Hecke algebra and denoted $\mathcal{H}(\GSp_4(\Q_\ell),\Z)$. The spherical Hecke algebra turns out (see \cite[Section 5.1.3]{MR4105535}) to be isomorphic to $\Z[T_{\ell,0},\ T_{\ell,0}^{-1}, \ T_{\ell,1}, \ T_{\ell,2} ]$.
Here, $T_{\ell,2}$, $T_{\ell,1}$ and $T_{\ell,0}$ are (respectively) the characteristic functions of the following double cosets:

\begin{align*}
\GSp_4(\Z_\ell)\left[\begin{array}{cccc} 1 \\ & 1 \\ & & \ell \\ & & & \ell \end{array} \right] \GSp_4(\Z_\ell),  \quad  \GSp_4(\Z_\ell)\left[\begin{array}{cccc} 1 \\ & \ell \\ & & \ell \\ & & & \ell^2 \end{array} \right] \GSp_4(\Z_\ell),  \quad  \ell \GSp_4(\Z_\ell).
\end{align*}

\begin{definition}\label{def:gsppordinary}
We will say that an automorphic representation $\varPi$, with weight $(k'_1,k'_2)$, is \underline{$p$-ordinary} if $p \in S_{\mathrm{good}}$ and if the $p$-adic valuations (normalized so that the $p$-adic valuation of $p$ equals $1$) of the four roots of the Hecke polynomial $Q_{\varPi_p}(X)$ inside $\overline{\Q}_p$ (via $\iota_p$) equal $0$, $k'_2-2$, $k'_1-1$ and $k'_1 + k'_2 - 3$.  
\end{definition}

Assume now that $\varPi$ is $p$-ordinary. Let $\OO_\mathcal{E}$ denote the ring of integers in a finite extension $\mathcal{E}$ of $\Q_p$ with uniformizer $\pi_\mathcal{E}$. We then consider the Iwahori subgroup $\Iw_p(\OO_\mathcal{E})$ of $\GSp_4(\OO_\mathcal{E})$ which consists of all the matrices in  $\GSp_4(\OO_\mathcal{E})$ whose reduction to $\GSp_4(\OO_\mathcal{E}/(\pi_\mathcal{E}))$ is upper-triangular. Consider the commutative subalgebra $\Z[U_{p,1},U_{p,2}]$ of the $\Z$-algebra consisting of all $\Iw_p$ bi-invariant continuous functions $\GSp_4(\Q_p) \rightarrow \overline{\Q}$ with compact support $C_c^\infty(\GSp_4(\Q_p)//\Iw_p(\Z_p),\Z)$, where $U_{p,2},U_{p,1}$ are the characteristic functions of the following double cosets:

\begin{align*}
\Iw_p(\Z_p)\left[\begin{array}{cccc} p \\ & p \\ & & 1 \\ & & & 1 \end{array} \right] \Iw_p(\Z_p),  \qquad  \Iw_p(\Z_p)\left[\begin{array}{cccc} p^2 \\ & p \\ & & p \\ & & & 1 \end{array} \right] \Iw_p(\Z_p).
\end{align*}

The dimension $\varPi_p^{\Iw_p(\Z_p)}$ turns out to be $8$. Suppose $\alpha_0$ and $\alpha_1$ are two roots of the Hecke polynomial $Q_{\varPi_p}(X)$ such that $\val_{p}(\alpha_0) = 0$ and $\val_p(\alpha_1) = k'_2-2$. Following Miyauchi--Yamauchi \cite[Section 7.1]{MR3320491}, one can consider the $p$-ordinary $p$-stabilized vector. That is, there exists a $p$-ordinary $p$-stabilized vector\footnote{the $p$-ordinary $p$-stabilized vector is unique, up to scalars, when the weight is cohomological, that is, $k'_1\geq k'_2 \geq 3$.} in $\varPi_p^{\Iw_p(\Z_p)}$ that determines (and is characterized) by the following ring homomorphism: 
\begin{align}\label{eq:Upeigenvalues}
\phi_{p-\mathrm{stab,ord}}: 	\Z[U_{p,1},U_{p,2}] & \rightarrow \overline{\Q}, \\
\notag U_{p,2} &\mapsto \alpha_0, \\
\notag  U_{p,1} &\mapsto (p^{2-k'_2}\alpha_1)\alpha_0.
\end{align}
The $p$-ordinary $p$-stabilized vector $\vec{v}$ determines a ring homomorphism: 
\begin{align}\label{eq:ordstab}\phi_{\vec{v}}: \Z\left[\left\{U_{p,1},U_{p,2}\right\}  \cup \left\{T_{\ell,0},T_{\ell,0}^{-1}, T_{\ell,1},T_{\ell,2}\right\}_{\ell \in S_{\mathrm{good}}\setminus \{p\}} \right] \rightarrow  \overline{\Q}_p.
\end{align}

We will call $\phi_{\vec{v}}$ the \textit{Hecke eigensystem with weight $(k'_1,k'_2)$}. 

After combining the works of Laumon, Taylor, Weissauer and Urban, it follows that there exists a four dimensional Galois representation associated to $\varPi$ satisfying the following properties (see \cite[Theorem 3.1]{MR4454492} and its proof for more details).
\begin{theorem}[Laumon  \cite{MR2234859}, Taylor \cite{MR1240640},  Weissauer \cite{MR2234860}, \cite{MR2530981}, Urban \cite{MR2234861}]\label{thm:GSP4weissauertaylorlaumon} \mbox{}
Let $\varPi$ be an automorphic representation of $\GSp_4(\mathbb{A}_\Q)$ with regular \textit{cohomological} weight $(k'_1,k'_2)$, that is,  $k'_1\geq k'_2 \geq 3$.
\begin{enumerate}
\item\label{thm:galoispoint1} Let $E_\varPi$ denote the field obtained by adjoining to $\Q$, all roots of the Hecke polynomial $Q_{\varPi_\ell}(X)$ for every $\ell \in S_{\mathrm{good}}$. Then, $E_\varPi$ is a number field.
\item\label{thm:galoispoint2} There exists a unique continuous semi-simple Galois representation $\varrho_{\varPi,p} :G_{\Q,S} \rightarrow \GSp_4(E_{\varPi,p})$ such that for every prime $\ell \in S_{\mathrm{good}} \setminus \{p\}$, we have the following equality of characteristic polynomials:
\begin{align} \label{eq:equalityheckecharacteristicpoly}
\det\left(X \cdot I_4-\varrho_{\varPi,p}(\Frob_\ell)\right) = Q_{\varPi_\ell}(X).
\end{align}
Here, $E_{\varPi,p}$ denotes the compositum of the fields $\iota_p(E_\varPi)$ and $\Q_p$ inside $\overline{\Q}_p$. In particular, the Galois representation $\varrho_{\varPi,p}$ is unramified at all primes $\ell \in S_{\mathrm{good}} \setminus \{p\}$. 
\item\label{thm:galoispoint3} The similitude character of $\varrho_{\varPi,p}$ is $\chi_{\varPi}\cyc^{k'_1+k'_2-3}$, where $\chi_{\varPi}$ denotes the Galois character associated to the central character of $\Pi$ and $\cyc:G_{\Q,S} \rightarrow \Z_p^\times$ denotes the $p$-adic cyclotomic character.
\item\label{thm:galoispoint4} The Galois representation $\varrho_{\varPi,p}$ is Hodge--Tate  with weights (we normalize the $p$-adic cyclotomic character to have Hodge--Tate weight $1$)
\begin{align*}
0, \quad k'_2 -2, \quad k'_1-1, \quad k'_1+k'_2-3.
\end{align*}
\item\label{thm:galoispoint5} If $\varPi$ is $p$-ordinary and is neither CAP nor endoscopic, then there exist unramified characters $\psi_0,\psi_1, \psi_2,\psi_3$ of $G_{\Q_p}$ such that
\[\varrho_{\varPi,p}|_{G_{\Q_p}} \simeq \begin{pmatrix} \psi_3\cyc^{k'_1+k'_2-3} & \star & \star & \star \\ 0 & \psi_2\cyc^{k'_1-1} & \star & \star\\ 0 & 0 & \psi_1\cyc^{k'_2-2} & \star \\
0 & 0 & 0 & \psi_0 \end{pmatrix},\]
with $\psi_0(\Frob_p)=\alpha_0$, $\psi_1(\Frob_p) = p^{2-k'_2}\alpha_1$, $\psi_2(\Frob_p) = p^{1-k'_1}\alpha_2$ and $\psi_3(\Frob_p) = p^{3-k'_1-k'_2}\alpha_3$.
Here, $\alpha_0, \alpha_1, \alpha_2, \alpha_3$ are roots of the Hecke polynomial $Q_{\varPi_p}(X)$ with valuations $0$, $k'_2-2$, $k'_1-1$ and $k'_1 + k'_2 - 3$, respectively. 
\end{enumerate}
\end{theorem}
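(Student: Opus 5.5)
This theorem is a compilation of deep results, so the plan is to assemble it from the literature rather than prove anything from scratch.

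\textbf{Points (1)--(4): existence and basic properties.} For (1), I would use the strong multiplicity one and rationality results for cohomological cuspidal representations. Since $\varPi_\infty$ is discrete series of cohomological weight $k'_1\geq k'_2\geq 3$, the Hecke eigenvalues $T_{\ell,i}(\varPi)$ occur in the Betti cohomology of Siegel threefolds with coefficients in an algebraic local system defined over $\Q$. The eigenvalue field is therefore a number field, and $E_\varPi$, obtained by adjoining roots of finitely many quartic polynomials over it, is again a number field.

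For (2), I would split into cases.
\begin{enumerate}
\item If $\varPi$ is neither CAP nor endoscopic, take the representation constructed by Taylor and Laumon via pseudo-representations and congruences, together with Weissauer's realization in the étale cohomology of Siegel threefolds. The Frobenius characteristic polynomial identity \eqref{eq:equalityheckecharacteristicpoly} comes from the Eichler--Shimura relation or the trace formula comparison.
\item In the endoscopic (Yoshida type) case, take the direct sum of the two $\Gl_2$ representations.
\end{enumerate}
Uniqueness of a semisimple such representation follows from Chebotarev density and Brauer--Nesbitt. That it lands in $\GSp_4$ follows from Weissauer's result that the representation is symplectic with the stated similitude. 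This gives (3): the similitude equals $\chi_\varPi\cyc^{k'_1+k'_2-3}$, which can be checked on Frobenius elements from the constant term $\ell^6T_{\ell,0}^2$ of $Q_{\varPi_\ell}$.

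For (4), I would use Faltings' comparison theorem applied to the étale cohomology realization, in the form of the Hodge decomposition for Siegel threefolds. The Hodge--Tate weights are read off from the BGG decomposition of the coherent cohomology.

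\textbf{Point (5): the ordinary local shape.} This is Urban's theorem. The representation is crystalline at $p$ since $p\in S_{\mathrm{good}}$, by Faltings. Its crystalline Frobenius has characteristic polynomial $Q_{\varPi_p}$ by the Katz--Messing compatibility. In the $p$-ordinary case the Newton polygon coincides with the Hodge polygon given by (4). Mazur's theorem, or equivalently weak admissibility in Fontaine's theory, then yields a filtration by crystalline subrepresentations with the prescribed slopes. This produces an upper-triangular form whose diagonal characters are the unramified characters $\psi_i$ twisted by $\cyc^{\text{HT}}$, with $\psi_i(\Frob_p)=p^{-w_i}\alpha_i$ as stated.

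\textbf{Main obstacle.} The hardest step is (2) with the symplectic structure, together with irreducibility away from the CAP and endoscopic cases, since one needs the full force of Weissauer's work. Here I would simply cite \cite[Theorem 3.1]{MR4454492}, which packages all the needed statements and whose proof assembles them.
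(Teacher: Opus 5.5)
Your overall route, which assembles the statement from Laumon, Taylor, Weissauer and Urban and ultimately rests on \cite[Theorem 3.1]{MR4454492}, is the paper's route. The paper gives no argument beyond that citation, and your outlines of (2), (4) and (5) match the standard arguments. There is, however, a genuine gap in your proof of (1). Since $S_{\mathrm{good}}$ contains all but finitely many primes, $E_\varPi$ is obtained by adjoining the roots of \emph{infinitely} many quartics, not finitely many. Knowing that their coefficients lie in a number field does not bound the compositum of their splitting fields.

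In general that compositum is unbounded. Take a stable Yoshida lift $\varPi$ of a non-CM form $g$ and a prime $\ell=\lambda\lambda^c$ split in $K$. Then $Q_{\varPi_\ell}(X)=(X^2-a_\lambda X+\ell^{\kappa'_1-1})(X^2-a_{\lambda^c}X+\ell^{\kappa'_1-1})$, so $E_\varPi$ contains $\sqrt{a_\lambda^2-4\ell^{\kappa'_1-1}}$ for every such $\lambda$. These should generate an infinite extension, just as the Frobenius roots of a non-CM elliptic curve generate infinitely many imaginary quadratic fields. So (1) cannot be proved as literally stated.

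What your cohomological argument does give, and what the cited sources assert, is that the field generated by the \emph{coefficients} of the $Q_{\varPi_\ell}$ (the Hecke eigenvalue field) is a number field. You should read (2)--(5) with that field, allowing $\varrho_{\varPi,p}$ to be defined over a finite extension of its $p$-adic completion $E$. The roots $\alpha_i$ in (5) need no adjoining: their $p$-adic valuations are pairwise distinct, so each is fixed by $\mathrm{Gal}(\overline{\Q}_p/E)$.

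A smaller point concerns (3). The Frobenius check you propose does not in general determine the similitude $\nu$. The constant term of $Q_{\varPi_\ell}$ only gives $\nu(\Frob_\ell)^2$, and the linear coefficient gives $\nu(\Frob_\ell)T_{\ell,2}$, which is vacuous when $T_{\ell,2}=0$. In this paper's own setting the ambiguity is real. The representation $\Ind_K^\Q\tau$ has trace zero at every $\ell$ inert in $K$ and is isomorphic to its twist by the quadratic character $\eta_K$ of $K/\Q$. In a basis $\{\sigma v_1,v_1,v_2,\sigma v_2\}$ as in (\ref{eq:residualembeddingGL2GSP4}), it preserves both $J$, with similitude $\cyc^{\kappa_1-1}$, and the alternating form $\mathrm{diag}(1,-1,-1,1)J$, with similitude $\eta_K\cyc^{\kappa_1-1}$.

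So the similitude in (3) is part of the data of the construction, namely Weissauer's duality pairing normalized by the central character. This is also why the paper has to fix $\bar\lambda$ with $\bar\lambda(\sigma)=1$ in Claim~\ref{claim:tressymp}. You should take (3) from the citation rather than from characteristic polynomials.
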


\subsection{Stable Yoshida lifts} \label{subsec:stableyoshidalift}

Let $g$ be a cuspidal Hilbert modular eigenform over $K$ with regular weight $(\kappa'_1,\kappa'_2)$ with $\kappa'_1 \geq \kappa'_2 \geq 2$, square-free level $\mathfrak{M}$ and trivial Nebentypus.  Let $\tau_{g}:G_{K,S} \rightarrow \Gl_2(\overline{\Z}_p)$ denote the two-dimensional $p$-adic Galois representation associated to $g$.  Note that $\det(\tau_{g})$ equals $\Res^{\Q}_{K}\left(\cyc^{\kappa'_1-1}\right)$.

\begin{definition}\label{def:ordgl2}
We say that $g$ is nearly ordinary at $\p_1$ and $\p_2$ if 
\begin{itemize}
    \item the $p$-adic valuations of the roots of the Hecke polynomial at $\p_1$ \[X^2-\iota_p(a(\p_1,g))X+p^{\kappa_1'-1}\] are $0$ and $\kappa'_1-1$, and
    \item the $p$-adic valuations of the roots of the Hecke polynomial at $\p_2$ \[X^2-\iota_p(a(\p_2,g))X+p^{\kappa'_1-1}\] 
    are $\frac{\kappa'_1-\kappa'_2}{2}$ and $\frac{\kappa'_1+\kappa'_2}{2}-1$.
\end{itemize} 
\end{definition}
Here, we assume that $p$ is a good prime, so that $(p,\mathfrak{M})=1$.  One could rephrase this definition as requiring that the Hodge polygons and the Newton polygons coincide. The definition could also be restated as requiring the eigenvalues of the normalized (as in \cite{MR960949}) Hecke operators $T_{\p_1}$ and $T_{\p_2}$ are $p$-adic units. 

We have the following proposition pertaining to the construction of an automorphic representation for ${\GSp_4}_{/\Q}$ corresponding to the automorphic induction of $g$.  The origins of this construction go back to Yoshida \cite{MR586427,MR758701} and B\"{o}cherer--Schulze-Pillot \cite{MR1096467,MR1292796,MR1475167}.

\begin{proposition}\label{thm:yoshidarepspace} 
Let $g$ be a Hilbert modular eigenform over $K$ as above.
There exists a cuspidal automorphic representation $\varPi_0$ with weight $(k'_1, k'_2)\coloneqq \left(\dfrac{\kappa'_1 + \kappa'_2}{2},\dfrac{\kappa'_1 - \kappa'_2}{2}+2\right)$ 
satisfying the following properties:
\begin{enumerate}

\item\label{con:stableYos1} For all primes $\ell \notin S \cup \{p\}$, the Hecke polynomial of $\varPi_0$ at $\ell$ equals $\det\left(X \cdot I_4-\Ind^{G_{\Q,S}}_{G_{K,S}}(\tau_{g})(\Frob_\ell)\right)$.

\item\label{con:stableYos3} There exists a fixed vector (say $f$) under the Siegel congruence subgroup $\Gamma_0^{(2)}(M\Delta_K)$ for $\varPi_0$. Here, $\Delta_K$ is the discriminant of the quadratic field $K$.
\item\label{con:stableYos4} If $g$ is nearly ordinary at $\p_1$ and $\p_2$, then  $\varPi_0$ is $p$-ordinary (in the sense of Definition \ref{def:gsppordinary}).

\end{enumerate}

\end{proposition}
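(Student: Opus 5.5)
The plan is to construct $\varPi_0$ as a theta lift from an orthogonal group rather than build it by hand, following Yoshida and B\"ocherer--Schulze-Pillot as made explicit by Hsieh--Namikawa \cite{MR3623733,MR3858470}. Let $\pi_g$ be the cuspidal automorphic representation of $\Gl_2(\AAA_K)$ attached to $g$. Since $g$ has trivial nebentypus, $\pi_g$ has trivial central character. The group $\Gl_2(\AAA_K)$ together with this central datum corresponds to the split similitude orthogonal group $\mathrm{GO}(V)$ of the four-dimensional quadratic space $V = \{x\in M_2(K): x^c = \bar{x}\}$, twisted by $K$. Hence $\pi_g$ gives an automorphic representation of $\mathrm{GSO}(V)$. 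I would then take its global theta lift to $\GSp_4$ and let $\varPi_0$ be an irreducible constituent.

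To see that the lift is nonzero and cuspidal, I would use the Rallis tower. The lift to $\Gl_2 = \GSp_2$ vanishes because $\pi_g$ is not a base change from $\Q$: otherwise $\bar\rho$ would be reducible, contradicting \ref{lab:ind}. Nonvanishing on $\GSp_4$ then follows by Rallis's tower principle, or from the explicit Yoshida theta series of \cite{MR3858470}, which give a nonzero Fourier coefficient. This is a \emph{stable} lift, i.e.\ a holomorphic generic-at-infinity choice, and the archimedean theta correspondence of Harris--Kudla and Paul shows that the discrete series of weights $(\kappa'_1,\kappa'_2)$ go to the holomorphic (limit of) discrete series of weight $\bigl(\frac{\kappa'_1+\kappa'_2}{2},\frac{\kappa'_1-\kappa'_2}{2}+2\bigr)$. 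The hypothesis $\kappa'_1\equiv\kappa'_2 \pmod 2$ is exactly what makes this weight integral.

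Property (\ref{con:stableYos1}) is the unramified theta correspondence (Howe, Kudla). At $\ell\notin S\cup\{p\}$, the Satake parameter of $\varPi_{0,\ell}$ is the image under the embedding of dual groups $L\text{-}\mathrm{GSO}(V)\to \mathrm{GSp}_4(\C)$, which is the automorphic induction. Combined with Chebotarev and the definition of $Q_{\varPi_\ell}$, this gives the equality of the Hecke polynomial with the characteristic polynomial of $\Ind(\tau_g)(\Frob_\ell)$. I would split this into cases according to whether $\ell$ is split or inert in $K$.

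Property (\ref{con:stableYos3}) is where I expect the main difficulty, since the local level must be controlled at $\ell\mid M\Delta_K$. The approach is to take Hsieh--Namikawa's explicit Schwartz functions and check local invariance at each bad prime. At $\ell\mid M$, $\ell$ is unramified in $K$ and $\pi_{g,\lambda}$ is Steinberg-like because the level is squarefree; the chosen Schwartz function is then invariant under $\Gamma_0^{(2)}(\ell)$. At $\ell\mid\Delta_K$ one uses a lattice of discriminant $\Delta_K$. This is precisely the content of their construction, so I would cite it rather than redo it.

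Property (\ref{con:stableYos4}) is computed from (\ref{con:stableYos1}) at $p$, which also holds since $p$ splits and is unramified. The four Satake roots at $p$ are the roots at $\p_1$ and at $\p_2$, suitably normalized. Their valuations are $\{0,\kappa'_1-1\}$ and $\{\frac{\kappa'_1-\kappa'_2}{2},\frac{\kappa'_1+\kappa'_2}{2}-1\}$. In terms of $(k'_1,k'_2)$ these are $0$, $k'_1+k'_2-3$, $k'_2-2$ and $k'_1-1$, which matches Definition~\ref{def:gsppordinary}.
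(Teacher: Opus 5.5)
Your treatment of (3) is the paper's argument: the paper compares Euler factors to get $Q_{\varPi_0,p}=Q_{g,\p_1}Q_{g,\p_2}$ and reads off the valuations, and citing Hsieh--Namikawa for (2) is also what the paper does. For existence and (1) the paper simply cites Roberts.

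\textbf{The gap: your theta lift comes from the wrong quadratic space.} For $V=\{x\in M_2(K): x^c=\bar x\}$, write $x=\left(\begin{smallmatrix} a & \beta\sqrt{\Delta_K}\\ \gamma\sqrt{\Delta_K} & a^c\end{smallmatrix}\right)$ with $a\in K$ and $\beta,\gamma\in\Q$. The form is then $\det x=N_{K/\Q}(a)-\Delta_K\beta\gamma$, which has signature $(2,2)$ at $\infty$.
\begin{itemize}
\item The archimedean theta correspondence for $\mathrm{GO}(2,2)\times\GSp_4(\mathbb{R})$ sends discrete series of $\Gl_2(\mathbb{R})^2$ to the \emph{large (generic)} discrete series of $\GSp_4(\mathbb{R})$. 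This is exactly how Harris--Kudla realize the nonholomorphic discrete series.
\item Holomorphic (limits of) discrete series occur only in the correspondence with a \emph{definite} space (Kashiwara--Vergne).
\item So your $\varPi_0$ is the globally generic member of the stable Yoshida packet. It violates the standing hypothesis that $\varPi_{0,\infty}$ be holomorphic of weight $(k'_1,k'_2)$, and it cannot carry the Siegel modular form required in (2).
\item The phrase ``holomorphic generic-at-infinity'' is self-contradictory. ``Stable'' refers to the irreducibility of $\Ind(\tau_g)$; the two archimedean members of the packet are precisely the holomorphic one and the generic one.
\end{itemize}
The missing step is as follows. Replace $M_2$ by a definite quaternion algebra $D/\Q$ chosen so that $D\otimes K$ is ramified only at the two real places (e.g.\ $D$ ramified at $\infty$ and at one prime not split in $K$). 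Transfer $\pi_g$ to $(D\otimes K)^\times$ by Jacquet--Langlands, which is possible since $\kappa'_i\ge 2$. Then lift from $\mathrm{GO}(V_D)$ with $V_D=\{x\in D\otimes K: x^c=\bar x\}$, which is positive definite at $\infty$. This is Yoshida's and Hsieh--Namikawa's setting, and it is the space on which the Schwartz functions you want to cite for (2) actually live.

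\textbf{Secondary issues.}
\begin{itemize}
\item The Rallis tower principle does not give nonvanishing on $\GSp_4$. It only says the first occurrence is cuspidal and happens by the stable range, which for $\dim V=4$ lies beyond $\GSp_4$. You need either Roberts' nonvanishing criterion (all local lifts nonzero, via the Rallis inner product formula) or Hsieh--Namikawa's explicit nonvanishing of Yoshida lifts.
\item Vanishing of the lift to $\GSp_2$ must come from $\pi_g\not\cong\pi_g^c$ for the $g$ in the proposition. Hypothesis \ref{lab:ind} concerns $g_0$ and $\bar\tau$, not an arbitrary $g$.
\item At $p$ you should invoke the unramified local theta correspondence, or equivalently the Euler factor identity as the paper does. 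Statement (1) is phrased via $\Frob_\ell$ and is meaningless at $p$, where $\Ind(\tau_g)$ is ramified.
\end{itemize}
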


\begin{proof}
For the construction of automorphic representation (\textit{associated to the stable Yoshida packet}) satisfying the properties prescribed in part (\ref{con:stableYos1}), see Roberts \cite[Theorem 8.6(1)]{MR1871665}. 

The existence of a Siegel modular form for $\varPi_0$ whose level equals the Siegel congruence subgroup $\Gamma_0^{(2)}(M\Delta_K)$, as in Part (\ref{con:stableYos3}), follows from work of Hsieh--Namikawa. See \cite[Section 3.7]{MR3623733} and \cite[Proposition 4.3]{MR3858470}. 

Since $p \in S_{\mathrm{good}}$ for $\varPi_0$, Part (\ref{con:stableYos4}) follows by comparing the Euler factor at $p$ for the $L$-function $L(\varPi_0,s)$ associated to $\varPi_0$ with the product of the Euler factors at $\p_1$ and $\p_2$ for the $L$-function $L_K(g,s)$ associated to $g$. Let $Q_{\varPi_0,p}(X)$ denote the degree $4$ Hecke polynomial associated to $\varPi_0$ at $p$. Let $ Q_{g,\p_i}$ denote the degree two Hecke polynomial associated to $g$ at $\p_i$, for $i \in \{1,2\}$. Since $p$ splits in $K$, one obtains an equality for all complex numbers $s$: 
\[p^{-4s}Q_{\varPi_0,p}(p^{s}) = \left(p^{-2s}Q_{g,\p_1}(p^{s})\right) \left( p^{-2s}Q_{g,\p_2}(p^{s})\right).\]
Therefore, the monic degree four polynomials $Q_{\varPi_0,p}(x)$ and $Q_{g,\p_1}(x)Q_{g,\p_2}(x)$ must coincide. Since $g$ is nearly ordinary at $\p_1$ and $\p_2$, the $p$-adic valuations of the roots of the product $Q_{g,\p_1}(x)Q_{g,\p_2}(x)$  (and hence of $Q_{\varPi_0,p}(x)$) equal
\begin{align} \label{eq:valuationofroots}
0, \qquad \underbrace{k'_2-2}_{\dfrac{\kappa'_1 - \kappa'_2}{2}}, \qquad \underbrace{k'_1-1}_{\dfrac{\kappa'_1 + \kappa'_2}{2}-1}, \qquad \underbrace{k'_1+k'_2-3}_{\kappa'_1-1}
\end{align}
This establishes the $p$-ordinarity of $\varPi_0$. 
\end{proof}

\begin{remark}
 If the stable Yoshida lift obtained in Proposition \ref{thm:yoshidarepspace} has cohomological weights (with $k'_1 \geq k'_2 \geq 3$) then the $p$-adic Galois representation $\varrho_{\varPi_0,p}$ attached to it in Theorem \ref{thm:GSP4weissauertaylorlaumon} is isomorphic to $\Ind_{G_{K,S}}^{G_{\Q,S}}(\tau_g)$. 
\end{remark}

\begin{remark}
For establishing parts (\ref{con:stableYos1}) and (\ref{con:stableYos3}) of Proposition \ref{thm:yoshidarepspace}, the assumption that $g$ is nearly ordinary is not required. 
\end{remark}

\subsection{Hida's $p$-ordinary Hecke algebra for $\GSp_4$} \label{subsec:hidagsp}

For each $i \in \{1,2\}$, we let $G_i$ denote $\Z_p^\times$. Let $\widetilde{\Lambda}$ denote the completed group ring $\Z_p\llbracket G_1 \times G_2 \rrbracket$. Consider  $G_1 \times G_2$ as a diagonal torus inside $\mathrm{Sp_4}(\Z_p)$: 
\begin{align*}
 G_1 \times G_2 &\hookrightarrow \mathrm{Sp_4}(\Z_p), \\
(g_1,g_2) & \mapsto \left[\begin{array}{cccc}g_1 & 0 & 0 & 0 \\ 0 & g_2 & 0 & 0 \\ 0 & 0 & g_2^{-1}& 0 \\ 0 & 0 & 0 & g_1^{-1} \end{array}\right]. 
\end{align*}

For each $(k'_1,k'_2) \in \Z^2$, the character
\begin{align*}
	G_1 \times G_2 & \rightarrow \Z_p^\times \\
	(g_1,g_2) &\mapsto  g_1^{k'_1} g_2^{k'_2}.
\end{align*}
determines a ring homomorphism $\varphi_{(k'_1,k'_2)}: \widetilde{\Lambda} \rightarrow \Z_p$ of completed $\Z_p$-algebras. We shall say that a prime ideal $\mathfrak{p}$ of $\widetilde{\Lambda}$ is a \textit{cohomological prime} with weight $(k'_1,k'_2)$ if it is the kernel of $\varphi_{(k'_1,k'_2)}$ with $k'_1\geq k'_2 \geq 3$. Note that the semi-local ring $\widetilde{\Lambda}$ can be decomposed into a product of local rings. Furthermore, these local rings are in one-to-one correspondence with group homomorphisms $(\upsilon^{a'},\upsilon^{b'}):\mu_{p-1} \times \mu_{p-1} \rightarrow \mathbb{F}_p^\times$. Here, the map $\upsilon:\mu_{p-1} \rightarrow \mathbb{F}_p^\times$ is obtained by composing the Hensel lift $\mu_{p-1} \hookrightarrow \mathbb{Z}_p^{\times}$ with the canonical surjection $\mathbb{Z}_p^{\times} \twoheadrightarrow \mathbb{F}_p^{\times}$. We call the corresponding local ring $\Lambda_{(a',b')}$. Observe that $\Lambda_{(a',b')}$ is isomorphic as a ring to the power series $\Z_p \llbracket y_1,y_2 \rrbracket$ in two variables. If $(k'_1,k'_2) \equiv (a',b') \pmod{(p-1)\Z^2}$, then the map $\varphi_{(k'_1,k'_2)}$ factors via $\Lambda_{(a',b')}$. The variables $y_1$ and $y_2$ are \textit{chosen} so that the morphism $\Lambda_{(a',b')} \rightarrow \overline{\Q}_p$, induced by $\varphi_{(k'_1,k'_2)}$, is defined by sending $y_i$ to $(1+p)^{k'_i}-1$ for $i \in \{1,2\}$. Suppose that $\mathcal{R}_2$ is a  reduced algebra that is finite and torsion-free over  $\Lambda_{(a',b')}$. We will say that a prime $\mathcal{P}$ of $\mathcal{R}_2$ is cohomological and has weight $(k'_1,k'_2)$ if the pullback of $\mathcal{P}$ to $\widetilde{\Lambda}$ equals $\ker(\varphi_{(k'_1,k'_2)})$. We let $\mathcal{X}_{\mathrm{coh}}(\mathcal{R}_2)$ denote the set of cohomological primes of $\mathcal{R}_2$. We also consider the following subset of cohomological primes:
\begin{multline}
\mathcal{X}^{\geq 4}_{\mathrm{temp}}(\mathcal{R}_2) \coloneqq \biggl\{\mathcal{P} \in \mathrm{Spec}(\mathcal{R}_2) : \mathcal{P} \text{ is a cohomological prime with } k'_1 > k'_2 \geq 4 \\[-1em]
\text{and } (k'_1,k'_2) \equiv (a',b') \pmod{(p-1)\mathbb{Z}^2}\biggr\}
\end{multline}

Recall from the introduction  that weight $(k_1,k_2)$ of the Yoshida lift $f_0$ fixes a congruence class $(a_0,b_0) \pmod{(p-1)\Z^2}$. We denote $\Lambda_{(a_0,b_0)}$ by $\Lambda$. By Proposition \ref{thm:yoshidarepspace}(\ref{con:stableYos4}), the automorphic representation corresponding to $f_0$ is $p$-ordinary. By Proposition \ref{thm:yoshidarepspace}(\ref{con:stableYos3}), this  automorphic representation is unramified at all the primes not dividing $M\Delta_K$. We recall the main vertical control theorem  independently due to Hida \cite[Theorem 1.1]{MR1954939} and Pilloni \cite[Theorem 7.1]{MR3059119}, \cite[Corollary 1.3]{MR2783930} and some of its consequences. See \cite[Section 2]{hsieh_yoshida}.

\begin{theorem}[Vertical control theorem (Hida, Pilloni)]\label{thm:verticalcontrol}\mbox{} 

There exists a finite, local, reduced, torsion-free and equidimensional (with dimension $3$) $\Lambda_{(a_0,b_0)}$-algebra $\TT$ and a ring homomorphism $j_4:\Z\left[\left\{U_{p,2},U_{p,1}\right\} \cup \left\{T_{\ell,2},  T_{\ell,1}, T_{\ell,0},T_{\ell,0}^{-1}\right\}_{(\ell,Mp\Delta_K)=1}\right] \rightarrow \TT$
satisfying the following properties:
\begin{enumerate}

\item\label{item:gsp4genbyheckeoperators} The image of $j_4$ is dense in $\TT$. 
\item\label{item:gsp4heckealgdensityprimes} $\mathcal{X}^{\geq 4}_{\mathrm{temp}}(\TT)$ is dense in $\mathrm{Spec}(\TT)$ with respect to the Zariski topology. 

\item \label{item:gsp4existence} There exists a ring homomorphism \[  \varPhi_{(k_1,k_2)}:\TT \rightarrow \overline{\Q}_p,\]
such that the induced map $\varPhi_{(k_1,k_2)} \circ j_4$ corresponds to the Hecke eigensystem of the ordinary $p$-stabilization of $f_0$. If $k_2 \geq 3$, then $\ker(\varPhi_{(k_1,k_2)})$ is cohomological with weight $(k_1,k_2)$.

\item\label{item:actualcontrol4} Suppose we are given a ring homomorphism $\varPhi_{(k'_1,k'_2)}:\TT \rightarrow \overline{\Q}_p$ such that $k'_1 \geq k'_2 \geq 4$. Then, the induced ring homomorphism $\varPhi_{(k'_1,k'_2)} \circ j_4$ corresponds to the Hecke eigensystem, as in equation (\ref{eq:ordstab}), of a non-zero $p$-ordinary stabilized vector of a $p$-ordinary automorphic representation $\varPi$  with level $\Gamma_0^{(2)}(M\Delta_K)$ and cohomological weight $(k'_1,k'_2)$.  If $k'_1 > k'_2 \geq 4$, then $\ker(\varPhi_{(k'_1,k'_2)}) \in \mathcal{X}^{\geq 4}_{\mathrm{temp}}(\TT)$. 
\item Suppose $\phi: \Z\left[\left\{U_{p,2},U_{p,1}\right\}  \cup \left\{T_{\ell,0},T_{\ell,0}^{-1}, T_{\ell,1},T_{\ell,2}\right\}_{(\ell,Mp\Delta_K)=1} \right] \rightarrow  \overline{\Q}_p$
corresponds to the Hecke eigensystem, as in equation (\ref{eq:ordstab}), of a non-zero $p$-ordinary stabilized vector of a $p$-ordinary automorphic representation $\varPi$  with level $\Gamma_0^{(2)}(M\Delta_K)$ and weight $(k'_1,k'_2)$  such that \begin{itemize}
\item $(a_0,b_0) \equiv (k'_1,k'_2) \mod (p-1)\Z^2$.
\item $k'_1 \geq k'_2 \geq 4$.
\end{itemize} Then, there exists a ring homomorphism \[  \varPhi_{(k'_1,k'_2)}:\TT \rightarrow \overline{\Q}_p,\]such that $\phi = \varPhi_{(k'_1,k'_2)} \circ j_4$. Furthermore, if $k'_1>k'_2 \geq 4$, then $\ker(\varPhi_{(k'_1,k'_2)}) \in \mathcal{X}^{\geq 4}_{\mathrm{temp}}(\TT)$.
\end{enumerate}

\end{theorem}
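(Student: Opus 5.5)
This is the Hida--Pilloni vertical control theorem. The plan is to assemble it from the cited works rather than reprove it. First I will construct $\TT$ as a local component of a $p$-adic ordinary Hecke algebra. Take Pilloni's (equivalently Hida's) $\Lambda$-adic module of ordinary cuspidal Siegel forms with tame level $\Gamma_0^{(2)}(M\Delta_K)$: the ordinary part $e\,H^0$ of coherent cohomology over the ordinary locus, with $e=\lim (U_{p,1}U_{p,2})^{n!}$. By \cite[Theorem 7.1]{MR3059119} and \cite[Corollary 1.3]{MR2783930} this is a finite torsion-free $\widetilde{\Lambda}$-module. Its Hecke algebra, generated topologically by the $j_4$-images of $U_{p,i}$ and the spherical operators at $\ell\nmid Mp\Delta_K$, is therefore finite over $\widetilde{\Lambda}$ and hence semi-local. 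I define $\TT$ as the localization at the maximal ideal cut out by the residual eigensystem of the ordinary $p$-stabilization of $f_0$, which lies over $\Lambda=\Lambda_{(a_0,b_0)}$. Part (\ref{item:gsp4genbyheckeoperators}) then holds by construction.

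Next I will establish the specialization statements (\ref{item:actualcontrol4}) and (5) from the classicality and control results. The key input is that for $k'_1\ge k'_2\ge 4$ in the right congruence class, the specialization of the ordinary $\Lambda$-adic module at $\ker\varphi_{(k'_1,k'_2)}$ agrees, up to the nilpotent ambiguity of Remark \ref{rem:nocontrolhecke}, with classical ordinary forms of Iwahori level at $p$. This gives both directions. Every $\overline{\Q}_p$-point of $\TT$ of such weight is a classical ordinary eigensystem. Conversely, every classical ordinary stabilized eigensystem $\phi$ as in (\ref{eq:ordstab}) that is congruent to $f_0$ factors through $\TT$, since it lies in the localized component. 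When $k'_1>k'_2$, the kernel of such a point lies in $\mathcal{X}^{\geq4}_{\mathrm{temp}}$ by definition. Part (\ref{item:gsp4existence}) is the special case $f_0$: by Proposition \ref{thm:yoshidarepspace}(\ref{con:stableYos4}) its stabilization is $p$-ordinary. Even when $k_2<4$, the ordinary projector applied to the $p$-stabilized form gives a $p$-adic ordinary form of tame level $\Gamma_0^{(2)}(M\Delta_K)$ with the stated eigensystem, so it defines a point of $\TT$. For $k_2\ge3$ that point is cohomological by definition.

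Finally I will prove the structural properties: reducedness, equidimensionality of dimension $3$, and density (\ref{item:gsp4heckealgdensityprimes}). Torsion-freeness and finiteness over the $3$-dimensional regular ring $\Lambda$ force every minimal prime of $\TT$ to lie over $0$, so every irreducible component has dimension $3$. For density, each irreducible component is finite and dominant over $\Lambda$, and the weights $(k'_1,k'_2)$ with $k'_1>k'_2\ge4$ in the congruence class are Zariski dense in $\mathrm{Spec}\,\Lambda$. Going-up/going-down for finite torsion-free extensions of a normal domain then lifts these to points on every component, and by the previous step they lie in $\mathcal{X}^{\geq4}_{\mathrm{temp}}(\TT)$. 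Reducedness comes from this density together with semisimplicity of the Hecke action at these classical points. The spherical operators away from the level are normal with respect to the Petersson product. Since the classical ordinary eigensystems are dense and the Hecke algebra embeds into the product of its specializations at such points (the module is torsion-free), any nilpotent in $\TT$ must vanish at a dense set of reduced points and is therefore zero.

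I expect the main obstacle to be this reducedness-and-density step. The classical specializations are only known to be reduced modulo nilpotents (Remark \ref{rem:nocontrolhecke}), and $U_p$ need not act semisimply. I would handle this by first using a reduced quotient that the classical eigensystems already see, namely the image of the Hecke algebra in $\prod_{\mathcal{P}}\mathcal{H}_{\mathcal{P}}^{\mathrm{red}}$. I would then check, following \cite[Section 2]{hsieh_yoshida}, that this image coincides with Hida's definition of $\TT$.
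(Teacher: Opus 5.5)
Your plan is essentially the paper's. The paper gives no argument of its own: it imports all five properties from Hida \cite[Theorem 1.1]{MR1954939}, Pilloni \cite[Theorem 7.1]{MR3059119}, \cite[Corollary 1.3]{MR2783930} and \cite[Section 2]{hsieh_yoshida}, and your supplementary derivations (equidimensionality from $\Lambda$-torsion-freeness, and density of $\mathcal{X}^{\geq 4}_{\mathrm{temp}}(\TT)$ by lying-over above the Zariski-dense weights) are sound.

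The reducedness obstacle you flag goes away if you embed $\TT$ into $\prod_P \mathcal{H}_P$, the Hecke algebras acting on the classical ordinary forms, rather than into the possibly non-reduced quotients $\TT/P\TT$ of Remark~\ref{rem:nocontrolhecke}. Granting, as in part (4), that for $k'_1>k'_2\geq 4$ these forms come from $\varPi$ unramified at $p$, the ordinary line in $\varPi_p^{\Iw_p(\Z_p)}$ is one-dimensional, so $U_{p,1}$ and $U_{p,2}$ act by scalars on each $\varPi$-isotypic piece and every $\mathcal{H}_P$ is reduced.
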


\section{Shapes and properties of Galois representations}\label{sec:galoisrep}
In this section, we establish the main properties of the Galois representation valued in the $\GSp_4$ ordinary Hecke algebra. These properties are used to get the surjective map $R^{\min} \to \TT$ mentioned in Theorem \ref{thm::rist}.

We first establish the symplectic property of the residual representation. 
Let $\{\vec{v}_1, \vec{v}_2\}$ denote a basis for the $\mathbb{F}[G_{K,S}]$ representation space corresponding to $\bar\tau$ (which was defined in \S~\ref{setting:all}). Choose a prime $\ell_0$ that is ramified in $K/\Q$; the hypothesis \ref{hyp:squarefree} tells us that the natural image of inertia subgroup $I_{\ell_0}$ of $G_{\Q,S}$ inside  $\Gal{{\overline{\Q}^{\mathrm{ker}(\bar\tau)} \cdot \overline{\Q}^{\mathrm{ker}(\bar\tau^c)}}}{\Q}$ has order two. Let $\sigma$ denote an element in $I_{\ell_0}$ that maps to the non-trivial order two element. Such an element exists due to hypothesis \ref {hyp:squarefree}.  Note that $\sigma$ does not belong to $G_{K,S}$, but $\sigma^2$ belongs to $G_{K,S}$.

We consider the action of $G_{\Q,S}$ on the ordered basis $\{\sigma\cdot \vec{v}_1,  \vec{v}_1,  \vec{v}_2,  \sigma\cdot \vec{v}_2\}$ of the induced representation $\Ind^{G_{\Q,S}}_{G_{K,S}}(\bar\tau)$ giving us a homomorphism $\bar\rho:G_{\Q,S} \rightarrow \Gl_4(\mathbb{F})$. For an element $h$ in $G_{K,S}$,  let $\bar\tau(h)= \begin{pmatrix} \bar{a}_h & \bar{b}_h \\ \bar{c}_h & \bar{d}_h \end{pmatrix}$ and $\underbrace{\bar\tau(\sigma^{-1}h\sigma)}_{\bar\tau^c(h)}= \begin{pmatrix} \bar{x}_h & \bar{y}_h \\ \bar{u}_h & \bar{v}_h \end{pmatrix}$. We then have
\begin{align} \label{eq:residualembeddingGL2GSP4}
    \bar\rho(\sigma)= \begin{pmatrix}
0 & 1 & 0 & 0 \\1 & 0 & 0 & 0\\ 0 & 0 & 0 & 1 \\ 0 & 0 & 1 & 0
\end{pmatrix}, \qquad \bar\rho(h) = \begin{pmatrix} 
\bar{x}_h & 0 & 0 & \bar{y}_h \\ 0 & \bar{a}_h & \bar{b}_h& 0 \\ 0 & \bar{c}_h & \bar{d}_h & 0 \\ \bar{u}_h & 0 & 0 & \bar{v}_h  \end{pmatrix}.
\end{align}
 
Since the character $\det(\bar\tau)$ is obtained as the restriction of a $G_{\Q,S}$-character, we see that $\mathrm{Image}(\bar\rho)$ lands inside $\GSp_4(\mathbb{F})$. Let $\bar\lambda:G_{\Q,S} \rightarrow \mathbb{F}^\times$ denote the similitude character obtained from $\bar\rho$.
Let $\omega$ denote the Hensel lift of the mod-$p$ cyclotomic character.

\begin{theorem} \label{thm:symplecticbasis} \mbox{}
\begin{enumerate}
\item\label{hyp:symppairing} There exists a representation
\begin{align*}
\rho_{\TT} : G_{\Q,S} \to \GSp_4(\TT)
	\end{align*}
    lifting $\bar\rho$ such that for all $\ell \notin S$,  we have $\mathrm{Trace}({\rho}_{\TT}(\Frob_\ell))=T_{\ell,2}$.
  Furthermore, for every element $g$ in the decomposition group $\Gal{\overline{\Q}_p}{\Q_p}$,
{\allowdisplaybreaks\begin{align*}
  \rho_{\TT}(g) &= \left(\begin{array}{cccc}  \dfrac{\widetilde{\lambda}}{\widetilde{\psi}_0}(g)  & \star & \star  & \star \\ 0 & \dfrac{\widetilde{\lambda}}{ \widetilde{\psi}_1\cyc^{-2}\omega^{b_0}\widetilde{\kappa}_2}(g) & \star & \star \\ 0 & 0   & \widetilde{\psi}_1\cyc^{-2}\omega^{b_0}\widetilde{\kappa}_2(g) & \star \\ 0 & 0 & 0 & \widetilde{\psi}_0(g) \end{array} \right).
\end{align*}
}
Here, $\widetilde{\lambda}:G_{\Q,S} \rightarrow \TT^\times$  is a global character uniquely characterized by sending $\Frob_\ell$ to $\ell^3 T_{\ell,0}$ for all $\ell$ not dividing $M\Delta_K$. Here, \[\widetilde{\kappa}_2:G_{\Q_p} \rightarrow \Gal{\Q_p^{\mathrm{cyc}}}{\Q_p} \cong 1+p\Z_p \hookrightarrow \Z_p\llbracket1+p\Z_p\rrbracket^\times \cong \Z_p\llbracket y_2\rrbracket^\times\] is the tautological character coming from the second weight variable. Furthermore, for each $i \in \{0,1\}$, the characters $\tilde{\psi}_i:G_{\Q_p} \rightarrow \TT^\times$ are unramified with \[\tilde{\psi}_0(\Frob_p)=U_{p,2}, \qquad \tilde{\psi}_1(\Frob_p)=U_{p,1}/U_{p,2}.\]
\item\label{hyp:localshaperamlevel}  If $\ell$ is a prime dividing $M$, then there exists a matrix $P_{\ell} \in \GSp_4(\TT)$ such that $ {\rho}_{\TT}(I_{\ell})$ is the pro-cyclic group generated by \[P_{\ell}\begin{pmatrix} 1 & 1 & 0 & 0 \\ 0 & 1 & 0 & 0\\0 & 0 & 1 & -1 \\ 0 & 0 & 0 & 1\end{pmatrix}P_{\ell}^{-1}.\]

\item\label{hyp:localshapediscK} If $\ell$ ramifies in $K$, then there exists a matrix $P_\ell \in \GSp_4(\TT)$ such that $ {\rho}_{\TT}(I_{\ell})$ is the cyclic group generated by \[P_{\ell}\begin{pmatrix} 1 & 0 & 0 & 0 \\ 0 & -1 & 0 & 0\\0 & 0 & -1 & 0 \\ 0 & 0 & 0 & 1\end{pmatrix}P_{\ell}^{-1}.\]

\end{enumerate}
\end{theorem}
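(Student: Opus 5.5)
The construction runs through pseudo-representations on $\TT$, which is reduced and has Zariski-dense classical points. For each $\mathcal{P}\in\mathcal{X}^{\geq 4}_{\mathrm{temp}}(\TT)$, Theorem~\ref{thm:verticalcontrol}(\ref{item:actualcontrol4}) gives a $p$-ordinary cuspidal $\varPi$ of cohomological weight, and Theorem~\ref{thm:GSP4weissauertaylorlaumon} attaches $\varrho_{\varPi,p}$. Its residual representation has the same Frobenius characteristic polynomials as $\bar\rho$ (congruence of Hecke eigenvalues modulo the maximal ideal), so it is $\bar\rho$ by \ref{lab:ind}; in particular $\varrho_{\varPi,p}$ is irreducible.

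\textbf{Step 1: the $\TT$-valued representation.} I would embed $\TT\hookrightarrow\prod_{\mathcal{P}}\overline{\Q}_p$, which is injective by reducedness and density (Theorem~\ref{thm:verticalcontrol}(\ref{item:gsp4heckealgdensityprimes})). The product of the traces of the $\varrho_{\varPi,p}$ is a $4$-dimensional pseudocharacter. Its values on Frobenii are $T_{\ell,2}\in\TT$, so by Chebotarev and continuity it is $\TT$-valued. Since $\bar\rho$ is absolutely irreducible, Carayol/Nyssen--Rouquier gives a unique-up-to-conjugacy $\rho:G_{\Q,S}\to\Gl_4(\TT)$ lifting $\bar\rho$ with these traces. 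The similitude character $\widetilde{\lambda}$ is defined by interpolating $\chi_\varPi\cyc^{k'_1+k'_2-3}$; at each classical point it equals $\ell^3T_{\ell,0}(\varPi)$ on $\Frob_\ell$, by Theorem~\ref{thm:GSP4weissauertaylorlaumon}(\ref{thm:galoispoint3}) and the Hecke polynomial.

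\textbf{Step 2: symplecticity.} I expect this to be the main obstacle. We have $\rho^\vee\otimes\widetilde\lambda\cong\rho$, since both have the same trace, checked pointwise at the dense classical points where it holds by (\ref{thm:galoispoint3}). This gives a nondegenerate pairing $\langle\ ,\ \rangle$ on $\TT^4$, unique up to $\TT^\times$ because $\mathrm{End}(\bar\rho)=\mathbb{F}$. It must still be shown to be alternating and not symmetric. Reducing modulo $\mathfrak{m}$, the pairing becomes a $\bar\rho$-equivariant form, hence a scalar multiple of the standard $J$ in the basis \eqref{eq:residualembeddingGL2GSP4}. To exclude a symmetric invariant form I would use \ref{hyp:bigimage}: the image contains $\mathrm{SL}_2(\mathbb{F}_p)\times\mathrm{SL}_2(\mathbb{F}_p)$ acting on the two $2$-dimensional blocks, together with the swap $\bar\rho(\sigma)$. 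The invariant bilinear forms are then one-dimensional and alternating; this is essentially Pilloni's ``tr\`es symplectique'' check. Writing the form as a sum of a symmetric and an antisymmetric part, both are $G$-invariant and the symmetric part vanishes mod $\mathfrak{m}$. By uniqueness up to scalar, the symmetric part is a multiple of an alternating form, so it is $0$. Finally, Hensel-lifting a symplectic basis gives $P\in\Gl_4(\TT)$ conjugating $\rho$ into $\GSp_4(\TT)$.

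\textbf{Step 3: local shapes.} At $p$, the ordinary filtration of Theorem~\ref{thm:GSP4weissauertaylorlaumon}(\ref{thm:galoispoint5}) holds at every classical point. By \ref{lab:pdist} the four residual characters are distinct, so the lines and flags are uniquely determined residually and glue by a standard argument (e.g.\ the $\Frob_p$/inertia eigenvalue projectors, or Hida's lemma on ordinary filtrations over reduced rings). The diagonal characters interpolate $\psi_0$ with $\psi_0(\Frob_p)=\alpha_0=U_{p,2}$ and $\psi_1\cyc^{k'_2-2}$ with $\psi_1(\Frob_p)=U_{p,1}/U_{p,2}$. Writing $\cyc^{k'_2}=\omega^{b_0}\cdot\widetilde\kappa_2$ specialized gives the stated form, and the remaining two characters come from the symplectic duality with $\widetilde\lambda$.

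At $\ell\mid M$ and at $\ell\mid\Delta_K$, I would compute residually from $\bar\rho=\Ind\bar\tau$. By \ref{hyp:optimallevel} and \ref{hyp:squarefree}, $\bar\tau|_{I_\lambda}$ is unipotent nontrivial at $\lambda\mid M$; for $\ell\mid\Delta_K$, $\bar\rho(I_\ell)$ is generated by an involution with eigenvalues $(1,-1,-1,1)$. Since $\ell\neq p$, $\rho(I_\ell)$ factors through tame inertia, plus a pro-$p$ part. In the ramified case, \ref{hyp:congruence2} excludes extra unipotent ramification in families, so the order-two element lifts to an order-two element conjugate to the diagonal matrix. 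In the $\ell\mid M$ case, \ref{hyp:congruence1} forces the image of inertia to be pro-$p$ and the relation $\Frob\, t\,\Frob^{-1}=t^\ell$ to make $\rho(t)-1$ nilpotent of the residual rank $2$ shape. One then lifts the normal form, using that the centralizer computations are unobstructed, to obtain $P_\ell\in\GSp_4(\TT)$.
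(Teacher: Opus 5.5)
Your Steps 1 and 2 follow the paper's route: a pseudocharacter on the reduced ring $\TT$ built from the dense set $\mathcal{X}^{\geq 4}_{\mathrm{temp}}(\TT)$, Nyssen--Rouquier, the interpolated $\widetilde{\lambda}$, autoduality, and an alternating perfect pairing detected modulo $\mathfrak{m}$. Your symmetric/antisymmetric argument is fine. In fact Schur's lemma alone makes every $\bar\lambda$-equivariant form a multiple of $J$, so \ref{hyp:bigimage} is not needed there.

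The genuine gap is in parts (\ref{hyp:localshaperamlevel}) and (\ref{hyp:localshapediscK}). For $\ell\mid M$ you claim that $\Frob\, t\,\Frob^{-1}=t^{\ell}$ and the residual shape make $\rho(g_\ell)-1$ ``nilpotent of the residual rank $2$ shape'', and you then ``lift the normal form''. This does not work, for three reasons.
\begin{itemize}
\item The residual Jordan type does not control the characteristic-zero one. A residual unipotent of type $(2,2)$ has symplectic lifts that are regular unipotent of type $(4)$: exponentiate the normal-form nilpotent with its $(2,3)$-entry replaced by a uniformizer. This is exactly the non-minimal shape that must be excluded.
\item \ref{hyp:congruence1} alone does not force unipotence, since it allows $\ell$ to have order $3$ modulo $p$.
\item Pro-$p$-ness of $\rho(I_\ell)$ comes from $\bar\rho(I_\ell)$ being a $p$-group, not from \ref{hyp:congruence1}.
\end{itemize}

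The missing input is the residual structure of Frobenius at $\ell$. By Lemma~\ref{lemma:residualshapedividinglevel}, $\bar\rho(\Frob_\ell^2)$ has only two distinct eigenvalues. At each classical point, solve $XB=B^{\ell}X$ for a Jordan form $B$ with a block of size $\geq 3$. This forces a chain of $\Frob_\ell$-eigenvalues with successive ratios $\ell$, so by \ref{hyp:congruence1} $\bar\rho(\Frob_\ell^2)$ would have at least three distinct eigenvalues, a contradiction. The order-$3$ eigenvalue case is excluded the same way: there $\Frob_\ell^3$ is scalar on the three permuted eigenlines. Alternatively it fails by symplecticity, since $1$ would occur with odd multiplicity. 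This gives $(\rho(g_\ell)-1)^2=0$ at every point of $\mathcal{X}^{\geq 4}_{\mathrm{temp}}(\TT)$, hence over $\TT$ by reducedness and density. Only then does a linear-algebra lemma over local rings \cite[Lemma 9.2.1]{MR2234862} produce $P_\ell\in\GSp_4(\TT)$. ``Unobstructed centralizer computations'' cannot replace this, because a lift of $\bar\rho$ need not be conjugate to the residual normal form. This is the content of Proposition~\ref{generallemma:elldividinglevel}.

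The same issue affects $\ell\mid\Delta_K$. Saying that \ref{hyp:congruence2} ``excludes extra unipotent ramification in families'' needs an actual argument. Once $\rho(g_{\ell_0})$ is shown to be unipotent as above, any nontrivial Jordan form yields two $\Frob_{\ell_0}$-eigenvalues with ratio $\ell$. This contradicts Lemma~\ref{lemma:ramifieddiscshape} residually, where \ref{hyp:congruence2} guarantees that $\bar\chi_1/\bar\chi_2$ is neither the mod $p$ cyclotomic character nor its inverse. After that, $\rho(I_\ell)$ has order $2$. The symplectic normalization is then routine, since the $\pm1$-eigenspaces are orthogonal because $\widetilde\lambda|_{I_\ell}$ is trivial.

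A smaller omission is at $p$: after gluing the ordinary flag, you must still choose the symplectic basis adapted to it. The paper uses \ref{lab:pdist} and the diagonal characters to show that the Gram matrix in a flag-adapted basis has the shape $J'$, so the flag is isotropic. It then corrects by an upper-triangular symplectic matrix reducing to the identity. Appealing to ``symplectic duality with $\widetilde\lambda$'' for the last two characters does not by itself give upper-triangularity in a symplectic basis.
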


\begin{proof}
To prove point (\ref{hyp:symppairing}), we follow the arguments of Genestier--Tilouine \cite[Lemma 4.3.3]{MR2234862} and Pilloni \cite{MR2920881}.

Since the traces of $p$-adic Galois representations attached to Siegel cuspidal eigenforms (as in Theorem \ref{thm:GSP4weissauertaylorlaumon}) are pseudocharacters of dimension four (e.g., see \cite[Section 1.2]{MR2656025} for the definition of pseudocharacters), one can use a density argument (e.g., see \cite[Theorem A2]{hsieh_yoshida} and its proof for such an application of a density argument) using the cohomological specializations in the set $\mathcal{X}^{\geq 4}_{\mathrm{temp}}(\TT)$ to show that there exists a unique continuous pseudocharacter $\pcT : G_{\Q,S} \rightarrow \TT$ of dimension four, characterized by the following assignment
\begin{align*}
\Frob_\ell \mapsto T_{\ell,2}, \quad \forall \ell \notin S.
\end{align*}
Therefore, the residual pseudocharacter $G_{\Q,S} \xrightarrow{\pcT} \TT \rightarrow \mathbb{F}$ is the trace of an irreducible representation $\bar\rho:G_{\Q,S} \rightarrow \Gl_4(\mathbb{F})$. Hence, it follows from works of Nyssen and Rouquier \cite{MR1411348,MR1378546} that there exists a Galois representation $\varrho:G_{\Q,S} \rightarrow \Gl_4(\TT)$ lifting $\bar\rho$  such that $\tr(\varrho) = \pcT$. 

The definition of the Hecke operator $T_{\ell,0}$ and a density argument using the cohomological specializations in the set $\mathcal{X}^{\geq 4}_{\mathrm{temp}}(\TT)$ together allow us first to show that there exists a character $\widetilde{\lambda}: G_{\Q,S} \rightarrow \TT$ interpolating the similitude characters of the cohomological specializations. It is characterized by the following assignment:
\begin{align*}
\Frob_\ell \mapsto \ell^3 T_{\ell,0}, \quad \forall \ell \notin S.
\end{align*}
For instance, see \cite[Lemma 4.2]{hsieh_yoshida} and its proof. We note that the similitude character $\tilde{\lambda}$ lifts the residual character $\bar\lambda:G_{\Q,S} \rightarrow \mathbb{F}^\times$ given above since the $p$-adic Galois representation associated to any cohomological specialization in $\mathcal{X}^{\geq 4}_{\mathrm{temp}}(\TT)$ lifts $\bar\rho$. We also observe that its square equals  $\det(\varrho)$.

Let $V$ denote the underlying rank four $\TT$-module for the Galois representation $\varrho$.  Moreover, a similar density argument tells us that $\varrho$ is \textit{auto-dual}, i.e., we have the following isomorphism of $\TT[G_{\Q,S}]$-modules:
\begin{align} \label{eq:autoduality}
V (\widetilde{\lambda}^{-1}) &\cong \Hom_{\TT}(V,\TT),
\\ \notag
v &\mapsto \psi_{v}
\end{align}
Equation (\ref{eq:autoduality}) allows us to define three pairings $V \times V \rightarrow \TT$ as follows:
\begin{align} \label{eq:defnpairings}
\langle v_1, v_2 \rangle = \psi_{v_1}(v_2), \quad \langle v_1, v_2 \rangle_{\mathrm{alt}} = \psi_{v_1}(v_2) - \psi_{v_2}(v_1), \quad \langle v_1, v_2 \rangle_{\mathrm{sym}} = 2\langle v_1, v_2 \rangle - \langle v_1, v_2 \rangle_{\mathrm{alt}}.
\end{align}
One observes that for all $g \in G_{\Q,S}$, we have 
\begin{align} \label{eq:actionofGpairing}
  \langle gv_1, gv_2\rangle =\widetilde{\lambda}(g) \langle v_1, v_2\rangle, \quad \langle gv_1, gv_2\rangle_{\mathrm{alt}} = \widetilde{\lambda}(g) \langle v_1, v_2\rangle_{\mathrm{alt}}, \quad \langle gv_1, gv_2\rangle_{\mathrm{sym}} = \widetilde{\lambda}(g) \langle v_1, v_2\rangle_{\mathrm{sym}}.  
\end{align}
  Since $\varrho$ lifts $\bar\rho$, we consider, without loss of generality, a $\TT$-basis $\{E_1, E_2, E_3, E_4\}$ of $V$ lifting the standard symplectic basis $\left\{e_1, e_2, e_3, e_4\right\}$ of $\bar\rho$. We make the following claim: 

\begin{claim} \label{claim:tressymp}
Suppose $\langle \cdot,\cdot \rangle_{0}$ is a non-degenerate bilinear pairing on  $\mathbb{F}^4$ such that  \[\langle \bar\rho(g) v_1,\bar\rho(g) v_2 \rangle_{0}=\bar{\lambda}(g)\langle  v_1,v_2 \rangle_{0}, \qquad \forall \  g \in G_{\Q,S}, \quad  \forall \ v_1,v_2 \in \mathbb{F}^4.\]  Then, the pairing $\langle \cdot,\cdot \rangle_{0}$ is symplectic and equal to a scalar multiple of the standard symplectic pairing provided by the  matrix $J$ in $\mathbb{F}$.
\end{claim}
The isomorphism in equation (\ref{eq:autoduality}) tells us that the pairing $\langle \cdot,\cdot\rangle$, defined in equation (\ref{eq:defnpairings}), is non-zero modulo the maximal ideal of $\TT$. We note that the character $\tilde{\lambda}$ afforded by the pairings $\langle \cdot, \cdot \rangle$, $\langle \cdot, \cdot\rangle_{\mathrm{alt}}$ and $\langle \cdot, \cdot\rangle_{\mathrm{sym}}$, as in equation (\ref{eq:actionofGpairing}), modulo the maximal ideal of $\TT$ equals $\bar\lambda$. Claim \ref{claim:tressymp} allows us to deduce that the reduction of the symmetric pairing $\langle \cdot, \cdot \rangle_{\mathrm{sym}}$ modulo the maximal ideal of $\TT$ is zero. The definition of the alternating pairing $\langle \cdot, \cdot \rangle_{\mathrm{alt}}$ as given in equation (\ref{eq:defnpairings}) along with claim \ref{claim:tressymp}  tells us that the reduction of the alternating pairing $\langle \cdot, \cdot \rangle_{\mathrm{alt}}$ modulo the maximal ideal of $\TT$ is a non-zero scalar multiple of the symplectic pairing provided by the pairing matrix $J$ in $\mathbb{F}$.

\begin{proof}[Proof of Claim \ref{claim:tressymp}]
 Hypothesis \ref{hyp:bigimage} along with the embedding of the image of the $G_{K,S}$ representation $\bar\tau \oplus \bar\tau^c$ into $\GSp_4(\mathbb{F})$ given in equation (\ref{eq:residualembeddingGL2GSP4}) tells us that the matrices
\begin{align*}
 \begin{pmatrix} 1 & 0 & 0 & 1\\ 0 & 1 & 0 & 0 \\ 0 & 0 & 1 & 0 \\ 0 & 0 & 0 & 1 \end{pmatrix}, \quad \begin{pmatrix} 1 & 0 & 0 & 0\\ 0 & 1 & 0 & 0 \\ 0 & 0 & 1 & 0 \\ 1 & 0 & 0 & 1 \end{pmatrix}, \quad  \begin{pmatrix} 1 & 0 & 0 & 0\\ 0 & 1 & 1 & 0 \\ 0 & 0 & 1 & 0 \\ 0 & 0 & 0 & 1 \end{pmatrix}, \begin{pmatrix} 1 & 0 & 0 & 0\\ 0 & 1 & 0 & 0 \\ 0 & 1 & 1 & 0 \\ 0 & 0 & 0 & 1 \end{pmatrix},
\end{align*}
belong to the natural image of $\mathrm{SL}_2(\mathbb{F}_p) \times \mathrm{SL}_2(\mathbb{F}_p)$ inside $\mathrm{Image}(\bar\rho)$. Note that the restriction of the character $\bar{\lambda}$ to $\mathrm{SL}_2(\mathbb{F}_p) \times \mathrm{SL}_2(\mathbb{F}_p)$ is trivial by the construction given just before the theorem. Using these matrices, we obtain that the pairing $\langle \cdot,\cdot \rangle_{0}$  satisfies the following relations
\begin{align*}
  \langle e_1,e_1 \rangle_{0}  = \langle e_2,e_2 \rangle_{0}  = \langle e_3,e_3 \rangle_{0}  = \langle e_4,e_4 \rangle_{0}  =   \langle e_1,e_2 \rangle_{0} =  \langle e_1,e_3 \rangle_{0} =  \langle e_2,e_4 \rangle_{0} =  \langle e_3,e_4 \rangle_{0} = 0.
\end{align*} 
Similarly, using the following matrices in $\mathrm{SL}_2(\mathbb{F}_p) \times \mathrm{SL}_2(\mathbb{F}_p)$ inside $\mathrm{Image}(\bar\rho)$:
\begin{align*}
\begin{pmatrix} 1 &0 & 0 &1 \\ 0 &1 & 0 & 0 \\ 0 & 0 &1 &0 \\ 1 & 0 & 0 &2 \end{pmatrix}, \quad \begin{pmatrix} 1 &0 & 0 &0 \\ 0 &1 & 1 & 0 \\ 0 & 1 &2 &0 \\ 0 & 0 & 0 &1\end{pmatrix}
\end{align*}
we may conclude that $\langle e_1,e_4 \rangle_{0}=-\langle e_4,e_1 \rangle_{0}$ and $\langle e_2,e_3 \rangle_{0}=-\langle e_3,e_2 \rangle_{0}$. Combining our observations tells us that the pairing $\langle\cdot, \cdot \rangle_{0}$ is alternating. 
We also note that the matrix $\begin{pmatrix}
0 & 1 & 0 & 0 \\1 & 0 & 0 & 0\\ 0 & 0 & 0 & 1 \\ 0 & 0 & 1 & 0
\end{pmatrix}$ belongs to $\mathrm{Image}(\bar\rho)$ (see equation (\ref{eq:residualembeddingGL2GSP4})) and lies in $\ker(\bar\lambda)$. Using this matrix allows us to conclude that $\langle e_1,e_4 \rangle_{0}= \langle e_2,e_3 \rangle_{0}$. This allows us to conclude that the pairing matrix for  $\langle \cdot,\cdot \rangle_{0}$ equals $\langle e_1,e_4 \rangle_{0}J$. This completes the proof of the claim. 
\end{proof}

To complete the proof of point \ref{hyp:symppairing} of Theorem \ref{thm:symplecticbasis}, we proceed as follows. As $\bar\rho$ is absolutely irreducible, it follows that the cohomological specializations in $\mathcal{X}^{\geq 4}_{\mathrm{temp}}(\TT)$ are neither CAP nor endoscopic. Combining the $p$-distinguished hypothesis, Parts~\ref{thm:galoispoint3} and \ref{thm:galoispoint5} of Theorem~\ref{thm:GSP4weissauertaylorlaumon} and the density of $\mathcal{X}^{\geq 4}_{\mathrm{temp}}(\TT)$ in $\mathrm{Spec}(\TT)$,  we conclude that there exists a change-of-basis matrix $P_1$ in $\Gl_4(\TT)$ so that the restriction $P_1 \varrho P_1^{-1} \mid_{G_{\Q_p}}$ is upper-triangular with characters on diagonal as given in point \ref{hyp:symppairing}. For example, see the arguments provided in the proof of \cite[Theorem 4.6(7)]{hsieh_yoshida}. We can also find a matrix $\overline{P}_2$ in $\Gl_4(\mathbb{F})$ so that $\overline{P_2P_1 \varrho P_1^{-1}P_{2}^{-1}}$ equals $\bar\rho$. Note that the $p$-distinguished hypothesis forces $\overline{P_2}$ to be upper-triangular. Now consider an upper triangular matrix $P_2$ in $\Gl_4(\TT)$ that lifts $\overline{P_2}$. Note that the restriction $P_2P_1 \varrho P_1^{-1}P_{2}^{-1}$ lifts $\bar\rho$ and its restriction to $G_{\Q_p}$ is upper-triangular. Therefore, after scaling appropriately, the pairing matrix $J'$ for the symplectic pairing $\langle \cdot, \cdot \rangle_{\mathrm{alt}}$ with respect to this new basis --- given by the change of basis matrix $P_1^{-1}P_{2}^{-1}$ --- reduces to $J$ modulo the maximal ideal. In fact, the $p$-distinguished hypothesis along with the explicit description of the characters along the main diagonal in the local shape provided in the point \ref{hyp:symppairing} of the theorem allows us to write down 
\[J'=\begin{pmatrix} 0 & 0 & 0 & u_1 \\ 0 & 0 & u_2 & x_1 \\ 0 & -u_2 & 0 & x_2 \\ -u_1 & -x_1 & -x_2 & 0  \end{pmatrix}, \text{ with } u_1-1,u_2-1,x_1,x_2 \text{ in the maximal ideal of $\TT$}.\] One can then apply a further change of basis matrix $P_3 = \begin{pmatrix} 1 & -x_1/u_1 & -x_2/u_1u_2 & 0 \\ 0 & 1 & 0 & 0 \\ 0 & 0 & 1/u_2 & 0 \\ 0 & 0 & 0  & 1/u_1\end{pmatrix}$ which is upper-triangular, satisfies $P_3^TJ'P_3=J$ and the image of $P_3$ in $\Gl_4(\mathbb{F})$ is the identity matrix. One now directly verifies that the Galois representation $P_0 \varrho P_0^{-1}$ satisfies the hypothesis of point (\ref{hyp:symppairing}), with $P_0=P_3^{-1}P_2P_1$.

Point (\ref{hyp:localshaperamlevel}) follows from Proposition \ref{generallemma:elldividinglevel}. One applies Proposition \ref{generallemma:elldividinglevel}\ref{item:partareducedoverO} to all cohomological primes of $\mathcal{X}^{\geq 4}_{\mathrm{temp}}(\TT)$ and a density argument to show that the conclusion in equation (\ref{eq:jordanblockformsquare}) holds for $\TT$. One then deduces point (\ref{hyp:localshaperamlevel}) from Proposition \ref{generallemma:elldividinglevel}\ref{item:jordancanonicalformcharequation}. 

Point (\ref{hyp:localshapediscK}) can similarly be deduced from  proposition \ref{generallemma:elldividingdiscK}.  This concludes the proof of Theorem \ref{thm:symplecticbasis}.
\end{proof}

\subsection{Shape of the local Galois representations at bad primes}
Let $\mathrm{O}$ denote the Witt ring $W(\mathbb{F})$. Let $g_\ell$ denote a lift in $I_\ell$ of the topological generator of the pro-$p$ part (which is isomorphic to $\Z_p$)  of the tame inertia group at $\ell$. 

\begin{lemma} \label{lemma:residualshapedividinglevel}Suppose $\ell$ is a prime of $\Q$ dividing $M$. Let $\ell_0$ denote a prime of $K$ lying above $\ell$. 
\begin{enumerate}[label=(\roman*)]
\item\label{item:partibartau} Then, $\bar\tau \mid_{G_{K_{\ell_0}}}$ and $\bar\tau^c \mid_{G_{K_{\ell_0}}}$ are ramified. Furthermore, there exist unramified characters \[\bar\chi_1,\bar\chi_2,\bar\chi^c_1,\bar\chi^c_2:G_{K_{\ell_0}} \rightarrow \overline{\mathbb{F}}_p^\times,\] such that we have an isomorphism of $\overline{\mathbb{F}}_p$-representations:
\begin{align} \label{eq:basischangebartau}
\bar\tau\mid_{G_{K_{\ell_0}}} \simeq \begin{pmatrix} \bar\chi_1 & * \\ 0 & \bar\chi_2 \end{pmatrix}, \qquad \bar\tau^c\mid_{G_{K_{\ell_0}}}\simeq \begin{pmatrix} \bar\chi^c_1 & * \\ 0 & \bar\chi^c_2 \end{pmatrix},
\end{align}
with the ratios $\dfrac{\bar\chi_1}{\bar\chi_2}$ and $\dfrac{\bar\chi^c_1}{\bar\chi^c_2}$ equal to the cyclotomic character. 
\item\label{item:partiibartau} We have an isomorphism of $\overline{\mathbb{F}}_p$-representations:
\begin{align} \label{eq:barrhoshapelocaldividingM}
\bar\rho \mid_{G_{K_{\ell_0}}} \simeq \begin{pmatrix} \bar\chi_1 & \star & 0 & 0 \\ 0 & \bar\chi_2 & 0 & 0 \\ 0 & 0 & \bar\chi_1^c & \star \\ 0 & 0 & 0 & \bar\chi_2^c\end{pmatrix}.  
\end{align}
\begin{itemize}
\item  If $\ell$ is a prime of $\Q$ that splits over $K$, then we have the following relations:
\begin{align} \label{eq:relationssplitcase}
\bar\chi_1^2 = (\bar\chi^c_1)^2, \quad \bar\chi_2^2 = (\bar\chi^c_2)^2, \quad \dfrac{\bar\chi_1}{\bar\chi_2}(\Frob_{\ell_0}) = \dfrac{\bar\chi^c_1}{\bar\chi^c_2}(\Frob_{\ell_0}) = \overline{\ell}.
\end{align}
\item If $\ell$ is a prime of $\Q$ that remains inert in $K$, then we have the following relations:
\begin{align} \label{eq:relationsinertcase}
\bar\chi_1 = \bar\chi^c_1, \quad \bar\chi_2 = \bar\chi^c_2, \quad \dfrac{\bar\chi_1}{\bar\chi_2}(\Frob_{\ell_0}) = \dfrac{\bar\chi^c_1}{\bar\chi^c_2}(\Frob_{\ell_0}) = \overline{\ell^2}.
\end{align}
\end{itemize}
In particular, the image of $\bar\rho \mid_{I_{\ell_0}}$ is a cyclic group of order $p$ and that \[\bar\rho(g_{\ell}) \sim_{\overline{\mathbb{F}}_p}  \begin{pmatrix} 1 & 1 & 0 & 0 \\ 0 & 1 & 0 & 0 \\ 0 & 0 & 1 & 1 \\ 0 & 0 & 0 & 1\end{pmatrix}.\] 
\end{enumerate}
\end{lemma}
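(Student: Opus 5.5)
Since $\ell \mid M$ and, by \ref{hyp:squarefree}, $\ell$ is unramified in $K$, each prime $\ell_0$ of $K$ above $\ell$ divides $\mathfrak{M}$ exactly once. Hypothesis \ref{hyp:optimallevel} says the Artin conductor of $\bar\tau$ is $\mathfrak{M}$, so $\bar\tau\mid_{I_{\ell_0}}$ has conductor exponent exactly $1$. An exponent-one conductor for a two-dimensional representation means the inertia invariants are a line and inertia acts through its tame quotient. I will argue that the action is unipotent, not through a nontrivial tame character. The order of the tame image divides $\ell^{f}-1$ with $f\le 2$, or the image is $p$-power. If the image had prime-to-$p$ order, it would be semisimple, and inertia invariants of dimension one would force $\bar\tau\mid_{I_{\ell_0}}\simeq 1\oplus\eta$ with $\eta$ nontrivial and tamely ramified. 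Invariance under Frobenius then forces $\eta^{N\ell_0}=\eta$, so $\eta$ has order dividing $N\ell_0-1$. I would then rule this case out using the trivial nebentypus: $\det\bar\tau$ is a power of the cyclotomic character, hence unramified at $\ell_0$, so $\eta=1$, a contradiction. The image of inertia is therefore a nontrivial $p$-group, i.e. unipotent, and is generated by the image of $g_\ell$. Grothendieck's argument (conjugation by Frobenius scales the tame generator by $N\ell_0$) then gives the triangular shape \eqref{eq:basischangebartau} with unramified $\bar\chi_i$ and $\bar\chi_1/\bar\chi_2=\bar\cyc$. The same reasoning applies to $\bar\tau^c\mid_{G_{K_{\ell_0}}}$, which is $\bar\tau$ restricted to the conjugate place $\ell_0^c$; this place also divides $\mathfrak{M}$ because $\mathfrak{M}=(M)$ is Galois-stable. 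This proves \ref{item:partibartau}.

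For \ref{item:partiibartau}, $\bar\rho\mid_{G_{K,S}}=\bar\tau\oplus\bar\tau^c$ by the matrix description \eqref{eq:residualembeddingGL2GSP4}, so restricting to $G_{K_{\ell_0}}$ and applying \eqref{eq:basischangebartau} gives \eqref{eq:barrhoshapelocaldividingM}. To get the relations among the characters I would use the determinant and the local geometry. Since $\det\bar\tau=\Res(\bar\cyc^{\kappa_1-1})$ is Galois invariant, $\bar\chi_1\bar\chi_2=\bar\chi_1^c\bar\chi_2^c$. Combined with $\bar\chi_1/\bar\chi_2=\bar\chi_1^c/\bar\chi_2^c=\bar\cyc$, this gives $\bar\chi_1^2=(\bar\chi_1^c)^2$ and $\bar\chi_2^2=(\bar\chi_2^c)^2$. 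Evaluating $\bar\cyc(\Frob_{\ell_0})=N\ell_0$ gives $\bar\ell$ in the split case and $\overline{\ell^2}$ in the inert case.

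In the inert case $\ell_0=\ell_0^c$, and $\bar\tau^c\mid_{G_{K_{\ell_0}}}$ is the conjugate of $\bar\tau\mid_{G_{K_{\ell_0}}}$ by a Frobenius lift $\sigma_\ell\in G_{\Q_\ell}\setminus G_{K_{\ell_0}}$. Because the characters are unramified, conjugation does not change them, so $\bar\chi_i^c=\bar\chi_i$. For the final assertion, both blocks of $\bar\rho(g_\ell)$ are nontrivial unipotent, which gives the stated Jordan form. The image of $\bar\rho(I_{\ell_0})$ is generated by the image of $g_\ell$ and is a nontrivial unipotent group in characteristic $p$; since $p\ge 5>4$, every nontrivial unipotent $4\times 4$ matrix over $\mathbb{F}$ has order exactly $p$, so this image is cyclic of order $p$.

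I expect the main obstacle to be the exclusion of a nontrivial tame prime-to-$p$ character in part \ref{item:partibartau}. If the trivial-determinant argument is not enough, I would use \ref{hyp:congruence1}: $p\nmid\ell^4-1$ rules out unipotent reduction issues and also shows that characters of order dividing $\ell^f-1$ lift uniquely. Together with the fact that $g_0$ has squarefree level and trivial character, so that its local component at $\ell_0$ is Steinberg up to an unramified twist, this pins down the residual shape.
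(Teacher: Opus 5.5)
Your proposal is correct in substance, but for part (i) it takes a different route from the paper. The paper works in characteristic zero. Carayol's local--global compatibility, applied to the twisted Steinberg local component of $g_0$ at $\ell_0$, makes $\tau_0(I_{\ell_0})$ conjugate to a group of unipotent upper-triangular matrices, so $(\tau_0(g_\ell)-I_2)^2=0$. Reducing mod $p$ gives $(\bar\tau(g_\ell)-I_2)^2=0$, and \ref{hyp:optimallevel} is used only to guarantee $\bar\tau(g_\ell)\neq I_2$.

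You instead work purely residually. You extract both tameness and $\dim V^{I_{\ell_0}}=1$ from \ref{hyp:optimallevel}, and you kill the semisimple part of inertia with the unramified determinant $\det\bar\tau=\bar\cyc^{\kappa_1-1}$. Your route avoids the automorphic input (identifying the local component and invoking Carayol). It applies to any $\bar\tau$ of conductor exponent one at $\ell_0$ whose determinant is unramified there. The paper's route gets unipotence directly from the known local component of $g_0$, so it needs no determinant argument.

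Your part (ii) is more explicit than the paper's and is fine. This covers the identity $\bar\chi_1\bar\chi_2=\bar\chi_1^c\bar\chi_2^c$ in the split case, and conjugation by $\sigma_\ell\in G_{\Q_\ell}\setminus G_{K_{\ell_0}}$ fixing unramified characters in the inert case. It also covers the order-$p$ computation for $4\times 4$ unipotents with $p\geq 5$.

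One step needs repair. The split ``prime-to-$p$ order or $p$-power order'' for the cyclic image of tame inertia is not exhaustive: a priori the image could have order $p^a m$ with $a\geq 1$ and $m>1$. Ruling out the prime-to-$p$ case does not, as written, give a $p$-group.

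The fix uses only your own ingredients. Inertia $I_{\ell_0}$ acts trivially on the line $V^{I_{\ell_0}}$. On the quotient line it acts through $\det\bar\tau\mid_{I_{\ell_0}}=1$. Hence, in a basis adapted to $V^{I_{\ell_0}}$, $\bar\tau(I_{\ell_0})$ lies in $\begin{pmatrix}1&*\\0&1\end{pmatrix}$, and it is nontrivial because $V^{I_{\ell_0}}\neq V$. This also directly supplies the $G_{K_{\ell_0}}$-stable line needed for \eqref{eq:basischangebartau}, since $I_{\ell_0}$ is normal in $G_{K_{\ell_0}}$. From there your Frobenius computation giving $\bar\chi_1/\bar\chi_2=\bar\cyc$ goes through unchanged.
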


\begin{proof}
Recall that $\tau_0:G_{K,S} \rightarrow \Gl_2(\overline{\Z}_p)$ is the two-dimensional $p$-adic Galois representation associated to the Hilbert cuspidal eigenform $g_0$. Using the local-global compatibility results of Carayol  \cite{MR870690} for twisted Steinberg representations, one obtains that the restriction $\tau_0\mid_{G_{K_{\ell_0}}}$ is ramified and has the following shape for all  $h \in I_{\ell_0}$:
\begin{align*}
\tau_0(h) = C \begin{pmatrix} 1& * \\ 0 & 1 \end{pmatrix} C^{-1}, \text{ for some $C$ in $\Gl_2(\overline{\Q}_p)$}.
\end{align*}

These observations allow us to conclude that the image of the restriction $\tau_0\mid_{I_{{\ell_0}}}$ is pro-$p$ and hence isomorphic to $\Z_p$. Hypothesis \ref{hyp:squarefree} tells us that $I_{\ell_0} \cong I_{\ell}$. These observations allow us to conclude the following equality of matrices over $M_2(\overline{\Z}_p)$:
\begin{align}
(\tau_0(g_\ell)-I_2)^2 = 0.
\end{align}
Considering their reductions in $\overline{\mathbb{F}}_p$ and using hypothesis \ref{hyp:optimallevel}, we get that
\begin{align}
\bar\tau(g_\ell)-I_2 \neq 0, \qquad (\bar\tau(g_\ell)-I_2)^2 = 0.
\end{align}
The relation between tame inertia and Frobenius at $\ell_0$ now provides us the desired unramified characters  $\bar\chi_1,\bar\chi_2:G_{K_{\ell_0}} \rightarrow \overline{\mathbb{F}_p}^\times$ for  $\bar\tau \mid_{G_{K_{\ell_0}}}$. We also note that the product $\bar\chi_1\bar\chi_2$ equals a power of the mod-$p$ cyclotomic character $\bar\epsilon^{\kappa_1-1}$. We note that  $\bar\tau^c \mid_{G_{K_{\ell_0}}}$ is isomorphic to $\bar\tau \mid_{G_{K_{\ell^c_0}}}$, where $\ell^c_0$ is the prime in $K$ lying over $\ell$ that is conjugate to $\ell_0$. If the prime $\ell$ of $\Q$ lying below $\ell_0$ is split in $K$, then the above conclusion follows by definition. Otherwise, we note that if $\ell$ is a prime of $\Q$ that remains inert in $K$, the claimed isomorphism utilizes the unipotent shape of $\bar\tau \mid_{I_{\ell_0}}$ along with the relation between tame inertia and Frobenius at $\ell$. The conclusion of part \ref{item:partibartau} follows for $\bar\tau^c \mid_{G_{K_{\ell_0}}}$ since $\ell^c_0$ also satisfies the same hypotheses as $\ell_0$.  This proves part \ref{item:partibartau}. 

The construction of $\bar\rho$ given above provides us the following isomorphism:
\begin{align} \label{eq:decompositionlocaldividingM}
\bar\rho \mid_{G_{K_{\ell_0}}} \cong \bar\tau \mid_{G_{K_{\ell_0}}} \oplus \bar\tau^c \mid_{G_{K_{\ell_0}}}. 
\end{align} 
Combining all our observations above and the isomorphism in equation (\ref{eq:decompositionlocaldividingM}) allows us to deduce the conclusion in part \ref{item:partiibartau}. Note that if $\ell$ is a prime of $\Q$ that remains inert in $K$, we have used the isomorphism $\bar\tau \mid_{G_{K_{\ell_0}}} \cong \bar\tau^c \mid_{G_{K_{\ell_0}}}$.
\end{proof}

\begin{proposition} \label{generallemma:elldividinglevel}
Suppose $\ell$ is a prime of $\Q$ dividing $M$.  Let $R$ denote a complete local Noetherian $\mathrm{O}$-algebra with finite residue field $\mathbb{F}$.  Let $\rho:\Gal{\overline{\Q}}{\Q} \rightarrow \GSp_4(R)$ denote a lift of $\bar\rho$. 
\begin{enumerate}[label=(\roman*)]
    \item\label{item:partareducedoverO} If $R$ is a reduced ring that is finite and torsion-free over $\Z_p$, then 
    \begin{align} \label{eq:jordanblockformsquare}
    (\rho(g_\ell)-I_4) &\neq 0, \\  (\rho(g_\ell)-I_4)^2 &= 0. \notag
    \end{align}
    \item\label{item:jordancanonicalformcharequation} Suppose that the conclusion in equation (\ref{eq:jordanblockformsquare}) holds. Then, there exists a matrix $P_{\rho,\ell}$ in $\GSp_4(R)$ such that $\rho(I_{\ell})$ is a pro-cyclic group generated by $P_{\rho,\ell}\begin{pmatrix} 1 & 1 & 0 & 0 \\ 0 & 1 & 0 & 0\\0 & 0 & 1 & -1 \\ 0 & 0 & 0 & 1\end{pmatrix}P_{\rho,\ell}^{-1}$.
\end{enumerate}
\end{proposition}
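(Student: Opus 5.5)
For part \ref{item:partareducedoverO}, I would reduce to the case of a lift over a finite extension of $\Q_p$. Since $R$ is reduced, finite and torsion-free over $\Z_p$, it embeds into a finite product of finite extensions $E_i$ of $\Q_p$. It therefore suffices to prove $(\rho_i(g_\ell)-I_4)^2=0$ for each projection $\rho_i:G_\Q\to\GSp_4(\mathcal O_{E_i})$. The non-vanishing $\rho(g_\ell)-I_4\neq 0$ is immediate: by Lemma \ref{lemma:residualshapedividinglevel}\ref{item:partiibartau} we have $\bar\rho(g_\ell)\neq I_4$.

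For a single $\rho_i$, I would use Grothendieck's $\ell$-adic monodromy theorem. Since $\ell\neq p$, and $\bar\rho(I_\ell)$ is cyclic of order $p$ by Lemma \ref{lemma:residualshapedividinglevel}, the kernel of $\rho_i$ restricted to wild inertia is trivial: the image of wild inertia is a pro-$\ell$ group inside the pro-$p$ kernel of reduction together with a $p$-group, hence trivial. So $\rho_i|_{I_\ell}$ factors through tame inertia. The image of the prime-to-$p$ part of tame inertia is of order prime to $p$ and trivial residually, hence trivial. Thus $\rho_i(I_\ell)$ is topologically generated by $\rho_i(g_\ell)$. Next I would show that $\rho_i(g_\ell)$ is unipotent. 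Its eigenvalues are roots of unity of $p$-power order, and they are permuted by the Frobenius relation $\phi g_\ell\phi^{-1}=g_\ell^{\ell}$. By \ref{hyp:congruence1}, $\ell$ has order not dividing $4$ modulo $p$, so a nontrivial $p$-power root of unity $\zeta$ would have an orbit $\zeta,\zeta^\ell,\dots$ of length greater than $4$, which is impossible. Hence $\rho_i(g_\ell)=\exp(N)$ with $N$ nilpotent and $N\Frob=\ell\,\Frob N$ (up to the usual normalization).

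The main obstacle is ruling out $N^2\neq 0$, i.e.\ Jordan blocks of size at least $3$. For this I would use the residual information. Lemma \ref{lemma:residualshapedividinglevel} gives $(\bar\rho(g_\ell)-I)^2=0$ with rank $2$. One cannot simply lift this, so I would argue via Frobenius weights. If $N^2\neq0$, the Weil--Deligne representation contains a block $\chi\oplus\chi|\cdot|\oplus\chi|\cdot|^2$ (or of size $4$). Its Frobenius eigenvalues in ratios $1,\ell,\ell^2$ would reduce to the residual Frobenius eigenvalues on $\bar\rho|_{G_{\Q_\ell}}$. These are, up to the ordering from \eqref{eq:relationssplitcase} and \eqref{eq:relationsinertcase}, two pairs of ratio $\bar\ell$ (or $\overline{\ell^2}$ over $K_{\ell_0}$). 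Combined with $\ell^4\not\equiv1 \bmod p$ and the distinctness this forces, a chain of three eigenvalues in ratio $\ell$ would have to be matched with these residual pairs, which \ref{hyp:congruence1} should forbid. A cleaner alternative, which I would try first, is to compare ranks. The rank of $N$ modulo $\pi$ is at most the rank of $\overline{\rho(g_\ell)-I}$ after scaling, and $N^2$ integral reduces to something controlled by $(\bar\rho(g_\ell)-I)^2=0$. However, scaling issues make this delicate, so the eigenvalue-chain argument is the fallback.

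For part \ref{item:jordancanonicalformcharequation}, given $(\rho(g_\ell)-I)^2=0$ and $\rho(g_\ell)\neq I$ over $R$, I would set $N=\rho(g_\ell)-I$. By the symplectic condition $\rho(g_\ell)^TJ\rho(g_\ell)=J$ (the similitude is trivial on inertia), $N^TJ+JN+N^TJN=0$. The residual form from Lemma \ref{lemma:residualshapedividinglevel} shows $\bar N$ has rank $2$, so $\ker N$ and $\mathrm{im}\, N$ are free rank-$2$ direct summands with $\mathrm{im}\,N=\ker N$ by Nakayama. I would then choose $w_1,w_2$ lifting residual vectors with $Nw_1,Nw_2$ spanning the image, and use the pairing to normalize. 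The Lagrangian property of $\ker N$, which follows from the identity above, lets me build, by a Gram--Schmidt type adjustment over the local ring $R$, a symplectic basis $\{Nw_1,w_1,w_2,-Nw_2\}$ (suitably ordered and rescaled) in which $\rho(g_\ell)$ is the displayed matrix. Since inertia acts through the pro-cyclic group generated by $g_\ell$ (the tame, prime-to-$p$ part dying as above), this gives the claim.
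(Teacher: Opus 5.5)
\textbf{Gap in the unipotence step.} Your route for (i) is the paper's. You reduce to $\mathcal{O}_E$-valued lifts, show that $\rho(I_\ell)$ is pro-$p$ and generated by $\rho(g_\ell)$, prove unipotence, and then exclude Jordan blocks of size $\geq 3$ using Frobenius eigenvalue chains. The unipotence step has a genuine hole.

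Hypothesis \ref{hyp:congruence1} says $p\nmid \ell^4-1$, i.e.\ the order of $\ell$ modulo $p$ is not $1$, $2$ or $4$. It does not say that this order exceeds $4$. Suppose $\ell$ has order $3$ modulo $p$ (e.g.\ $p=13$, $\ell=3$, which satisfies \ref{hyp:congruence1}). Then a primitive $p$-th root of unity $\zeta$ has $\ell$-orbit $\{\zeta,\zeta^{\ell},\zeta^{\ell^2}\}$ of length $3\le 4$. Your counting therefore does not exclude eigenvalues $\zeta,\zeta^\ell,\zeta^{\ell^2},1$ for $\rho(g_\ell)$.

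This is exactly the case the paper treats with an extra argument. The three eigenspaces are then one-dimensional and cyclically permuted by $X=\rho(\Frob_\ell)$. Hence $X^3$ acts on their span by a scalar $c_0$, and $X^2$ has three eigenvalues there, namely the cube roots of $c_0^2$. These stay distinct modulo $p$ since $p\neq 3$. This contradicts Lemma \ref{lemma:residualshapedividinglevel}, which gives $\bar\rho(\Frob_\ell^2)$ only two distinct eigenvalues, in both the split and the inert case.

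Alternatively, use the symplectic structure. The similitude is trivial on $I_\ell$ because $p\nmid \ell-1$, so the eigenvalues of $\rho(g_\ell)$ are stable under inversion. But $\zeta^{-1}=\zeta^{\ell^j}$ with $j\in\{0,1,2\}$ is incompatible with $\ell^3\equiv 1$ for odd $p$. Some such argument has to be added.

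\textbf{Excluding $N^2\neq 0$.} Your ``fallback'' is the paper's argument, and it becomes clean and uniform in the split and inert cases once you square. A chain $a, a\ell^{-1}, a\ell^{-2}$ of eigenvalues of $X$ gives eigenvalues $a^2, a^2\ell^{-2}, a^2\ell^{-4}$ of $X^2$. These are residually distinct by \ref{hyp:congruence1}, whereas $\bar\rho(\Frob_\ell^2)$ has only two distinct eigenvalues. The rank-comparison idea cannot work on its own, since $N^2\neq 0$ with $N^2\equiv 0 \pmod{\pi}$ is not excluded a priori.

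\textbf{Part (ii).} The paper simply cites Lemma 9.2.1 of Genestier--Tilouine. In your direct construction, isotropy of $\ker N=\mathrm{im}\,N$ and skew-adjointness of $N$ reduce everything to one step. You must choose $w_1,w_2$ spanning $V/\ker N$ with $b(w_1,w_1)=b(w_2,w_2)=0$ and $b(w_1,w_2)\in R^\times$, for the symmetric form $b(x,y)=\langle Nx,y\rangle$. No rescaling achieves this unless $b$ is split. That is a residual condition, namely that $-\det\bar b$ is a square in $\mathbb{F}^\times$, and you must verify it or arrange it by enlarging $\mathbb{F}$. Your ``Gram--Schmidt'' sketch passes over this.
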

\begin{proof}
Since $R$ is reduced, to prove part \ref{item:partareducedoverO}  without loss of generality we may assume that $R$ equals a subring $\mathrm{O}'$ of a finite extension of $\Q_p$. First note that $\rho(I_\ell)$ is a pro-$p$-group since $\bar\rho(I_\ell)$ is a $p$-group by Lemma \ref{lemma:residualshapedividinglevel} along with the fact that $\ker\left(\Gl_4(\mathrm{O}') \rightarrow \Gl_4(\mathbb{F}) \right)$ is pro-$p$. Therefore, $\rho \mid_{I_\ell}$ is tamely ramified and pro-cyclic generated by $\rho(g_\ell)$.
\begin{claim}\label{claim:1isonlyeigenvalue}
The only eigenvalue of $\rho(g_\ell)$ is $1$.
\end{claim}

\begin{proof}[Proof of Claim \ref{claim:1isonlyeigenvalue}]
Suppose $\alpha$ is an eigenvalue of $\rho(g_\ell)$ that is not equal to $1$. The relationship between Frobenius at $\ell$ and tame inertia tells us that $\alpha^\ell$ is also an eigenvalue of $\rho(g_\ell)$. This in turn tells us that $\alpha$ is a root of unity. From the residual shape of $\bar\rho$ provided in Lemma \ref{lemma:residualshapedividinglevel}, we obtain that $\alpha$ must be a $p$-power root of unity. Hypothesis \ref{hyp:congruence1} and the fact that we are dealing with $4 \times 4$ matrices then tells us that the only possibility is $\alpha^{\ell^3-1}=1$. Let $V_{\alpha}, V_{\alpha^\ell}, V_{\alpha^{\ell^2}}$ denote the eigenspaces for the matrix $\rho(g_{\ell})$ over $\overline{\Q}_p$ for the eigenvalues $\alpha$, $\alpha^\ell$ and $\alpha^{\ell^2}$ respectively. We note that $\Frob_{\ell}$ permutes each of the eigenspaces $V_{\alpha}, V_{\alpha^\ell}, V_{\alpha^{\ell^2}}$ and leaves their direct sum $V_{\alpha} \oplus V_{\alpha^\ell} \oplus V_{\alpha^{\ell^2}}$ invariant inside the $\overline{\Q}_p$-representation afforded by $\rho$. Therefore they must all be of dimension $1$. We also note that the action of $\Frob^3_\ell$ on the direct sum is via multiplication-by-a-scalar, say $c_0$. This observation allows us to conclude that the action of (any lift of) $\Frob^2_\ell$ on the $\overline{\Q}_p$-representation space afforded by $\rho$ has three residually distinct eigenvalues (corresponding to the cube-roots of $c_0^2$). However, Lemma \ref{lemma:residualshapedividinglevel} tells us that (any lift of) $\Frob_\ell^2$ has only two distinct eigenvalues for its action on the $\overline{\mathbb{F}}_p$ representation afforded by $\bar\rho$. This contradicts the existence of an eigenvalue $\alpha$ of $\rho(g_\ell)$ that is not equal to $1$. This concludes the proof of Claim \ref{claim:1isonlyeigenvalue}.
\end{proof}
Let $X$ denote $\rho(\Frob_\ell)$, for some lift $\Frob_\ell$ of Frobenius at $\ell$. Since $\bar\rho(g_\ell)$ is not identity by Lemma \ref{lemma:residualshapedividinglevel}, to complete the proof of part \ref{item:partareducedoverO}, we need to rule out the following Jordan canonical forms for $\rho(g_\ell)$: 
\begin{align*}
\begin{pmatrix} 1 & 1 & 0 & 0 \\ 0 & 1 & 1 & 0 \\ 0 & 0 & 1 & 1 \\ 0 & 0 & 0 & 1    \end{pmatrix}, \qquad \text{or} \qquad   \begin{pmatrix} 1 & 1 & 0 & 0 \\ 0 & 1 & 1 & 0 \\ 0 & 0 & 1 & 0 \\ 0 & 0 & 0 & 1    \end{pmatrix}.
\end{align*}
Let $B$ denote the Jordan canonical form of $\rho(g_\ell)$. One can solve for a general formula for $X$ so that the relation $XB = B^\ell X$ holds. Let $a,b,c,d$ denote the eigenvalues of $X$ in $\overline{\Q}_p$. In the above cases, the general formula yields (after suitable reordering) that
\begin{align*}
    a/b=b/c=c/d=\ell, \qquad \text{or} \qquad    a/b=b/c=\ell. 
\end{align*}
Therefore, residually one obtains
\begin{align*}
    \overline{a/b}^2=\overline{b/c}^2=\overline{c/d}^2=\overline{\ell}^2, \qquad \text{or} \qquad    \overline{a/b}^2=\overline{b/c}^2=\overline{\ell}^2. 
\end{align*}
In both of these cases, one would obtain that $X^2$ has at least three distinct residual eigenvalues. However, this is prohibited by Lemma \ref{lemma:residualshapedividinglevel}. Therefore, $(\rho(g_\ell)-I_4) \neq 0$ and $(\rho(g_\ell)-I_4)^2 = 0$. This completes the proof of part \ref{item:partareducedoverO}. Part \ref{item:jordancanonicalformcharequation}  follows from \cite[Lemma 9.2.1]{MR2234862}.
\end{proof}

\begin{lemma}\label{lemma:ramifieddiscshape} Suppose $\ell$ is a prime of $\Q$ dividing $\Delta_K$. Let $\ell_0$ denote the unique prime of $K$ lying above $\ell$. 
\begin{enumerate}
\item\label{item:part1ramifieddisc}  $\bar\tau \mid_{G_{K_{\ell_0}}}$ is unramified. Furthermore, there exist unramified characters $\bar\chi_1,\bar\chi_2:G_{K_{\ell_0}} \rightarrow \overline{\mathbb{F}}_p^\times$, such that we have an isomorphism of $\overline{\mathbb{F}}_p$-representations:
\begin{align} \label{eq:basischangebartaudeltak}
\bar\tau \simeq \begin{pmatrix} \bar\chi_1 & \star \\ 0 & \bar\chi_2 \end{pmatrix}.
\end{align}
Here, the ratio $\dfrac{\bar\chi_1}{\bar\chi_2}$ is not equal to the mod $p$-cyclotomic character or its inverse. 
\item\label{item:barrhoshaperamifieddisc} $\bar\rho \mid_{G_{K_{\ell_0}}}$ is unramified. Furthermore, there exists an isomorphism of $\overline{\mathbb{F}}_p$ representations:
\begin{align*}
\bar\rho \mid_{G_{K_{\ell_0}}} \cong \begin{pmatrix}
    \bar\chi_1 & \star & 0 & 0 \\ 0 & \bar\chi_2 & 0 & 0  \\ 0 & 0 & \bar\chi_1 & \star \\ 0 & 0 & 0 &  \bar\chi_2 
\end{pmatrix}.
\end{align*}
\item\label{item:barrhocyclicorder2} $\mathrm{Image}(\bar\rho({I_{\ell}}))$ is a cyclic group of order $2$ in $\Gl_4(\overline{\mathbb{F}}_p)$ with eigenvalues of the non-trivial element equalling $\{1,1,-1,-1\}$.

\end{enumerate}
\end{lemma}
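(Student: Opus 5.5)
We work with \(\ell \mid \Delta_K\) and \(\ell_0\) the unique prime of \(K\) above \(\ell\). By \ref{hyp:squarefree} we have \((M,\Delta_K)=1\), so \(\ell_0\) does not divide \(\mathfrak{M}\). Hypothesis \ref{hyp:optimallevel} then says the tame Artin conductor of \(\bar\tau\) is prime to \(\ell_0\); since \(\ell\neq p\), this means \(\bar\tau\mid_{G_{K_{\ell_0}}}\) is unramified. Equivalently, \(g_0\) is unramified at \(\ell_0\), so \(\tau_0\) is unramified there too, with Frobenius characteristic polynomial \(X^2-a_{\lambda_\ell}(g_0)X+\ell^{\kappa_1-1}\). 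Here the norm of \(\ell_0\) is \(\ell\), and the determinant \(\cyc^{\kappa_1-1}\) evaluated at \(\Frob_{\ell_0}\) gives \(\ell^{\kappa_1-1}\).

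\emph{Part (1).} An unramified \(\overline{\mathbb{F}}_p\)-representation of \(G_{K_{\ell_0}}\) is determined by the action of \(\Frob_{\ell_0}\), so we triangularize \(\bar\tau(\Frob_{\ell_0})\) over \(\overline{\mathbb{F}}_p\). This produces unramified characters \(\bar\chi_1,\bar\chi_2\) whose values \(\bar\alpha,\bar\beta\) at \(\Frob_{\ell_0}\) are the reductions of the Hecke roots. Suppose the ratio equals \(\bar\cyc^{\pm1}\), i.e.\ \(\bar\alpha/\bar\beta=\bar\ell^{\pm1}\). Using \(\bar\alpha\bar\beta=\bar\ell^{\kappa_1-1}\), we get
\[\bar a^2=(\bar\alpha+\bar\beta)^2=\bar\beta^2(1+\bar\ell^{\pm1})^2=\bar\ell^{\kappa_1-2}(\bar\ell+1)^2\]
(do the two sign cases separately; they give the same identity). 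This says \(a_{\lambda_\ell}(g_0)^2-(\ell+1)^2\ell^{\kappa_1-2}\equiv 0\), contradicting \ref{hyp:congruence2}. This algebraic check is the heart of the lemma.

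\emph{Part (2).} Since \(\ell\) is ramified, \(\ell_0^c=\ell_0\). Conjugation by \(\sigma\) then preserves the decomposition group, and \(\bar\tau^c\mid_{G_{K_{\ell_0}}}\cong\bar\tau\mid_{G_{K_{\ell_0}}}\). Indeed, \(\bar\tau^c(h)=\bar\tau(\sigma^{-1}h\sigma)\), and we may choose \(\sigma\in I_{\ell_0}\subset G_{\Q_\ell}\) so that \(\sigma\) normalizes \(G_{K_{\ell_0}}\) and is trivial on its unramified quotient (\(K_{\ell_0}/\Q_\ell\) is ramified, so \(\Frob_{\ell_0}\) commutes with \(\sigma\) modulo inertia). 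Because both representations are unramified, conjugation by \(\sigma\) acts trivially on the action of \(\Frob_{\ell_0}\) up to conjugacy. Together with the decomposition \(\bar\rho\mid_{G_{K_{\ell_0}}}\cong\bar\tau\oplus\bar\tau^c\) read off from \eqref{eq:residualembeddingGL2GSP4}, this gives the stated shape.

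\emph{Part (3).} Part (2) shows \(\bar\rho\) is trivial on \(I_{\ell_0}=I_\ell\cap G_{K}\), which has index \(2\) in \(I_\ell\). By the choice of \(\sigma\) (which maps to the nontrivial element), the image \(\bar\rho(I_\ell)\) is generated by \(\bar\rho(\sigma)\). That matrix is the explicit permutation matrix in \eqref{eq:residualembeddingGL2GSP4}: it has order \(2\) and eigenvalues \(1,1,-1,-1\), since it is a product of two disjoint transpositions and \(p\) is odd. The one point needing care is that \(\bar\rho(I_\ell)\) is a well-defined image independent of the chosen basis. Here we use \ref{hyp:squarefree} to ensure \(\ell\) is tamely ramified in \(K\), so the image of \(I_\ell\) in the relevant finite Galois group is exactly of order two, as noted before the theorem.
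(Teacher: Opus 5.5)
Your proposal is correct and follows the paper's proof essentially step for step. You get unramifiedness at $\ell_0$ from the level hypotheses, triangularize $\bar\tau(\Frob_{\ell_0})$ and combine (\textsc{cong-ram}) with $\det\tau_0(\Frob_{\ell_0})=\ell^{\kappa_1-1}$ (your computation $\bar a^2=\bar\ell^{\kappa_1-2}(\bar\ell+1)^2$ is exactly what the paper leaves implicit), use that conjugation acts trivially on the unramified quotient (the paper phrases this as $K'_{\ell_0}/\Q_\ell$ being abelian), and use triviality of $\bar\rho$ on $I_{\ell_0}$ for part (3).

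One small correction in part (3). The paper's fixed $\sigma$ lies in inertia at a single chosen ramified prime, so for a general $\ell\mid\Delta_K$ your permutation-matrix description needs $\sigma$ re-chosen in $I_\ell\setminus G_K$. This is legitimate because $\bar\tau^c$, and hence $\bar\rho$, changes only up to isomorphism and the conclusion is conjugation-invariant. Alternatively, any element of $I_\ell\setminus G_K$ swaps the two summands of the induced representation and squares into $I_{\ell_0}$, which already forces eigenvalues $\{1,1,-1,-1\}$. It is this, and not tameness of $\ell$ in $K$, that makes the statement independent of choices.
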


\begin{proof}
The fact that $\bar\tau \mid_{G_{K_{\ell_0}}}$ is unramified follows from hypothesis \ref{hyp:squarefree}. The shape given in equation (\ref{eq:basischangebartaudeltak}) follows by utilizing the Jordan canonical form of $\bar\tau(\mathrm{Frob}_{\ell_0})$, for some lift of Frobenius at $\ell_0$. The fact that the ratio $\dfrac{\bar\chi_1}{\bar\chi_2}$ is not equal to the mod $p$-cyclotomic character or its inverse follows by using hypothesis \ref{hyp:congruence2} along with the equality $\det(\tau_0)(\Frob_{\ell_0})=\ell^{\kappa_1-1}$. Thus, we obtain part \ref{item:part1ramifieddisc} of the lemma. 

The extension (say $K'_{\ell_0}$) of $K_{\ell_0}$ cut out by $\bar\tau \mid_{G_{K_{\ell_0}}}$ is unramified. Since $\ell$ is ramified in the extension $K/\Q$, the extension $K'_{\ell_0}/\Q_{\ell}$ is abelian. Combining this observation along with the definition of $\bar\tau^c$ allows us to conclude that $\bar\tau \mid_{G_{K_{\ell_0}}} \cong \bar\tau^c \mid_{G_{K_{\ell_0}}}$. Thus, we obtain part \ref{item:barrhoshaperamifieddisc} of the lemma. 

Part (\ref{item:barrhocyclicorder2}) follows directly from the construction of $\bar\rho$ and the fact that $\bar\rho(I_{\ell_0})$ is trivial. 
\end{proof}

\begin{proposition}\label{generallemma:elldividingdiscK}
Suppose $\ell$ is a prime of $\Q$ dividing $\Delta_K$.  Let $R$ denote a complete local Noetherian $\mathrm{O}$-algebra with finite residue field $\mathbb{F}$.  Let $\rho:\Gal{\overline{\Q}}{\Q} \rightarrow \GSp_4(R)$ denote a lift of $\bar\rho$. 
\begin{enumerate}[label=(\roman*)]
    \item\label{item:partareducedoverOramifieddisc} Suppose $R$ is a reduced ring that is finite and torsion-free over $\Z_p$. Then, $\mathrm{Image}(\rho({I_{\ell}}))$ is a cyclic group of order $2$. Furthermore, the characteristic polynomial of the non-trivial element equals $(X-1)^2(X+1)^2$.
    \item\label{item:patchinggsp4discshape} Suppose that the conclusion of the previous part holds. Then, there exists a matrix $P_{\rho,\ell}$ in $\GSp_4(R)$ such that $\rho(I_{\ell})$ is a cyclic group generated by $P_{\rho,\ell}\begin{pmatrix} 1 & 0 & 0 & 0 \\ 0 & -1 & 0 & 0\\0 & 0 & -1 & 0 \\ 0 & 0 & 0 & 1\end{pmatrix}P_{\rho,\ell}^{-1}$.
\end{enumerate}
\end{proposition}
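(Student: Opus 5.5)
For part (i), I would argue by analogy with Proposition \ref{generallemma:elldividinglevel}, reducing to the case where $R=\mathrm{O}'$ is the ring of integers of a finite extension of $\Q_p$. This reduction is allowed because $R$ is reduced, finite and torsion-free over $\Z_p$, so it embeds into a finite product of such rings, and each projection is again a lift of $\bar\rho$.

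By Lemma \ref{lemma:ramifieddiscshape}(\ref{item:barrhocyclicorder2}), $\bar\rho(I_\ell)$ has order $2$, and $\bar\rho(I_{\ell_0})$ is trivial. Since $\ker(\Gl_4(\mathrm{O}')\to\Gl_4(\mathbb{F}))$ is pro-$p$, $\rho(I_{\ell_0})$ is a pro-$p$ group. Hence $\rho|_{I_\ell}$ is tamely ramified, and $\rho(I_\ell)$ is pro-cyclic, generated by $\rho(g)$ for a suitable lift $g$ of a tame generator. Its image has order $2\cdot p^a$ for some $a \le \infty$.

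The main step is to show that the pro-$p$ part $\rho(I_{\ell_0})$ is trivial, i.e.\ that $\rho(g)^2=1$. I would first show that every eigenvalue of $u:=\rho(g^2)$ equals $1$, following Claim \ref{claim:1isonlyeigenvalue}:
\begin{itemize}
\item an eigenvalue $\alpha\ne 1$ would be a $p$-power root of unity with $\alpha^{\ell}$ also an eigenvalue;
\item \ref{hyp:congruence2} gives $p\nmid \ell^4-1$, so the orbit of $\alpha$ under $\alpha\mapsto\alpha^\ell$ has size at least $5$, which is impossible among $4$ eigenvalues.
\end{itemize}
So $u$ is unipotent, and $\rho(I_{\ell_0})$ is generated by $u$.

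If $u\neq 1$, the relation $XuX^{-1}=u^{\ell}$ with $X=\rho(\Frob_{\ell_0})$ forces a pair of eigenvalues $a,b$ of $X$ with $a/b=\ell^{2}$ (more precisely, $N(\ell_0)=\ell$, so the ratio is $\ell$ for $\Frob_{\ell_0}$; I would track this carefully). Residually, Lemma \ref{lemma:ramifieddiscshape}(\ref{item:barrhoshaperamifieddisc}) says the eigenvalues of $\bar\rho(\Frob_{\ell_0})$ are $\bar\chi_1,\bar\chi_2$, each with multiplicity two. The possible ratios are therefore $1$ and $(\bar\chi_1/\bar\chi_2)^{\pm1}$, and neither equals $\bar\ell^{\pm1}$. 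For the second, this is exactly the non-cyclotomic condition in Lemma \ref{lemma:ramifieddiscshape}(\ref{item:part1ramifieddisc}); for the first, it is $p\nmid \ell-1$, again from \ref{hyp:congruence2}. Hence $u=1$, $\rho(I_\ell)$ has order $2$, and $\rho(g)$ is a semisimple involution lifting $\bar\rho(g)$. Since $p\ne2$, its eigenvalues $\pm1$ are the Teichmüller lifts of the residual ones, which gives characteristic polynomial $(X-1)^2(X+1)^2$.

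For part (ii), with $R$ a general complete local ring, I would set $A=\rho(g)$ with $A^2=1$ and $A\in\GSp_4(R)$. Its multiplier satisfies $\lambda(A)^2=1$, and $\lambda(A)$ reduces to $1$ because $\bar\rho(\sigma)$ lies in $\ker\bar\lambda$; hence $\lambda(A)=1$. Since $2\in R^\times$, the idempotents $(1\pm A)/2$ split $R^4=V_+\oplus V_-$. Each summand is free of rank $2$, by lifting the residual ranks from part (i), and each is nondegenerate and orthogonal to the other for the symplectic form, because $\langle Av,Aw\rangle=\langle v,w\rangle$. I would then choose a symplectic basis $e_1,e_4$ of $V_+$ and $e_2,e_3$ of $V_-$, which is possible over a local ring. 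This produces $P_{\rho,\ell}\in\GSp_4(R)$ conjugating $A$ to $\mathrm{diag}(1,-1,-1,1)$. I expect the eigenvalue-ratio step to be the main obstacle, mainly in getting the exact power of $\ell$ right.
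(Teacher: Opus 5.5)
\textbf{Verdict: there is a genuine gap.} Two parts of your proposal are correct. Your part (ii) works. So does your removal of a nontrivial unipotent $u$: $\mathrm{Ad}(X)$ has eigenvalue $\ell$ on $\log u$, and you compare this with the residual ratios $\{1,(\bar\chi_1/\bar\chi_2)^{\pm1}\}(\Frob_{\ell_0})$. The ratio is indeed $\ell$, since $\ell_0$ has residue field $\mathbb{F}_\ell$. This is cleaner than the paper's case-by-case Jordan form analysis.

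\textbf{Where it fails.} The gap is in the step that precedes it. You claim that $p\nmid \ell^4-1$ forces the orbit of $\alpha$ under $\alpha\mapsto\alpha^\ell$ to have size at least $5$, and this is false. For $\alpha$ a primitive $p^k$-th root of unity, the orbit size is the order of $\ell$ in $(\Z/p^k\Z)^\times$. The condition $p\nmid\ell^4-1$ excludes only the orders $1,2,4$. Order $3$ is not excluded; it occurs whenever $p\mid \ell^2+\ell+1$, for example $\ell=2,\ p=7$ or $\ell=3,\ p=13$. The factor $\ell^4-1$ in \ref{hyp:congruence2} does not rule these out. In that case $u$ could a priori have eigenvalues $1,\alpha,\alpha^\ell,\alpha^{\ell^2}$, so it is not unipotent, and your next step never applies.

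\textbf{The missing idea.} The fix is the second half of Claim \ref{claim:1isonlyeigenvalue}, which the paper invokes here explicitly.
\begin{itemize}
\item The three eigenvalues $\alpha,\alpha^\ell,\alpha^{\ell^2}$ each have multiplicity one, and $X=\rho(\Frob_{\ell_0})$ permutes the three eigenlines cyclically.
\item Hence $X^3$ acts on their span by a single scalar $c_0$. On that span $X$ has the three cube roots of $c_0$ as eigenvalues.
\item These cube roots are residually distinct because $p\neq 3$.
\item This contradicts Lemma \ref{lemma:ramifieddiscshape}(\ref{item:barrhoshaperamifieddisc}), under which $\bar\rho(\Frob_{\ell_0})$ has at most two distinct eigenvalues.
\end{itemize}
This is why the paper says the deduction uses ``the fact that $\bar\rho(\Frob_{\ell_0})$ has only two distinct eigenvalues,'' and not only \ref{hyp:congruence2}.

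\textbf{A smaller point.} Your claim that $\rho|_{I_\ell}$ is tamely ramified, with $\rho(I_\ell)$ pro-cyclic and generated by a lift of a tame generator, fails for $\ell=2$. The ramified quadratic extensions of $\Q_2$ are wildly ramified, and the paper explicitly allows $2\mid\Delta_K$. This is harmless if you follow the paper: work directly with a generator $g_{\ell_0}$ of the pro-$p$ tame quotient of $I_{\ell_0}$ and show $\rho(I_{\ell_0})=1$. Then $\rho(I_\ell)$ has order $2$ whether or not $\ell=2$.
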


\begin{proof}
Since $R$ is reduced, to prove part \ref{item:partareducedoverOramifieddisc} of the proposition  without loss of generality we may assume that $R$ equals a subring $\mathrm{O}'$ of a finite extension of $\Q_p$.  Let $\ell_0$ denote the unique prime of $K$ lying above $\ell$. Let $g_{\ell_0}$ denote a lift in $I_{\ell_0}$ of the topological generator of the pro-$p$ part (which is isomorphic to $\Z_p$)  of the tame inertia group at $\ell_0$.  Just as in Claim \ref{claim:1isonlyeigenvalue} of Proposition \ref{generallemma:elldividinglevel}, we can deduce that the only eigenvalue of $\rho(g_{\ell_0})$ equals $1$. This deduction once again uses hypothesis \ref{hyp:congruence2} and the fact that $\bar\rho(\Frob_{\ell_0})$ has only two distinct eigenvalues. 

Let $X$ denote $\rho(\Frob_{\ell_0})$, for some lift of Frobenius at $\ell_0$. We need to rule out the following Jordan canonical forms for $\rho(g_{\ell_0})$: 
\begin{align*}
\begin{pmatrix} 1 & 1 & 0 & 0 \\ 0 & 1 & 1 & 0 \\ 0 & 0 & 1 & 1 \\ 0 & 0 & 0 & 1    \end{pmatrix}, \quad \text{or} \quad   \begin{pmatrix} 1 & 1 & 0 & 0 \\ 0 & 1 & 1 & 0 \\ 0 & 0 & 1 & 0 \\ 0 & 0 & 0 & 1    \end{pmatrix}, \quad \text{or} \quad \begin{pmatrix} 1 & 1 & 0 & 0 \\ 0 & 1 & 0 & 0 \\ 0 & 0 & 1 & 1 \\ 0 & 0 & 0 & 1    \end{pmatrix}, \quad \text{or} \quad \begin{pmatrix} 1 & 1 & 0 & 0 \\ 0 & 1 & 0 & 0 \\ 0 & 0 & 1 & 0\\ 0 & 0 & 0 & 1    \end{pmatrix}.
\end{align*}
Let $B$ denote the Jordan canonical form of $\rho(g_{\ell_0})$. One can solve for a general formula for $X$ so that the relation $XB = B^\ell X$ holds. In each of the cases, an explicit computation for this general formula of $X$ shows that we can find two eigenvalues (say $a,b$) of $X$ such that $a=\ell b$. Therefore, one obtains two eigenvalues of $\bar\rho(\Frob_{\ell_0})$ such that their ratio equals $\bar{\ell}$. This is prohibited by Lemma \ref{lemma:ramifieddiscshape} and hypothesis \ref{hyp:congruence2}. This allows us to conclude that $\rho(g_{\ell_0})$ equals $I_4$. Therefore, $\rho(I_\ell)$ factors through the quotient ${I_\ell/I_{\ell_0}}\cong{\Z/2\Z}$. Therefore, the eigenvalues of the non-trivial element of $\rho(I_\ell)$ equals $\pm 1$. Part \ref{item:partareducedoverOramifieddisc} now follows from Lemma \ref{lemma:ramifieddiscshape}(\ref{item:barrhocyclicorder2}). 

To deduce part \ref{item:patchinggsp4discshape}, part \ref{item:partareducedoverOramifieddisc} allows us to find an eigenbasis $\{\vec{e}_1,\vec{e}_2, \vec{e}_3, \vec{e}_4\}$ for $\rho$ so that  the non-trivial element of $\rho(I_\ell)$ -- call it $x$ -- acts by $\begin{pmatrix} 1 & 0 & 0 & 0 \\ 0 &-1 & 0 & 0 \\  0&0&-1 &0 \\ 0 & 0 & 0 & 1 \end{pmatrix}$ under this new basis. We now note that the restriction of the similitude character to $I_\ell$ is trivial, because residually, the restriction of  $\bar\lambda$ is trivial (since residually, it is a power of the cyclotomic character) and $p$ does not divide $\ell-1$. Hence, $\langle x \cdot e_i, x \cdot e_j  \rangle = \langle e_i,e_j\rangle$, for all $i,j\in \left\{1,2,3,4\right\}$. Therefore, 
\begin{align*}
\langle \vec{e}_1,\vec{e}_2\rangle = \langle \vec{e}_1,\vec{e}_3\rangle = \langle \vec{e}_2,\vec{e}_4\rangle = \langle \vec{e}_3,\vec{e}_4\rangle = 0. 
\end{align*}
Appropriately scaling the elements in $\{\vec{e}_1,\vec{e}_2, \vec{e}_3, \vec{e}_4\}$, we can have  $\langle \vec{e}_1,\vec{e}_4\rangle = \langle \vec{e}_2,\vec{e}_3\rangle = 1$. We let $P_{\rho,\ell}^{-1}$ denote the change of basis matrix. One sees that $(P_{\rho,\ell}^{T})^{-1} J P^{-1}_{\rho,\ell} = J$, thereby proving the proposition. 
\end{proof}

\section{Proving $R=\mathbb{T}$} \label{subsec:RequalsT}
\subsection{Minimal deformation problems}
We define various minimal deformation rings associated to $\bar\rho$. We follow the exposition given in Pilloni's work (\cite[Sections 5.3 and 5.4]{MR2920881}). Let $\mathcal{C}$ denote the category consisting of complete Artinian local $\mathrm{O}$-algebras with finite residue field $\mathbb{F}$. Here, $\mathrm{O}$ denotes the Witt vectors of $\mathbb{F}$.

\begin{definition}
\label{def:orddef}
    Consider the functor $\mathbf{D}: \mathcal{C} \rightarrow \mathrm{Sets}$, that sends an object $R$ of $\mathcal{C}$ to the set of isomorphism classes (over $\GSp_4$) of lifts $\rho : G_{\Q,S} \to \GSp_4(R)$ of $\bar\rho$ satisfying the following properties:
    \begin{enumerate}
        \item  There exists a matrix $P_{\rho,p} \in \GSp_4(R)$ such that 
        \[\rho(g) = P_{\rho,p} \begin{pmatrix} \chi_1 & * & * & * \\ 0 & \chi_2 & * & *\\0 & 0 & \chi_3 & * \\ 0 & 0 & 0 & \chi_4\end{pmatrix}P_{\rho,p}^{-1}, \qquad \forall \ g \in G_{\Q_p}.\]
          Here, for each $i \in \{1,2,3,4\}$, we have a character $\chi_i:G_{\Q_p} \rightarrow R^\times$. Furthermore, we impose the condition that the character $\chi_4$ is unramified. In this case, we say $\rho$ is \textit{ordinary at $p$}.
         \item If $\ell$ divides $M$, then there exists a $P_{\rho,\ell} \in \GSp_4(R)$ such that $\rho(I_{\ell})$ is the cyclic group generated by $P_{\rho,\ell}\begin{pmatrix} 1 & 1 & 0 & 0 \\ 0 & 1 & 0 & 0\\0 & 0 & 1 & -1 \\ 0 & 0 & 0 & 1\end{pmatrix}P_{\rho,\ell}^{-1}$.  In Genestier--Tilouine's work \cite{MR2234862}, these primes are classified as \textit{type $\mathrm{UN}_{2,2}$}. In Pilloni's work \cite{MR2920881}, these primes are classified as \textit{type $\epsilon_2$}.
         \item If $\ell$ ramifies in $K/\Q$, then there exists a $P_{\rho,\ell} \in \GSp_4(R)$ such that $\rho(I_{\ell})$ is the cyclic group generated by $P_{\rho,\ell}\begin{pmatrix} 1 & 0 & 0 & 0 \\ 0 & -1 & 0 & 0\\0 & 0 & -1 & 0 \\ 0 & 0 & 0 & 1\end{pmatrix}P_{\rho,\ell}^{-1}$. 
         The deformation condition is similar to the one given for primes classified as \textit{type $\mathrm{PR}_{2}$} in Genestier--Tilouine's work \cite{MR2234862}. In Pilloni's work \cite{MR2920881}, those primes are classified as \textit{type $\chi_\ell$}. The main difference between their conditions and ours is that in our setting, the $+1$ and $-1$ eigenspaces do not form isotropic subspaces.  
    \end{enumerate}

\end{definition}

\begin{remark}
Strictly speaking, if $2$ ramifies in $K/\Q$, then it is not included as a prime of \textit{type $\mathrm{PR}_{2}$} or  \textit{type $\chi_\ell$} in the above works of Genestier--Tilouine and Pilloni respectively. However, their calculations would follow through verbatim since we are working with the minimal deformation problem. Thus, we do not consider this case separately. 
\end{remark}

    It follows, from \cite[Proposition 2.1]{MR2264659}, that the functor $\mathbf{D}$ is pro-representable by a complete local Noetherian $\mathrm{O}$-algebra with finite residue field $\mathbb{F}$. We denote this universal deformation ring by $R^{\min}$ and call it the minimal ordinary deformation ring of $\bar\rho$. 

\begin{definition} \label{defn:primesTW}
    A prime number $q$ is called a \textit{Taylor-Wiles prime} if it satisfies the following conditions:
    \begin{itemize}
        \item  $q$ does not divide $M\Delta_K$,
        \item $q \equiv 1 \pmod{p}$, and
        \item $\bar\rho(\text{Frob}_q)$ has four distinct eigenvalues.
    \end{itemize}
    If $q$ is a Taylor-Wiles prime, then we fix an ordering $\bar\alpha_{1,q}, \bar\alpha_{2,q}, \bar\alpha_{3,q}, \bar\alpha_{4,q}$ of the eigenvalues of $\bar\rho(\text{Frob}_q)$ such that $\bar\alpha_{1,q}\bar\alpha_{4,q} = \bar\alpha_{2,q}\bar\alpha_{3,q}$.
    Thus, in this case, $$\bar\rho|_{G_{\Q_q}} \simeq \bar\chi_{1,q} \oplus \bar\chi_{2,q} \oplus \bar\chi_{3,q} \oplus \bar\chi_{4,q}, $$
    where, for all $1 \leq i \leq 4$, we have $\bar\chi_{i,q}$ is an unramified character with $\bar\chi_{i,q}(\text{Frob}_q) = \bar\alpha_{i,q}$. It will be possible  to fix an ordering of these characters $\bar\chi_{1,q} , \bar\chi_{2,q}, \bar\chi_{3,q},\bar\chi_{4,q}$ such that $\bar\chi_{1,q}(\text{Frob}_q) \bar\chi_{2,q}(\text{Frob}_q)$ does not belong to the set $\{\bar\chi_{1,q}(\text{Frob}_q) \bar\chi_{3,q}(\text{Frob}_q), \bar\chi_{2,q}(\text{Frob}_q) \bar\chi_{4,q}(\text{Frob}_q),\bar\chi_{3,q}(\text{Frob}_q) \bar\chi_{4,q}(\text{Frob}_q)\}$. Henceforth, for the rest of the article, we fix one such ordering.
\end{definition}
    
Henceforth, we implicitly increase our residue field so that all (distinct) roots of the reductions of the Hecke polynomials at Taylor--Wiles primes lie in $\mathbb{F}$. This amounts to changing our coefficient ring by a finite extension, which does not alter the conclusion of our results. Given a (possibly empty) finite set $Q$ of Taylor-Wiles primes, we define the following deformation problem associated to $Q$ with finite weights.

    \begin{definition}
    \label{defi:Qdef}
Let $k'_1, k'_2$ be positive integers such that $k'_i \equiv k_i \pmod{p-1}$ for $i=1,2$ and $Q$ be a finite set of Taylor-Wiles primes.
    Let $\mathbf{D}_{Q,(k'_1,k'_2)}$ be the functor from $\mathcal{C}$ to $\mathrm{Sets}$ which sends an object $R$ of $\mathcal{C}$ to the set of isomorphism classes (over $\GSp_4$) of lifts $\rho : G_{\Q,S \cup Q} \to \GSp_4(R)$ of $\bar\rho$ satisfying the following properties:
    \begin{enumerate}
  \item   There exists a matrix $P_{p} \in \GSp_4(R)$ such that 
        \[\rho(g) = P_{p} \begin{pmatrix} \chi_1 & * & * & * \\ 0 & \chi_2 & * & *\\0 & 0 & \chi_3 & * \\ 0 & 0 & 0 & \chi_4\end{pmatrix}P_{p}^{-1}, \qquad \forall \ g \in G_{\Q_p},\] such that the restriction to inertia has the following shape:
         \[\rho(g) = P_{p} \begin{pmatrix} \epsilon^{k'_1+k'_2-3} & * & * & * \\ 0 & \epsilon^{k'_1 -1}& * & *\\0 & 0 & \epsilon^{k'_2-2} & * \\ 0 & 0 & 0 & 1\end{pmatrix}P_{p}^{-1}, \qquad \forall \ g \in I_p,\]
          Here, for each $i \in \{1,2,3,4\}$, we have a character $\chi_i:G_{\Q_p} \rightarrow R^\times$.

         \item If $\ell$ divides $M$, then there exists a $P_{\ell} \in \GSp_4(R)$ such that $\rho(I_{\ell})$ is the cyclic group generated by $P_{\ell}\begin{pmatrix} 1 & 1 & 0 & 0 \\ 0 & 1 & 0 & 0\\0 & 0 & 1 & -1 \\ 0 & 0 & 0 & 1\end{pmatrix}P_{\ell}^{-1}$.
         \item If $\ell$ ramifies in $K$, then there exists a $P_\ell \in \GSp_4(R)$ such that $\rho(I_{\ell})$ is the cyclic group generated by $P_{\ell}\begin{pmatrix} 1 & 0 & 0 & 0 \\ 0 & -1 & 0 & 0\\0 & 0 & -1 & 0 \\ 0 & 0 & 0 & 1\end{pmatrix}P_{\ell}^{-1}$.
        
         \item If $q \in Q$, then there exists a $P_q \in \GSp_4(R)$, such that for all $g \in G_{\Q_q}$, we have  
         \[\rho|_{G_{\Q_q}}(g) = P_q \begin{pmatrix}  \chi_{1,q}(g)  & 0 & 0 & 0 \\ 0 & \chi_{2,q}(g)  &  0 & 0 \\ 0 & 0 & \chi_{3,q}(g) & 0 \\ 0 & 0 & 0 & \chi_{4,q}(g)  \end{pmatrix} P_q^{-1}.\] Here, $\chi_{i,q} : G_{\Q_q} \to R^{\times}$ is a character lifting $\bar\chi_{i,q}$ for every $1 \leq i \leq 4$. Furthermore, the $G_{\Q_q}$-characters $\chi_{1,q}$ and $\chi_{2,q}$ are unramified with $\chi_{2,q}\chi_{3,q}= \chi_{1,q}\chi_{4,q}$. In Genestier--Tilouine's work, these primes are classified as  \textit{type $\mathcal{D}_q$}. (We note that by \cite[Lemma 5.1.1]{MR2234862} this condition is equivalent to the ones considered by \cite{MR2234862} and \cite{MR2920881} for Taylor--Wiles primes.)
    \end{enumerate}
  We call an element of $\mathbf{D}_{\emptyset,(k'_1,k'_2)}(R)$ a \textit{minimal deformation of $\bar\rho$ of weight $(k'_1,k'_2)$}.
\end{definition}
  It follows, from \cite[Proposition 5.2.1]{MR2234862}, that the functor $\mathbf{D}_{Q,(k'_1,k'_2)}$ is pro-representable. We denote this universal deformation ring by $R^{\min}_{Q,(k'_1,k'_2)}$. We let  $R^{\min}_{(k'_1,k'_2)}$ denote $R^{\min}_{\emptyset,(k'_1,k'_2)}$. 
  
Note that the definition of $R^{\min}_{Q,(k'_1,k'_2)}$ depends on the ordering of eigenvalues of $\bar\rho(\text{Frob}_q)$ that we fixed for every $q \in Q$. Combining \cite[Proposition 5.6]{MR2920881} and Lemmas~\ref{lemma:TWH1}, \ref{lemma:TWH2}, \ref{lemma:TWH3} that are proved below, we obtain the existence of suitable sets of Taylor-Wiles primes.

    \begin{proposition}[Pilloni]\label{prop:pilloni}
        Let $k'_1, k'_2$ be positive integers such that $k'_i \equiv k_i \pmod{p-1}$ for $i=1,2$.
        Then for every integer $m \geq 1$, there exists a set of Taylor-Wiles primes $Q_m$ such that
        \begin{enumerate}
            \item If $q \in Q_m$, then $q \equiv 1 \pmod{p^m}$,
            \item $|Q_m| =r$ is independent of $m$,
            \item $R^{\min}_{Q_m,(k'_1,k'_2)}$ is a quotient of $\mathcal{O} \llbracket X_1,\cdots,X_r \rrbracket$.
        \end{enumerate}
    \end{proposition}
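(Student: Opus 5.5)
The plan is to reduce the statement to Pilloni's \cite[Proposition 5.6]{MR2920881}. That result produces Taylor--Wiles systems $Q_m$ with the three listed properties once his hypotheses H1, H2 and H3 hold for $\bar\rho$ and the local conditions of Definition~\ref{defi:Qdef}. So the work is to check, in our setting, exactly the inputs his Galois-cohomological argument uses. These are recorded as Lemmas~\ref{lemma:TWH1}, \ref{lemma:TWH2} and \ref{lemma:TWH3}, and I expect them to say the following.

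First, for H1 (tangent spaces and numerical coincidence), I would compute the local tangent spaces $L_v \subset H^1(G_{\Q_v},\mathrm{ad}^0\bar\rho)$, with $\mathrm{ad}^0$ the $\mathfrak{sp}_4$-adjoint representation, for each $v \in S$.
\begin{itemize}
\item At $p$: the ordinary, fixed-weight condition together with \ref{lab:pdist} gives $\dim L_p - \dim H^0(G_{\Q_p},\mathrm{ad}^0\bar\rho) = \dim \mathfrak{n}$. Here $\mathfrak{n}$ is the unipotent radical of the Borel subalgebra, of dimension $4$.
\item At $\ell \mid M$ and at $\ell \mid \Delta_K$: the minimal conditions give $\dim L_\ell = \dim H^0(G_{\Q_\ell},\mathrm{ad}^0\bar\rho)$. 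This uses that inertia acts through the fixed unipotent element, respectively the order-two element, so lifts are unramified twists in the centralizer. Lemmas~\ref{lemma:residualshapedividinglevel} and \ref{lemma:ramifieddiscshape} give the residual shapes, and Propositions~\ref{generallemma:elldividinglevel} and \ref{generallemma:elldividingdiscK} show these conditions are liftable and smooth. Hypotheses \ref{hyp:congruence1} and \ref{hyp:congruence2} should kill the obstruction $H^0(G_{\Q_\ell},\mathrm{ad}^0\bar\rho(1))$.
\item At $\infty$: complex conjugation is symplectically odd, so $\dim H^0 = 4$.
\item At $q \in Q$: Definition~\ref{defi:Qdef} gives $\dim L_q - \dim H^0 = 2$, matching the two ramified characters allowed.
\end{itemize}
Summing via the Greenberg--Wiles formula gives $\dim H^1_{\mathcal{L}_Q} - \dim H^1_{\mathcal{L}_Q^\perp}(\mathrm{ad}^0\bar\rho(1)) = 2|Q|$. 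This is the expected balance $r = |Q| \cdot 2$ minus the two weight directions.

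Second, for H2 (big image), I would show from \ref{hyp:bigimage} and \ref{lab:ind} that $H^1(\Gal{\Q(\bar\rho,\zeta_p)}{\Q},\mathrm{ad}^0\bar\rho(1)) = 0$ and $H^0(G_{\Q(\zeta_p)},\mathrm{ad}^0\bar\rho)=0$. Restricting to the image of $\mathrm{SL}_2(\mathbb{F}_p)\times\mathrm{SL}_2(\mathbb{F}_p)$, which has order prime to $p$ modulo its $p$-Sylow, the module $\mathfrak{sp}_4$ decomposes as $\mathfrak{sl}_2\oplus\mathfrak{sl}_2\oplus(\mathrm{std}\otimes\mathrm{std})$. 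The outer element $\sigma$ swaps the two $\mathfrak{sl}_2$'s. Then I would use Cline--Parshall--Scott vanishing for $p\ge5$ and inflation--restriction.

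Third, for H3 (Chebotarev), I would show that every nonzero cocycle class is detected at a Taylor--Wiles prime whose Frobenius has four distinct eigenvalues satisfying the ordering condition of Definition~\ref{defn:primesTW}. For this I need an element $g$ of the image of $\bar\rho$ restricted to $G_{\Q(\zeta_{p^m})}$ that is regular semisimple in $\GSp_4$ and whose eigenvector pairing is nonzero against a given irreducible constituent. I would take $g = (\mathrm{diag}(a,a^{-1}),\mathrm{diag}(b,b^{-1})) \in \mathrm{SL}_2(\mathbb{F}_p)^2$ with $a^{\pm1},b^{\pm1}$ distinct, which is possible as $p\ge5$. Then I would check on each constituent above that $g$ has an eigenvalue-$1$ eigenvector with nonzero projection; this is a standard computation.

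I expect H3 to be the main obstacle, specifically handling the constituent $\mathrm{std}\otimes\mathrm{std}$ together with the ordering requirement $\bar\alpha_1\bar\alpha_2 \notin \{\bar\alpha_1\bar\alpha_3,\bar\alpha_2\bar\alpha_4,\bar\alpha_3\bar\alpha_4\}$. There I would choose $a,b$ with $ab\neq \pm1$ and $a \neq b^{\pm1}$ explicitly. Once H1--H3 hold, \cite[Proposition 5.6]{MR2920881} yields $Q_m$ with $q\equiv1 \bmod p^m$, with $|Q_m| = r = \dim H^1_{\mathcal{L}^\perp}(\mathrm{ad}^0\bar\rho(1))$ independent of $m$, and with $R^{\min}_{Q_m,(k'_1,k'_2)}$ generated by $r$ elements, since its relative tangent space has dimension $r$.
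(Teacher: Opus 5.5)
Your overall strategy (verify the inputs of Pilloni's Proposition~5.6 and cite it) is the paper's. However, the verification you sketch fails at the step you yourself flagged as the main obstacle.

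\textbf{The Chebotarev element.} On the constituent $V\cong\bar\tau\otimes\bar\tau^c(\det\bar\tau^{-1})$ of $\mathrm{Ad}(\bar\rho)$ (your $\mathrm{std}\otimes\mathrm{std}$), your element $g=(\mathrm{diag}(a,a^{-1}),\mathrm{diag}(b,b^{-1}))$ acts with eigenvalues $a^{\pm1}b^{\pm1}$. None of these equals $1$, precisely because you imposed $a\neq b^{\pm1}$.

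Better choices of $a,b$ cannot fix this. Take any $h\in G_{K,S}$, and let $\bar\tau(h)$, $\bar\tau^c(h)$ have eigenvalues $\alpha_1,\alpha_2$ and $\beta_1,\beta_2$. Since $\det\bar\tau$ comes from $G_{\Q,S}$, we have $\alpha_1\alpha_2=\beta_1\beta_2$. The eigenvalues of $h$ on $V$ are then $\alpha_i\beta_j/(\alpha_1\alpha_2)$. So $V$ has a nonzero $h$-fixed vector only if $\{\alpha_1,\alpha_2\}\cap\{\beta_1,\beta_2\}\neq\emptyset$, i.e.\ only if $\bar\rho(h)$ has a repeated eigenvalue.

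Hence no element of $\bar\rho(G_{K,S})$ can satisfy the hypothesis verified in Lemma~\ref{lemma:TWH2}, and you must leave the identity component. The paper takes $h_0=\mathrm{diag}(1,2,1/2,1)\,\sigma$:
\begin{itemize}
\item $\sigma$ is the image of an inertia element at a prime ramified in $K$. It lies in $\bar\rho(G_{\Q(\zeta_{p^m}),S})$ for every $m$, since $\Q(\zeta_{p^m})$ is unramified there.
\item $\mathrm{diag}(1,2,1/2,1)$ lies in $\mathrm{SL}_2(\mathbb{F}_p)^2\subset\bar\rho(G_{K(\zeta_{p^m}),S})$, by perfectness of $\mathrm{SL}_2(\mathbb{F}')^2$.
\end{itemize}
Its eigenvalues $\pm\sqrt2,\pm1/\sqrt2$ are distinct for $p\ge5$. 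It fixes $\mathrm{diag}(1,1,-1,-1)\in W$, an explicit element of $V$, and the scalars $\mathfrak{c}$.

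\textbf{The local condition at $q\in Q$.} Your tangent-space bookkeeping misreads this condition. Definition~\ref{defi:Qdef} requires $\chi_{1,q},\chi_{2,q}$ unramified and $\chi_{2,q}\chi_{3,q}=\chi_{1,q}\chi_{4,q}$. So inertia acts through $\mathrm{diag}(1,1,\chi_q,\chi_q)$: there is a single ramified character, and the similitude is \emph{not} fixed (it ramifies via $\chi_q$). In your $\mathfrak{sp}_4$ setup with fixed similitude, the same condition would force all four characters to be unramified.

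The correct module is $\mathfrak{g}=\mathfrak{gsp}_4$. This is why the paper phrases its hypotheses for $\mathrm{Ad}(\bar\rho)$ and $\mathfrak{g}$, namely $\dim H^0(G_{\Q,S},\mathfrak g)=1$ and $H^0(G_{\Q,S},\mathfrak g(1))=0$ (Lemma~\ref{lemma:TWH3}). With this module, $\dim L_q-\dim H^0(G_{\Q_q},\mathfrak g)=4-3=1$.

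So each $q$ contributes $1$, not $2$. This is what makes $|Q_m|=r$ match the number of generators in item~(3), and the cyclic groups $\Delta_q$ coming from $\mathrm{Kli}^{(0)}_q/\mathrm{Kli}^{(1)}_q$. Your count of $2|Q|$ would give $2r$ generators. That contradicts item~(3), and also your own closing claim that the relative tangent space has dimension $r$.

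\textbf{The first hypothesis.} You never check the paper's first input (Lemma~\ref{lemma:TWH1}), $\zeta_p\notin\overline{\Q}^{\ker\mathrm{Ad}^0(\bar\rho)}$, which needs its own argument. The fixed field contains $K$, and abelian quotients of $\mathrm{ProjIm}\,\bar\rho(G_{K,S})$ have exponent dividing $2$. But $[K(\zeta_p):K]=p-1\ge4$, since $p$ splits in $K$.
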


\subsection{Verifying the hypotheses for existence of Taylor--Wiles primes}

 In \cite{MR2920881}, Pilloni gives sufficient hypotheses for the existence of a suitable collection of Taylor-Wiles primes alluded to in Proposition \ref{prop:pilloni}. These hypotheses are labelled (H1), (H2) and (H3). These hypotheses are verified in Lemmas \ref{lemma:TWH1}, \ref{lemma:TWH2} and \ref{lemma:TWH3}. To state these hypotheses, we need to work with the Lie algebra $\mathfrak{g}$ of $\GSp_4(\mathbb{F})$. Note that we have a direct sum decomposition $\mathfrak{g} = \mathfrak{g}^0 \oplus \mathfrak{c}$, where $\mathfrak{c}$ is the subspace of $M_4(\mathbb{F})$ consisting of scalar matrices and $\mathfrak{g}^0$ consists of all matrices $X$ in $M_4(\mathbb{F})$ satisfying $X^TJ + JX = 0$.  Concretely, $\mathfrak{g}^0$ is the subspace of all matrices $M \in M_4(\mathbb{F})$ of the form $M = \begin{pmatrix} a_1 & b_1 & c_1 & d_1 \\ a_2 & b_2 & c_2 & c_1 \\a_3 & b_3 & -b_2 & -b_1 \\a_4 & a_3 & -a_2 & -a_1 \end{pmatrix}$. We get $11$-dimensional and $10$-dimensional $G_{\Q,S}$-representations respectively
\begin{align*}
    \mathrm{Ad}(\bar\rho):G_{\Q,S} \rightarrow \mathrm{Aut}(\mathfrak{g}) \cong \Gl_{11}(\mathbb{F}), \qquad     \mathrm{Ad}^0(\bar\rho):G_{\Q,S} \rightarrow \mathrm{Aut}(\mathfrak{g}^0) \cong \Gl_{10}(\mathbb{F}),
\end{align*}
by the conjugation via $\bar\rho$. Note that $\mathrm{Ad}(\bar\rho) \cong \mathrm{Ad}^0(\bar\rho) \oplus \mathbbm{1}$. 

Let $M^0_2(\mathbb{F})$ denote the subspace of $M_2(\mathbb{F})$ consisting of matrices with trace $0$. We also get $4$-dimensional and $3$-dimensional $G_{K,S}$-representations by the conjugation action of $\bar\tau$:
\begin{align*}
 \mathrm{Ad}(\bar\tau):G_{K,S} \rightarrow \mathrm{Aut}(M_2(\mathbb{F})) \cong \Gl_{4}(\mathbb{F}), \qquad     \mathrm{Ad}^0(\bar\tau):G_{K,S} \rightarrow \mathrm{Aut}(M^0_2(\mathbb{F})) \cong \Gl_{3}(\mathbb{F}).
\end{align*}
We define $\mathrm{Ad}(\bar\tau^c)$  and $\mathrm{Ad}^0(\bar\tau^c)$ similarly by replacing $\bar\tau$ by $\bar\tau^c$ above.

\begin{lemma}[Hypothesis H1]
\label{lemma:TWH1}
    The extension of $\Q$ fixed by $\ker(\mathrm{Ad}^0(\bar\rho))$ does not contain $\zeta_p$. 
\end{lemma}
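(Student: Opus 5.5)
Suppose for contradiction that $\zeta_p$ lies in the field $L$ fixed by $\ker(\mathrm{Ad}^0(\bar\rho))$. Then the mod $p$ cyclotomic character $\bar\epsilon$ factors through $\mathrm{Gal}(L/\Q)$, i.e.\ through $\bar\rho(G_{\Q,S})$ modulo its intersection with the centre (scalars) of $\GSp_4(\mathbb{F})$. We aim to contradict this using the large image hypothesis \ref{hyp:bigimage}. Note first that $\ker(\mathrm{Ad}^0(\bar\rho))$ is exactly the set of $g$ with $\bar\rho(g)$ scalar, since $\bar\rho$ is absolutely irreducible and $\mathfrak{g}^0$ generates $M_4(\mathbb{F})$ together with scalars. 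So $\bar\epsilon$ factors through the projective image $\mathrm{P}\bar\rho(G_{\Q,S}) = \bar\rho(G_{\Q,S})/(\text{scalars})$.

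The key step is to use the subgroup $H=\bar\rho(G_{K,S})$, which is an index at most two subgroup and which contains the image of $\mathrm{SL}_2(\mathbb{F}_p)\times \mathrm{SL}_2(\mathbb{F}_p)$ embedded as in (\ref{eq:residualembeddingGL2GSP4}). Restrict $\bar\epsilon$ to $G_{K,S}$; since $p$ splits in $K$, $K\cap\Q(\zeta_p)=\Q$ (as $p\ge 5$ is unramified in $K$ while $\Q(\zeta_p)$'s quadratic subfield is ramified at $p$), so $\bar\epsilon|_{G_{K,S}}$ still surjects onto $\mathbb{F}_p^\times$, of order $p-1\ge 4$. It factors through the image of $G_{K,S}$ in the projective group, i.e.\ through $(\bar\tau\oplus\bar\tau^c)(G_{K,S})$ modulo scalars. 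Now consider the subgroup $D$ of $(\bar\tau\oplus\bar\tau^c)(G_{K,S})$ generated by the image of $\mathrm{SL}_2(\mathbb{F}_p)\times\mathrm{SL}_2(\mathbb{F}_p)$; it is normal (as $\mathrm{SL}_2(\mathbb{F}_p)\times\mathrm{SL}_2(\mathbb{F}_p)$ is the subgroup generated by elements of order $p$ in the commutator... more safely, I would take $D$ to be the derived subgroup of the image intersected with $\mathrm{SL}_2\times\mathrm{SL}_2$ and argue it equals $\mathrm{SL}_2(\mathbb{F}_{q_1})\times \mathrm{SL}_2(\mathbb{F}_{q_2})$-type perfect group, using $p\geq5$ so $\mathrm{SL}_2(\mathbb{F}_p)$ is perfect). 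Since $\mathrm{SL}_2(\mathbb{F}_p)$ is perfect for $p\ge5$, $\bar\epsilon$ kills $D$, so $\bar\epsilon$ factors through the quotient of the image by $D\cdot(\text{scalars})$.

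The main obstacle is then to show this quotient cannot support a character of order $p-1$ equal to $\bar\epsilon$. I would use determinants: on $G_{K,S}$, $\det\bar\tau=\det\bar\tau^c=\bar\epsilon^{\kappa_1-1}$, and the similitude is $\bar\lambda=\bar\epsilon^{\kappa_1-1}$. Elements in the image lying in the normaliser of $\mathrm{SL}_2\times\mathrm{SL}_2$ modulo $D$ and scalars are controlled by the pair of determinants modulo squares (via $\mathrm{GL}_2(\mathbb{F})/\mathrm{SL}_2(\mathbb{F})\cdot\mathbb{F}^\times \cong \mathbb{F}^\times/(\mathbb{F}^\times)^2$ when the image of $\bar\tau$ contains $\mathrm{SL}_2$ of the relevant field), so the quotient by $D\cdot$scalars has exponent dividing $2$ (up to the field-of-definition issue, which I would handle by replacing $\mathbb{F}_p$ by the field generated by traces, using Dickson's classification). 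Hence $\bar\epsilon|_{G_{K,S}}$ would have order dividing $2$, contradicting $p-1\geq4$. Over $G_{\Q,S}$ the extra index two only doubles this, giving order dividing $4$; to be safe the argument is run on $G_{K,S}$ as above, which suffices since the contradiction only needs the restriction.
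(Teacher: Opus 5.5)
Your overall route is the paper's: restrict $\bar\epsilon$ to $G_{K,S}$, where it still has order $p-1\geq 4$. Then show that every abelian quotient of the projective image of $\bar\rho(G_{K,S})$ has exponent dividing $2$, using Dickson's classification, perfectness of $\mathrm{SL}_2$, and $\det\bar\tau=\det\bar\tau^c$. Restricting directly lets you skip the paper's preliminary observation $K\subset L$, which is fine.

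The gap is that the step everything rests on is announced but not proved. Your first choice of $D$, the group generated by $\mathrm{SL}_2(\mathbb{F}_p)\times\mathrm{SL}_2(\mathbb{F}_p)$, is not normal in the image when the Dickson field $\mathbb{F}'$ is larger than $\mathbb{F}_p$, because $\mathrm{PSL}_2(\mathbb{F}')$ is simple. Moreover, squares of elements of the image land in $\mathrm{SL}_2(\mathbb{F}')\times\mathrm{SL}_2(\mathbb{F}')$ modulo scalars, not in $\mathrm{SL}_2(\mathbb{F}_p)^2$. Passing to the normal closure would work, but identifying that closure is exactly the missing argument. Your fallback, that the derived subgroup is ``an $\mathrm{SL}_2(\mathbb{F}_{q_1})\times\mathrm{SL}_2(\mathbb{F}_{q_2})$-type perfect group'', is precisely what must be shown. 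Perfectness of $\mathrm{SL}_2(\mathbb{F}_p)$ does not give it: by Goursat, a subgroup of $\mathrm{SL}_2(\mathbb{F}')^2$ projecting onto both factors could a priori be the graph of an isomorphism or a fibre product over $\mathrm{PSL}_2(\mathbb{F}')$.

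The paper supplies the missing argument as follows.
\begin{enumerate}
\item Dickson's theorem (plus a group-theoretic lemma) gives $\mathbb{F}'$ with $\mathrm{SL}_2(\mathbb{F}')\subseteq\mathrm{Im}\,\bar\tau\subseteq\mathbb{F}^\times\mathrm{GL}_2(\mathbb{F}')$.
\item Since $G_{K,S}$ is normal in $G_{\Q,S}$, $\mathrm{Im}\,\bar\tau^c=\mathrm{Im}\,\bar\tau$, so one field serves both factors ($q_1=q_2$).
\item The set $\mathcal{N}_1=\{g\in\mathrm{SL}_2(\mathbb{F}') : (g,1)\in\bar\rho(G_{K,S})\}$ is normal in $\mathrm{SL}_2(\mathbb{F}')$. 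Hypothesis \ref{hyp:bigimage}, which concerns the joint image and is genuinely used here, puts $\mathrm{SL}_2(\mathbb{F}_p)$ inside $\mathcal{N}_1$. For $p\geq 5$ the only proper non-trivial normal subgroup of $\mathrm{SL}_2(\mathbb{F}')$ is $\{\pm I\}$, so $\mathcal{N}_1=\mathrm{SL}_2(\mathbb{F}')$, and likewise $\mathcal{N}_2=\mathrm{SL}_2(\mathbb{F}')$.
\end{enumerate}

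Once $\mathrm{SL}_2(\mathbb{F}')\times\mathrm{SL}_2(\mathbb{F}')\subseteq\bar\rho(G_{K,S})$ is established, replace the ``determinants modulo squares'' heuristic with a direct computation. Take $(\mu A,\nu B)$ in the image with $A,B\in\mathrm{GL}_2(\mathbb{F}')$ and $d=\mu^2\det A=\nu^2\det B$. Then $(\mu A,\nu B)^2=d\cdot(A^2/\det A,\,B^2/\det B)\in\mathbb{F}^\times\cdot(\mathrm{SL}_2(\mathbb{F}')\times\mathrm{SL}_2(\mathbb{F}'))$. The equality of the two determinants is what makes this work, since $\mathrm{PGL}_4$ only quotients by diagonal scalars. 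Perfectness of $\mathrm{SL}_2(\mathbb{F}')$ then finishes the proof.
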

\begin{proof}
Let $L'$ be the extension of $\Q$ fixed by $\ker(\mathrm{Ad}^0(\bar\rho))$. By observing that the centralizer of $\mathfrak{g}^0$ in $M_4(\mathbb{F})$ consists of the scalar matrices,  we have a natural isomorphism 
\begin{align} \label{eq:isoprojad0}
    \mathrm{ProjIm}(\bar\rho) \xrightarrow {\cong} \mathrm{Im}(\mathrm{Ad}^0(\bar\rho))
\end{align} Here, $\mathrm{ProjIm}(\bar\rho)$ denotes the image of $\bar\rho$ in $\mathrm{PGL}_4(\mathbb{F})$. Using the isomorphism in equation (\ref{eq:isoprojad0}), one sees that any element of $G_{L',S}$ is scalar under $\bar\rho$, whereas by the construction in the previous section, none of the elements of $G_{\Q,S}-G_{K,S}$ maps to a scalar under $\bar\rho$. Thus, $K \subset L'$. Thus, to prove the lemma, it suffices to prove that $K(\zeta_p) \not\subset L'$. Since $p$ splits in $K$, we have $\Gal{K(\zeta_p)}{K}$ is cyclic of order $p-1$. Thus, to prove the lemma, it suffices to show that the abelian quotients of  $\mathrm{ProjIm}(\bar\rho(G_{K,S}))$ have exponent dividing $2$. \\

The big image hypothesis along with a theorem of Dickson \cite[Theorem 3.7]{MR2172950} tells us that $\mathrm{ProjIm(\bar\tau)}$ is isomorphic to $\mathrm{PGL}_2(\mathbb{F}')$ or $\mathrm{PSL}_2(\mathbb{F}')$, for some finite field $\mathbb{F}' \subset \mathbb{F}$. A group theory lemma \cite[Lemma 3.10]{MR2172950} would further let us conclude that $\mathrm{Im}(\bar\tau) \supset \mathrm{SL}_2(\mathbb{F}')$. Furthermore, we claim that \begin{align}\label{claim:inclusionsl2timessl2}
\bar\rho(G_{K,S}) \supset \mathrm{SL}_2(\mathbb{F}') \times \mathrm{SL}_2(\mathbb{F}').
\end{align}
 To see this, recall that if $h \in G_{K,S}$, then $$\bar\rho(h) = \begin{pmatrix} a & 0 & 0 & b\\ 0 & a' & b' & 0\\ 0 & c' & d' & 0 \\ c & 0 & 0 & d\end{pmatrix} \text{ with } ad-bc = a'd'-b'c' \text{ and } \begin{pmatrix} a & b \\ c & d\end{pmatrix}, \begin{pmatrix} a' & b' \\ c' & d'\end{pmatrix} \in \mathbb{F}^\times\mathrm{GL}_2(\mathbb{F}').$$

Let $\mathcal{N}_1$ denote the set of all elements $g \in \mathrm{SL}_2(\mathbb{F}')$ such that $(g,1)$ belongs to $\mathrm{Im}\left(\bar\rho\mid_{G_{K,S}}\right) $ via the embedding above. Similarly, let $\mathcal{N}_2$ denote the set of all elements $g \in \mathrm{SL}_2(\mathbb{F}')$ such that $(1,g)$ belongs to $\mathrm{Im}\left(\bar\rho\mid_{G_{K,S}}\right) $ via the embedding above. To prove the claim in equation (\ref{claim:inclusionsl2timessl2}), it suffices to show that $\mathcal{N}_1=\mathcal{N}_2 =\mathrm{SL}_2(\mathbb{F}')$. Note that $\mathcal{N}_1$ and $\mathcal{N}_2$ are normal subgroups of $\mathrm{SL}_2(\mathbb{F}')$ since $\mathrm{SL}_2(\mathbb{F}')$ is contained in both $\mathrm{Im}(\bar\tau)$ and $\mathrm{Im}(\bar\tau^c)$. Since $p \geq 5$, our big image hypothesis \ref{hyp:bigimage} now gives us the desired claim as the only nontrivial proper normal subgroup of $\mathrm{SL}_2(\mathbb{F}')$ is $\{\pm I_2\}$. Thus, we get inclusions $\mathrm{SL}_2(\mathbb{F}') \times \mathrm{SL}_2(\mathbb{F}') \subset \bar\rho(G_{K,S}) \subset H$, where 
\[H= \left\{(\mu_1A,\mu_2B) \mid \mu_1,\mu_2 \in \mathbb{F}^\times, \ A, B \in \mathrm{GL}_2(\mathbb{F}'), \ \mu_1^2\det(A) = \mu_2^2\det(B)  \in (\mathbb{F}_p^\times)^{\kappa_1-1} \right\}. \]
 Let $\mathcal{G}$ denote the natural image of $\mathrm{SL}_2(\mathbb{F}') \times \mathrm{SL}_2(\mathbb{F}')$ inside $\mathrm{PGL}_4(\mathbb{F})$. Similarly, let $H'$ denote the natural image of $H$ inside $\mathrm{PGL}_4(\mathbb{F})$. We have natural inclusions 
   \begin{align*}
       \mathcal{G} \subset \mathrm{ProjIm}(\bar\rho(G_{K,S})) \subset H'.
   \end{align*}
   Consider an element $(\mu_1A,\mu_2B)$ in $H$. Observe that $\mu_1^2, \mu_2^2 \in \mathbb{F}'$. Consider the element $\mu_1^2\det(A)$ (say $c$) in $\mathbb{F}'$. Then, we note that $\dfrac{1}{c}(\mu_1^2A^2,\mu_2^2B^2) \in \mathrm{SL}_2(\mathbb{F}') \times \mathrm{SL}_2(\mathbb{F}')$. This observation tells us that the group $ H'/ \mathcal{G}$  has exponent $2$. Since $\mathrm{SL}_2(\mathbb{F}')$ is its own commutator, we get that any abelian quotient of $\mathrm{ProjIm}(\bar\rho(G_{K,S}))$ has exponent dividing $2$. This concludes the proof of the lemma. 
 \end{proof}

\begin{lemma}[Hypothesis H2]\label{lemma:TWH2}
For every $m$, there exists an element $\sigma_m \in G_{\Q(\zeta_{p^m}),S}$ such that $\bar\rho(\sigma_m)$ has $4$ distinct eigenvalues and each irreducible subrepresentation of $\text{Ad}(\bar\rho)|_{G_{\Q(\zeta_{p^m}),S}}$ contains an eigenvector of the action of $\sigma_m$ with eigenvalue $1$.
\end{lemma}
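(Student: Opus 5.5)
The plan is to build $\sigma_m$ from the image of $G_{\Q(\zeta_{p^m}),S}$ under $\bar\rho$, which I will first show is large. Since every prime ramified in $K$ is prime to $p$ (by \ref{hyp:squarefree}), $K \not\subset \Q(\zeta_{p^m})$. Hence $H_m := \bar\rho(G_{\Q(\zeta_{p^m}),S})$ contains elements outside $\bar\rho(G_{K,S})$, and $H_m \cap \bar\rho(G_{K,S}) = \bar\rho(G_{K(\zeta_{p^m}),S})$ has index two in $H_m$.

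I first show that $\bar\rho(G_{K(\zeta_{p^m}),S})$ still contains the group $\mathrm{SL}_2(\mathbb{F}')\times\mathrm{SL}_2(\mathbb{F}')$ of (\ref{claim:inclusionsl2timessl2}) from the proof of Lemma~\ref{lemma:TWH1}. The quotient $\bar\rho(G_{K,S})/\bar\rho(G_{K(\zeta_{p^m}),S})$ is a quotient of the abelian group $\Gal{K(\zeta_{p^m})}{K}$. Since $\mathrm{SL}_2(\mathbb{F}')$ is perfect for $p\geq 5$, the product $\mathrm{SL}_2(\mathbb{F}')\times\mathrm{SL}_2(\mathbb{F}')$ has trivial image in this quotient and so lies in $\bar\rho(G_{K(\zeta_{p^m}),S})$. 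Fix some $\sigma' \in G_{\Q(\zeta_{p^m}),S}\setminus G_{K,S}$. Then every element of the form $\bar\rho(\sigma')\cdot(A,B)$ with $(A,B)\in \mathrm{SL}_2(\mathbb{F}')\times\mathrm{SL}_2(\mathbb{F}')$ lies in $H_m$. Multiplying by a suitable $(A,B)$, I can arrange that this element is the block swap $\bar\rho(\sigma)$ of (\ref{eq:residualembeddingGL2GSP4}) times a chosen pair $(A_0,B_0)$ in $\mathrm{SL}_2(\mathbb{F}')\times\mathrm{SL}_2(\mathbb{F}')$, up to a fixed element of $\bar\rho(G_{K,S})$ that I carry along.

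An element lying inside $G_{K,S}$ will not work, which is why $\sigma_m$ must be taken outside $G_{K,S}$. Over $\mathrm{SL}_2\times\mathrm{SL}_2$, the module $\mathfrak{g}$ decomposes as $\mathfrak{sl}_2\oplus\mathfrak{sl}_2\oplus\mathfrak{c}\oplus(\mathrm{std}\otimes\mathrm{std})$. A regular element of the diagonal torus acts on $\mathrm{std}\otimes\mathrm{std}$ with eigenvalues $a^{\pm1}b^{\pm1}$, and the condition that $\bar\rho$ of the element has four distinct eigenvalues rules out the value $1$. So I take $\sigma_m$ to be a lift of an element $g = \bar\rho(\sigma)\,(A_0,B_0)$. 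Its square lies in the $G_{K}$-part and acts on the two blocks by $A_0B_0$ and $B_0A_0$ (suitably conjugated). I choose $A_0,B_0$ so that $A_0B_0$ is regular semisimple with eigenvalues $c^{\pm1}$, where $c\neq\pm1$ and $c^2\neq\pm1$. If $p=5$, the split torus is too small for this, and I would instead use an element of the non-split torus of order $p+1$ in $\mathrm{SL}_2(\mathbb{F}_p)$. Then $g$ has eigenvalues $\pm\sqrt{c},\pm\sqrt{c}^{-1}$, which are four distinct values after enlarging $\mathbb{F}$ as allowed.

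It remains to check the eigenvalue-$1$ condition on each irreducible $H_m$-subrepresentation $W$ of $\mathrm{Ad}(\bar\rho)$. Since $g$ is regular semisimple in $\GSp_4$, its centralizer in $\mathfrak{g}$ is $3$-dimensional. The subrepresentation $W$ is stable under $\mathrm{SL}_2\times\mathrm{SL}_2$ and under the block swap. Hence $W$ is a sum of the pieces $\mathfrak{c}$, $\mathfrak{sl}_2\oplus\mathfrak{sl}_2$ (swapped as a whole), and $\mathrm{std}\otimes\mathrm{std}$, or a sub-sum of these. I will compute the fixed vectors of $g$ explicitly in each piece:
\begin{itemize}
\item on $\mathfrak{c}$, the fixed vector is $I_4$;
\item on $\mathfrak{sl}_2\oplus\mathfrak{sl}_2$, the fixed vectors are the elements $(X,\,\mathrm{Ad}(B_0)X)$ with $X$ in the centralizer of $A_0B_0$;
\item on $\mathrm{std}\otimes\mathrm{std}\cong \mathrm{Hom}(\text{block}_1,\text{block}_2)$, the swap turns the condition into $\phi\mapsto B_0\phi^{t}\cdots$, and I must exhibit a nonzero fixed vector.
\end{itemize}
The last piece may split further into $H_m$-irreducibles, for instance into symmetric and alternating parts under the swap. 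This explicit computation, and in particular showing that each irreducible constituent of the $4$-dimensional piece meets the $1$-eigenspace of $g$, is the step I expect to be the main obstacle. If a constituent fails, I would adjust the choice of $(A_0,B_0)$, for example by changing the relative position of their tori, and redo the check.
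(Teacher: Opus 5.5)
Your outline matches the paper's: perfectness places $\mathrm{SL}_2(\mathbb{F}')\times\mathrm{SL}_2(\mathbb{F}')$ inside $\bar\rho(G_{K(\zeta_{p^m}),S})$, and you then take a block swap times a torus element. But the proof is not finished. The step you call the main obstacle is left open with a ``readjust if it fails'' fallback, and two earlier steps are loose.

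\emph{The swap element.} You only obtain the block swap up to an element of $\bar\rho(G_{K,S})$ that you ``carry along''. You then compute the eigenvalues of $g$ as if that factor were absent. The missing observation is that $\sigma$ was chosen in the inertia group $I_{\ell_0}$ at a prime $\ell_0$ ramified in $K$. Since $\Q(\zeta_{p^m})/\Q$ is unramified at $\ell_0$, we have $I_{\ell_0}\subset G_{\Q(\zeta_{p^m}),S}$. So the exact matrix $\bar\rho(\sigma)$ lies in $H_m$ for every $m$, and hence so does $h_0=\mathrm{diag}(1,2,1/2,1)\,\bar\rho(\sigma)$, whose eigenvalues are $\pm\sqrt2,\pm1/\sqrt2$. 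Your condition $c^2\neq\pm1$ is superfluous, since distinctness only needs $c\neq\pm1$. Thus $c=2$ works for every $p\ge5$, and no non-split torus is needed.

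\emph{The irreducible constituents.} You never pin down the irreducible $H_m$-subrepresentations. Your worry that the $4$-dimensional piece splits into symmetric and alternating parts is misplaced: that happens for the diagonal $\mathrm{SL}_2$. Here the piece is the external tensor product $\mathrm{std}\boxtimes\mathrm{std}$ of absolutely irreducible representations of the two factors of $\mathrm{SL}_2(\mathbb{F}')\times\mathrm{SL}_2(\mathbb{F}')\subset H_m$, so it is irreducible (the paper checks this by hand). The $6$-dimensional piece is irreducible because the swap exchanges two non-isomorphic copies of $\mathfrak{sl}_2$. Hence $\mathfrak{g}=W\oplus V\oplus\mathfrak{c}$ with pairwise non-isomorphic irreducible summands of dimensions $6,4,1$, and these are the only irreducible subrepresentations; no sub-sums occur.

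\emph{The fixed vector in $V$.} A fixed vector in $V$ must actually be exhibited. Knowing that the centralizer of $g$ in $\mathfrak{g}$ is $3$-dimensional does not say how it meets $W$ and $V$. The construction is short. Take $g\in H_m\setminus\bar\rho(G_{K,S})$ with similitude $\lambda$, and set $X=g-\lambda g^{-1}$. Then $X^TJ+JX=0$, and $X$ is block-antidiagonal with respect to $\langle e_1,e_4\rangle\oplus\langle e_2,e_3\rangle$, so $X\in V$. It commutes with $g$, and it is nonzero because $g^2=\lambda$ would leave only two eigenvalues. For $h_0$ one gets $h_0-h_0^{-1}\in V$ and $h_0^2-h_0^{-2}=\tfrac32\,\mathrm{diag}(1,1,-1,-1)\in W$. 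Up to scalars, these are exactly the two matrices in the paper's proof.
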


\begin{proof}
    We follow the argument given in \cite[Example 4.11]{MR4079417}.
Observe that we have a direct sum decomposition of $\mathbb{F}[G_{\Q,S}]$-modules:
    \begin{align}\label{eq:liealgdirectsum}\mathfrak{g} \cong \bigg(\mathrm{Ind}^{G_{\Q,S}}_{G_{K,S}}\mathrm{Ad}^{0}(\bar\tau) \bigg) \oplus \bigg(\bar\tau \otimes \bar\tau^c(\det(\bar\tau)^{-1})\bigg) \oplus \mathbb{F}.\end{align}

The big image hypothesis \ref{hyp:bigimage} lets us conclude that $\mathrm{Ad}^{0}(\bar\tau)$ and $\mathrm{Ad}^{0}(\bar\tau^c)$ are non-isomorphic absolutely irreducible $G_{K,S}$-representations. Since neither of them are stable under the action of $\Gal{K}{\Q}$, the induction $\mathrm{Ind}^{G_{\Q,S}}_{G_{K,S}}\mathrm{Ad}^{0}(\bar\tau)$ is an irreducible $G_{\Q,S}$-representation. 

The construction of $\bar\rho$ in \S \ref{sec:galoisrep} gives us an order two element $\sigma := \begin{pmatrix} 0 & 1 & 0 & 0\\ 1 & 0 & 0 & 0\\ 0 & 0 & 0 & 1 \\ 0 & 0 & 1 & 0 \end{pmatrix}$ in the image of the inertia group $I_{\ell_0}$ (where the rational prime $\ell_0$ was chosen to be ramified in $K/\Q$). Since $\Q(\zeta_{p^m})/\Q$ is unramified at $\ell_0$, note that $\bar\rho({G_{\Q(\zeta_{p^m}),S}})$ contains the order two element $\sigma$ for all $m \geq 0$.

Note that $\Gal{\Q(\zeta_{p^m})}{\Q}$ is a cyclic abelian group of order $p^{m-1}(p-1)$. Therefore, the quotient of the group $\bar\rho({G_{K,S}})$ by $\bar\rho({G_{K(\zeta_{p^m}),S}})$ is abelian. Since the commutator subgroup of $\mathrm{SL}_2(\mathbb{F}') \times \mathrm{SL}_2(\mathbb{F}')$ equals itself (where the field $\mathbb{F}'$ comes in the proof of Lemma \ref{lemma:TWH1}), we see that $\mathrm{SL}_2(\mathbb{F}') \times \mathrm{SL}_2(\mathbb{F}')$ is contained in $\bar\rho\left({G_{K(\zeta_{p^m}),S}}\right)$ for all $m \geq 0$.

The observations in the above three paragraphs let us conclude that $\mathrm{Ind}^{G_{\Q,S}}_{G_{K,S}}\mathrm{Ad}^{0}(\bar\tau)$ is an irreducible $G_{\Q(\zeta_{p^m}),S}$-representation. Furthermore, an explicit computation using the fact that $\mathrm{SL}_2(\mathbb{F}') \times \mathrm{SL}_2(\mathbb{F}')$ is a subgroup of $\bar\rho({G_{K(\zeta_{p^m}),S}})$ would tell us that $\bigg(\bar\tau \otimes \bar\tau^c(\det(\bar\tau)^{-1})\bigg)$ is an irreducible $G_{K(\zeta_{p^m}),S}$-representation. 
Indeed, fix a basis $\{w_1,w_2\}$ of $\bar\tau^c \otimes (\det(\bar\tau)^{-1})$ and note that every element of $\bar\tau \otimes \bar\tau^c(\det(\bar\tau)^{-1})$ can be written as $v_1 \otimes w_1 + v_2 \otimes w_2$ with $v_1, v_2 \in \bar\tau$. Let $g_1, g_2 \in G_{K(\zeta_{p^m}),S}$ be such that \[\bar\tau(g_1) =\bar\tau(g_2) = \begin{pmatrix} 1 & 0 \\ 0 & 1\end{pmatrix} \quad \bar\tau^c(g_1) = \begin{pmatrix} 1 & 0 \\ 1 & 1\end{pmatrix}, \bar\tau^c(g_2) = \begin{pmatrix} 0 & 1 \\ -1 & 0\end{pmatrix}.\]
Suppose $\mathcal{V}$ is a $G_{K(\zeta_{p^m}),S}$-subrepresentation of $\bar\tau \otimes \bar\tau^c(\det(\bar\tau)^{-1})$. Let $v := v_1 \otimes w_1 + v_2 \otimes w_2 \in \mathcal{V}$ be a non-zero element. After replacing $v$ by $v+g_2.v$ if necessary, we can assume $v_1 \neq 0$ and $v_2 \neq 0$. Thus, $g_1.v-v=v_1 \otimes w_2 \in \mathcal{V}$. Thus, \[S =\{g.(v_1 \otimes w_2) \mid g \in G_{K(\zeta_{p^m}),S} \text{ with } \bar\tau^c(\det(\bar\tau)^{-1})(g) = \text{Id}\} =\{v' \otimes w_2 \mid v' \in \bar\tau\} \subset \mathcal{V}.\]
Now $g_2.S =\{g_2.s \mid s \in S\} = \{v' \otimes w_1 \mid v' \in \bar\tau\} \subset \mathcal{V}$. Thus it follows that $\mathcal{V}=\bar\tau \otimes \bar\tau^c(\det(\bar\tau)^{-1})$ which proves the claim.

 Using the explicit model for the Lie algebra $\mathfrak{g}$,  we may realize the direct sum decomposition of $\mathbb{F}[G_{\Q,S}]$-modules given in equation (\ref{eq:liealgdirectsum}) as follows:
    \[\mathfrak{g} \simeq W \oplus V \oplus \mathfrak{c},\] where
    \begin{itemize}
    \item $W$ is the subspace of $\mathfrak{g}$ consisting of matrices of the form $\begin{pmatrix} a_1 & 0 & 0 & d_1 \\ 0 & b_2 & c_2 & 0 \\0 & b_3 & -b_2 & 0 \\a_4 & 0 & 0 & -a_1 \end{pmatrix}$,
    \item $V$ is the subspace of $\mathfrak{g}$ consisting of matrices of the form $\begin{pmatrix} 0 & b_1 & c_1 & 0 \\ a_2 & 0 & 0 & c_1 \\a_3 & 0 & 0 & -b_1 \\0 & a_3 & -a_2 & 0 \end{pmatrix}$.
    \item $\mathfrak{c}$ is the subspace of scalar matrices. 
    \end{itemize}

     Consider 
     \begin{align*}
     h_0 := \begin{pmatrix} 0 & 1 & 0 & 0\\ 2 & 0 & 0 & 0\\ 0 & 0 & 0 & 1/2 \\ 0 & 0 & 1 & 0 \end{pmatrix} = \underbrace{\begin{pmatrix} 1 & 0 & 0 & 0\\ 0 & 2 & 0 & 0\\ 0 & 0 & 1/2 & 0\\ 0 & 0 & 0 & 1 \end{pmatrix}}_{\in \mathrm{SL}_2(\mathbb{F}') \times  \mathrm{SL}_2(\mathbb{F}')} \sigma
     \end{align*}
    Clearly $h_0$ belongs to $\bar\rho(G_{\Q(\zeta_{p^m}),S})$, for all $m \geq 0$. Let $h_0=\bar\rho(\sigma_m)$, for some element $\sigma_m \in G_{\Q(\zeta_{p^m}),S}$.  Eigenvalues of $h_0$ are $\pm\sqrt{2}$ and $\pm1/\sqrt{2}$ (and hence distinct). Now \[\begin{pmatrix} 1 & 0 & 0 & 0\\ 0 & 1 & 0 & 0\\ 0 & 0 & -1 & 0 \\ 0 & 0 & 0 & -1 \end{pmatrix} \in W \text{ and } \begin{pmatrix} 0 & 1 & 0 & 0\\ 2 & 0 & 0 & 0\\ 0 & 0 & 0 & -1 \\ 0 & 0 & -2 & 0 \end{pmatrix} \in V\] are eigenvectors for $\sigma$  with eigenvalue $1$ (for the conjugation via $h_0$ action).
   Thus, $\sigma_m$ is an element satisfying  the requirements of the lemma.
\end{proof}
\begin{lemma}[Hypothesis H3]\label{lemma:TWH3}
$\dim(H^0(G_{\Q,S}, \mathfrak{g}))=1$ and $H^0(G_{\Q,S}, \mathfrak{g}(1)) = 0$.
\end{lemma}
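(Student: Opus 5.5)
We use the decomposition of $\mathbb{F}[G_{\Q,S}]$-modules from the proof of Lemma~\ref{lemma:TWH2} (equation (\ref{eq:liealgdirectsum})):
\[
\mathfrak{g} \cong \mathrm{Ind}^{G_{\Q,S}}_{G_{K,S}}\mathrm{Ad}^0(\bar\tau) \oplus \bigl(\bar\tau\otimes\bar\tau^c(\det(\bar\tau)^{-1})\bigr) \oplus \mathbb{F}.
\]
Both invariant computations then reduce to showing that the first two summands, and their twists by $\bar\epsilon$, have no invariants. The trivial summand $\mathbb{F}$ contributes exactly a one-dimensional space to $H^0(G_{\Q,S},\mathfrak{g})$. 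Alternatively, $\dim H^0(G_{\Q,S},\mathfrak{g})=1$ follows from Schur's lemma, since by (\ref{lab:ind}) $\bar\rho$ is absolutely irreducible and the commutant of $\bar\rho$ in $M_4(\mathbb{F})$ is therefore just the scalars.

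For $H^0(G_{\Q,S},\mathfrak{g}(1))$, it suffices to restrict to the subgroup $G_{K(\zeta_p),S}$, where $\bar\epsilon$ becomes trivial. The key input, established in the proofs of Lemmas~\ref{lemma:TWH1} and \ref{lemma:TWH2}, is that $\bar\rho(G_{K(\zeta_p),S})\supset \mathrm{SL}_2(\mathbb{F}')\times\mathrm{SL}_2(\mathbb{F}')$. That proof also showed that $\bar\tau\otimes\bar\tau^c(\det\bar\tau^{-1})$ is irreducible of dimension $4$ as a $G_{K(\zeta_p),S}$-module, and that $\mathrm{Ind}\,\mathrm{Ad}^0(\bar\tau)$ restricted to $G_{K,S}$ is $\mathrm{Ad}^0(\bar\tau)\oplus\mathrm{Ad}^0(\bar\tau^c)$, each of which is irreducible and nontrivial (dimension $3$) under $\mathrm{SL}_2(\mathbb{F}')$ since $p\geq 5$. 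So the only invariants of $\mathfrak{g}$ under $G_{K(\zeta_p),S}$ come from the scalars $\mathfrak{c}$. Hence any $G_{\Q,S}$-invariant vector of $\mathfrak{g}(1)$ lies in $\mathfrak{c}(1)=\bar\epsilon$. This is an $\mathbb{F}$-line on which $G_{\Q,S}$ acts through the nontrivial character $\bar\epsilon$ (nontrivial because $p\geq 5$), so it has no invariants. Therefore $H^0(G_{\Q,S},\mathfrak{g}(1))=0$.

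The main care point is verifying that the $\mathrm{SL}_2(\mathbb{F}')\times\mathrm{SL}_2(\mathbb{F}')$ action on each nontrivial summand has no fixed vectors. For $\mathrm{Ad}^0$ this holds because the fixed vectors would be trace-zero matrices commuting with all of $\mathrm{SL}_2(\mathbb{F}')$, hence scalar, hence zero as $p\neq 2$. For the tensor summand, the vector $\begin{pmatrix}-1&0\\0&-1\end{pmatrix}\times 1$ acts by $-1$, which again forces the fixed vectors to vanish. No separate appeal to irreducibility over the full group is needed.
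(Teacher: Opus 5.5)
Your proof is correct. It reaches the key vanishing by a somewhat different mechanism than the paper.

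\textbf{The paper's route.} The paper takes the non-scalar summands $W$ and $V$ of $\mathfrak{g}=W\oplus V\oplus\mathfrak{c}$ and uses the proof of Lemma~\ref{lemma:TWH2}, where they were shown to be irreducible over $G_{\Q(\zeta_{p^m}),S}$; hence they are irreducible $G_{\Q,S}$-modules. Being irreducible of dimension greater than $1$, neither they nor their twists by $\bar\epsilon$ contain a line with trivial action. So all invariants of $\mathfrak{g}$ and $\mathfrak{g}(1)$ come from $\mathfrak{c}$ and $\mathfrak{c}(1)$.

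\textbf{Your route.} You pass to $G_{K(\zeta_p),S}$, where the twist disappears. There you kill the invariants of $W$ and $V$ directly, using only the containment $\mathrm{SL}_2(\mathbb{F}')\times\mathrm{SL}_2(\mathbb{F}')\subset\bar\rho(G_{K(\zeta_p),S})$. On $W$, a fixed vector is a pair of trace-zero $2\times 2$ matrices centralizing $\mathrm{SL}_2(\mathbb{F}')$, hence zero since $p\neq 2$. On $V$, the element $\mathrm{diag}(-1,1,1,-1)$ acts by $-1$; this is the pair $(-I_2,I_2)$, which you call a ``vector'' but should call a group element.

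\textbf{Comparison.} Your argument is more elementary and self-contained. It does not need the irreducibility of $\mathrm{Ind}^{G_{\Q,S}}_{G_{K,S}}\mathrm{Ad}^0(\bar\tau)$, which relied on $\mathrm{Ad}^0(\bar\tau)\not\cong\mathrm{Ad}^0(\bar\tau^c)$. Nor does it need the hands-on irreducibility argument for $\bar\tau\otimes\bar\tau^c(\det(\bar\tau)^{-1})$. The paper's version is shorter only because those irreducibility facts were already required for H2, which needs the full decomposition into irreducibles. Your Schur's lemma argument for $\dim H^0(G_{\Q,S},\mathfrak{g})=1$ is also a valid alternative; it uses only the absolute irreducibility of $\bar\rho$.
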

\begin{proof}
   Since the $G_{\Q,S}$-subrepresentations $W$, $V$ and $\mathfrak{c}$ of $\mathfrak{g}$ --- introduced in the proof of Lemma \ref{lemma:TWH2} --- are all irreducible representations of $G_{\Q(\zeta_{p^m}),S}$, they are irreducible representations of $G_{\Q,S}$ as well.
   Both $W$ and $V$ have dimensions greater than $1$. Hence, neither of them contain a sub-representation with dimension $1$.
      As $\mathfrak{g} = W \oplus V \oplus \mathfrak{c}$, we have
$$\dim(H^0(G_{\Q,S}, \mathfrak{g}))= \dim(H^0(G_{\Q,S}, \mathfrak{c}))=1 \text{ and } H^0(G_{\Q,S}, \mathfrak{g}(1)) = H^0(G_{\Q,S}, \mathfrak{c}(1)) =0.$$
     This proves the lemma.
\end{proof}

\subsection{Auxiliary Klingen levels}
The Klingen subgroups $\mathrm{Kli}^{(0)}(\Z_\ell)$ and $\mathrm{Kli}^{(1)}(\Z_\ell)$ consist of matrices $M$ in $\GSp_4(\Z_\ell)$ having the following congruence shapes: 
\begin{align*}
M = \left[\begin{array}{cccc} \star & \star & \star & \star \\ 0 & \star  & \star & \star  \\ 0 & \star  & \star & \star \\ 0 & 0  & 0 & \star  \end{array} \right] (\mathrm{mod} \ \ell), \qquad  M = \left[\begin{array}{cccc} 1 & \star & \star & \star \\ 0 & \star  & \star & \star  \\ 0 & \star  & \star & \star \\ 0 & 0  & 0 & \star \end{array} \right] (\mathrm{mod} \ \ell).
\end{align*}

Let $* \in \{0,1\}$. We will also consider the commutative subalgebra $\Z[U_{\ell,0}, U_{\ell,1},U_{\ell,2}]$ of the $\Z$-algebra $C_c^\infty(\GSp_4(\Q_\ell)//\mathrm{Kli}^{(*)}_\ell(\Z_\ell),\Z)$ consisting of all $\mathrm{Kli}^{(*)}_\ell$ bi-invariant continuous functions $\GSp_4(\Q_\ell) \rightarrow \overline{\Q}$ with compact support, where $U_{\ell,2},U_{\ell,1}$ and $U_{\ell,0}$ are the characteristic functions of the  double cosets:

\begin{align*}
\mathrm{Kli}^{(*)}_\ell(\Z_\ell)\left[\begin{array}{cccc}1 \\ & 1\\ & &\ell\\ & & & \ell \end{array} \right] \mathrm{Kli}^{(*)}_\ell(\Z_\ell),  \quad  \mathrm{Kli}^{(*)}_\ell(\Z_\ell)\left[\begin{array}{cccc} 1 \\ & \ell \\ & & \ell \\ & & & \ell^2 \end{array} \right] \mathrm{Kli}^{(*)}_\ell(\Z_\ell), \quad \ell\mathrm{Kli}^{(*)}_\ell(\Z_\ell).
\end{align*}

Proposition \ref{prop:pilloni} guarantees the existence of a natural number $r$, such that for each $m \geq 1$, we have a finite set $Q_m$ of Taylor--Wiles primes with cardinality $r$ and further satisfying the conditions listed there.  We consider the following levels:
\begin{align*}
\Gamma_{\Iw_p,Q_m^{(1)}} &\coloneqq  \left( \Iw_p(\Z_p) \times  \prod_{q \in Q_m} \mathrm{Kli}^{(1)}(\Z_q) \times \prod_{\ell \notin Q_m \cup \{p\}} \Gamma_0^{(2)}(M\Z_\ell) \right)\cap \GSp_4(\Q), \\ 
\Gamma_{\Iw_p,Q_m^{(0)}} &\coloneqq  \left( \Iw_p(\Z_p) \times  \prod_{q \in Q_m} \mathrm{Kli}^{(0)}(\Z_q) \times \prod_{\ell \notin Q_m \cup \{p\}} \Gamma_0^{(2)}(M\Z_\ell) \right)\cap \GSp_4(\Q), \\
\Gamma_{\Iw_p} &\coloneqq  \left( \Iw_p(\Z_p)   \times \prod_{\ell \notin \{p\}} \Gamma_0^{(2)}(M\Z_\ell) \right)\cap \GSp_4(\Q).
\end{align*}
We consider modules of $\mathrm{O}$-valued $p$-ordinary Siegel cuspforms $S^{\ord}_{(k'_1,k'_2)}(\Gamma_{\Iw_p,Q_m^{(1)}}, \mathrm{O})$ and $S^\ord_{(k'_1,k'_2)}(\Gamma_{\Iw_p,Q_m^{(0)}}, \mathrm{O})$. We consider the corresponding Hecke algebras $H^{(1)}_{Q_m,(k'_1,k'_2)}$ and $H^{(0)}_{Q_m,(k'_1,k'_2)}$ respectively generated over $\mathrm{O}$ by the Hecke operators $U_{p,2},U_{p,1}$ along with $U_{q,2},U_{q,1}$ for all primes $q$ in $Q_m$ and $T_{\ell,2},T_{\ell,1},T_{\ell,0}$
for all primes $\ell \notin Q_m \cup \{p\}$. Similarly, we can also define the module of $\mathrm{O}$-valued $p$-ordinary Siegel cuspforms $S^{\ord}_{(k'_1,k'_2)}(\Gamma_{\Iw_p}, \mathrm{O})$ and the corresponding Hecke algebra $H_{(k'_1,k'_2)}$ generated over $\mathrm{O}$ by the Hecke operators $U_{p,2},U_{p,1}$  and $T_{\ell,2},T_{\ell,1},T_{\ell,0}$
for all primes $\ell \notin \{p\}$. \\

Consider weights $k'_1 \geq k'_2 \geq 3$ such that $(k'_1,k'_2) \equiv (a_0,b_0) \pmod {(p-1)\Z^2}$. Pick a $p$-ordinary Siegel cuspidal eigenform $f'$ in $S^{\ord}_{(k'_1,k'_2)}(\Gamma_{\Iw_p}, \mathrm{O})$ lifting $\bar\rho$ (for instance, we may choose $f'$ to be the $p$-ordinary $p$-stabilization of an appropriate stable Yoshida lift). For each $q \in Q_m$, we can find roots $\alpha_q,\beta_q$ of the Hecke polynomial at $q$ for $f'$ that lift $\bar\chi_1(\Frob_q)$, $\bar\chi_2(\Frob_q)$ as in Definition \ref{defi:Qdef}. Let $f'_{\mathrm{stab}}$ be the form in $S^{\ord}_{(k'_1,k'_2)}(\Gamma_{\Iw_p,Q_m^{(1)}}, \mathrm{O})$ obtained by $\prod_{q \in Q_m} q$-stabilization of $f'$ with respect to this choice. That is, for each $q \in Q_m$, the $qU_{q,1}$ and $U_{q,2}$ eigenvalues of $f'_{\mathrm{stab}}$ are $\alpha_q\beta_q$ and $\alpha_q + \beta_q$ respectively (see \cite[Proposition 3.3]{MR2920881} for the existence of $f'_{\mathrm{stab}}$; the choice of ordering of $\bar\chi_i(\Frob_q)$ is used here). Let $\mathfrak{m}^{(1)}$ and $\mathfrak{m}^{(0)}$ denote the corresponding maximal ideals of $H^{(1)}_{Q_m,(k'_1,k'_2)}$ and $H^{(0)}_{Q_m,(k'_1,k'_2)}$ determined by  $f'_{\mathrm{stab}}$. Let $\mathfrak{m}'$ denote the maximal ideal of $H_{(k'_1,k'_2)}$ determined by $f'$. Let $\TT_{Q_m,(k'_1,k'_2)}$ and $\TT_{(k'_1,k'_2)}$ denote the localizations of $H^{(1)}_{Q_m,(k'_1,k'_2)}$ and $H_{(k'_1,k'_2)}$  with respect to $\mathfrak{m}^{(1)}$  and $\mathfrak{m}'$ respectively. We get a sequence of natural inclusions of free $\OO$-modules:
\begin{align} \label{eq:inclusionSqstab}
S^{\ord}_{(k'_1,k'_2)}(\Gamma_{\Iw_p}, \mathrm{O})_{\mathfrak{m}'} \hookrightarrow S^{\ord}_{(k'_1,k'_2)}(\Gamma_{\Iw_p,Q_m^{(0)}}, \mathrm{O})_{\mathfrak{m}^{(0)}} \hookrightarrow S^{\ord}_{(k'_1,k'_2)}(\Gamma_{\Iw_p,Q_m^{(1)}}, \mathrm{O})_{\mathfrak{m}^{(1)}}. 
\end{align}

Let 
\begin{align*}
M_{Q_m,(k'_1,k'_2)} &\coloneqq  \Hom_{\mathrm{O}}\left(S^{\ord}_{(k'_1,k'_2)}(\Gamma_{\Iw_p,Q_m^{(1)}}, \mathrm{O})_{\mathfrak{m}^{(1)}}, \mathrm{O}\right), \quad M^{(0)}_{Q_m,(k'_1,k'_2)} \coloneqq  \Hom_{\mathrm{O}}\left(S^{\ord}_{(k'_1,k'_2)}(\Gamma_{\Iw_p,Q_m^{(0)}}, \mathrm{O})_{\mathfrak{m}^{(0)}}, \mathrm{O}\right), \\  M_{(k'_1,k'_2)} &\coloneqq \Hom_{\mathrm{O}}\left(S^{\ord}_{(k'_1,k'_2)}(\Gamma_{\Iw_p}, \mathrm{O})_{\mathfrak{m}'}, \mathrm{O}\right). 
\end{align*}
For each $q \in Q_m$, let $\Delta_q$ denote the $p$-primary part of  $\mathrm{Kli}^{(0)}_q/\mathrm{Kli}^{(1)}_q \cong (\Z/q\Z)^\times$. Let $\Delta_{Q_m} = \prod_{q \in Q_m} \Delta_q$. There is a natural action of $\mathrm{O}[\Delta_{Q_m}]$ on $M_{Q_m,(k'_1,k'_2)}$. Let $I_{Q_m}$ denote the kernel of the augmentation map $\mathrm{O}[\Delta_{Q_m}] \rightarrow \mathrm{O}$. 
 The inclusions in equation (\ref{eq:inclusionSqstab}) induce natural maps on their $\mathrm{O}$-duals:
 \begin{align} \label{eq:fixedbydeltaq}
 \dfrac{M_{Q_m,(k'_1,k'_2)} }{I_{Q_m} M_{Q_m,(k'_1,k'_2)}} \rightarrow M^{(0)}_{Q_m,(k'_1,k'_2)}  \rightarrow M_{(k'_1,k'_2)}.
 \end{align}

      \begin{lemma}
        \label{lem:deflem}
      Let $Q_m$ be a finite set (possibly empty) of Taylor--Wiles primes.  Let $k'_1, k'_2$ be positive integers such that $k'_i \equiv k_i \pmod{p-1}$ for $i=1,2$ and $k'_1 \geq k'_2 \geq 3$. There exists a representation $\rho_{Q_m,(k'_1,k'_2)}~:~G_{\Q,S \cup Q_m} \to \GSp_4(\TT_{Q_m,(k'_1,k'_2)})$ lifting $\bar\rho$ such that
        \begin{align}\label{eq:traceequalitytaylorwiles}\mathrm{Trace}\left(\rho_{Q_m,(k'_1,k'_2)}(\Frob_\ell)\right) = T_{\ell,2},\quad \text{for all }\ell \notin Q_m \cup S,\end{align}
        and such that $\rho_{Q_m,(k'_1,k'_2)} \mod (\mathfrak{m}^{(1)})^n$ belongs to $\mathbf{D}_{Q_m,(k'_1,k'_2)}\left(\dfrac{\TT_{Q_m,(k'_1,k'_2)}}{(\mathfrak{m}^{(1)})^n}\right)$ for all $n \geq 1$.
       
            \end{lemma}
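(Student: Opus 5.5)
The plan is to copy the proof of Theorem \ref{thm:symplecticbasis}(\ref{hyp:symppairing}) at fixed weight and auxiliary level $Q_m$, then add the local condition at each $q\in Q_m$. The algebra $\TT_{Q_m,(k'_1,k'_2)}$ is reduced, finite and torsion-free over $\mathrm{O}$. So it embeds into $\prod_f \overline{\Q}_p$, the product running over the eigensystems of $p$-ordinary eigenforms $f$ in $S^{\ord}_{(k'_1,k'_2)}(\Gamma_{\Iw_p,Q_m^{(1)}},\mathrm{O})_{\mathfrak{m}^{(1)}}$. Each such $f$ has cohomological weight $k'_1\geq k'_2\geq 3$, so Theorem \ref{thm:GSP4weissauertaylorlaumon} attaches to it $\varrho_{f,p}$. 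Each $\varrho_{f,p}$ lifts the absolutely irreducible $\bar\rho$, so $f$ is neither CAP nor endoscopic. Their traces glue to a pseudocharacter valued in $\TT_{Q_m,(k'_1,k'_2)}$ that sends $\Frob_\ell\mapsto T_{\ell,2}$. By Nyssen--Rouquier this is the trace of a $\Gl_4(\TT_{Q_m,(k'_1,k'_2)})$-valued lift of $\bar\rho$. The similitudes glue to a character. Autoduality then gives an alternating pairing whose reduction is a scalar multiple of $J$, by Claim \ref{claim:tressymp}. This puts the lift in $\GSp_4$ after an upper-triangular change of basis, as before. This yields \eqref{eq:traceequalitytaylorwiles}.

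Next come the local conditions, which need only be checked on $\TT_{Q_m,(k'_1,k'_2)}$ itself; the quotients by $(\mathfrak{m}^{(1)})^n$ then follow. At $p$, Theorem \ref{thm:GSP4weissauertaylorlaumon}(\ref{thm:galoispoint5}) gives each $\varrho_{f,p}$ an ordinary filtration with the prescribed Hodge--Tate characters on $I_p$. The hypothesis \ref{lab:pdist} lets these glue over the product, and the filtration descends to $\TT_{Q_m,(k'_1,k'_2)}$ by the argument of \cite[Theorem 4.6(7)]{hsieh_yoshida}. At $\ell\mid M$ and at $\ell\mid\Delta_K$, Propositions \ref{generallemma:elldividinglevel} and \ref{generallemma:elldividingdiscK} apply verbatim. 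Their part (i) holds at each $\overline{\Q}_p$-point, hence on the reduced ring. Part (ii) then gives the matrices $P_\ell\in\GSp_4$.

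The main obstacle is the condition at $q\in Q_m$. By the choice of $Q_m$, $\bar\rho(\Frob_q)$ has four distinct eigenvalues, so by Hensel's lemma $\rho(\Frob_q)$ diagonalizes over $\TT_{Q_m,(k'_1,k'_2)}$ into eigenlines lifting the $\bar\chi_{i,q}$. Inertia at $q$ acts through its tame $p$-part, which commutes with $\Frob_q$ up to the power $q\equiv1$ modulo $p$. So inertia preserves these eigenlines, giving characters $\chi_{i,q}$. Since $\bar\chi_{1,q}\bar\chi_{4,q}=\bar\chi_{2,q}\bar\chi_{3,q}$ is the only residual pairing relation, the pairing forces $\chi_{1,q}\chi_{4,q}=\chi_{2,q}\chi_{3,q}$. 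Rescaling the eigenbasis makes it symplectic.

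It remains to show that $\chi_{1,q}$ and $\chi_{2,q}$ are unramified. Here I would invoke local--global compatibility for $\mathrm{Kli}^{(1)}(\Z_q)$-level forms whose $U_{q,1}$, $U_{q,2}$-eigenvalues lift $\bar\chi_{1,q}(\Frob_q)\bar\chi_{2,q}(\Frob_q)$ and $\bar\chi_{1,q}(\Frob_q)+\bar\chi_{2,q}(\Frob_q)$, as in \cite[Proposition 3.3]{MR2920881} and the Genestier--Tilouine type $\mathcal{D}_q$ analysis. Such $\varPi_q$ is a principal series whose parameter has two unramified characters attached to those eigenvalues, and I would check this pointwise on each $f$. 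The $\mathfrak{m}^{(1)}$-localization and the fixed ordering from Definition \ref{defn:primesTW} then identify which eigenlines are unramified. If a direct argument is needed, I would instead compute $\rho(I_q)$ on the $\alpha_q$, $\beta_q$-eigenlines via the Klingen-level Hecke relation, following \cite[Section 5]{MR2234862}.
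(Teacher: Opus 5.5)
Your proposal is correct and follows essentially the paper's route: rerun the proof of Theorem \ref{thm:symplecticbasis} at fixed weight and Klingen level, apply Propositions \ref{generallemma:elldividinglevel} and \ref{generallemma:elldividingdiscK} at $\ell \mid M\Delta_K$, and defer the Taylor--Wiles condition to Genestier--Tilouine and Pilloni. The only difference is that you prove the splitting $\rho|_{G_{\Q_q}}\simeq\oplus_i\chi_{i,q}$ and the relation $\chi_{1,q}\chi_{4,q}=\chi_{2,q}\chi_{3,q}$ by hand, leaving just the unramifiedness of $\chi_{1,q},\chi_{2,q}$ to local--global compatibility, for which the apt references are \cite[Lemma 5.1.1, Section 9.3]{MR2234862} and \cite[Proposition 6.5]{MR2920881} rather than the $q$-stabilization result \cite[Proposition 3.3]{MR2920881}.

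One small caution: you assert without proof that the full algebra $\TT_{Q_m,(k'_1,k'_2)}$, which contains the $U_{p,i}$ and $U_{q,i}$, is reduced. You can sidestep this by building the representation over the subalgebra generated by the $T_{\ell,i}$, which is reduced since these operators act semisimply, and then pushing forward, because none of the conditions in the lemma involve the $U$-operators.
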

        \begin{proof}

        The construction of the Galois representation $\rho_{Q_m,(k'_1,k'_2)}$ lifting $\bar\rho$ satisfying the trace equality given in equation (\ref{eq:traceequalitytaylorwiles}) and the first three properties listed in Definition \ref{defi:Qdef} of the functor $\mathbf{D}_{Q_m,(k'_1,k'_2)}$ follows from similar arguments given in the proof of Theorem \ref{thm:symplecticbasis}. The fourth property listed in Definition \ref{defi:Qdef} for Taylor--Wiles primes follows from work of Genestier--Tilouine \cite[Lemma 5.1.1, Section 9.3]{MR2234862}, see also Pilloni \cite[Proposition 6.5]{MR2920881}. In particular, if we consider a Taylor--Wiles prime $q$ in $Q_m$, then  $\rho_{Q_m,(k'_1,k'_2)}|_{G_{\Q_q}} \simeq \oplus_{i=1}^{4} \eta_{i,q}$, such that $\eta_{i,q} : G_{\Q_q} \to \TT_{Q_m,(k'_1,k'_2)}^{\times}$ is a character lifting $\bar\chi_{i,q}$ for every $1 \leq i \leq 4$, with $\eta_{1,q}$ and $\eta_{2,q}$ being unramified characters and $\eta_{2,q}\eta_{3,q}  = \eta_{1,q}\eta_{4,q}$. We also have 
        \begin{align}\label{eq:qinimageofRTdef}
        q U_{q,1} = \eta_{1,q}(\Frob_q) \eta_{2,q}(\Frob_q), \qquad U_{q,2} =  \eta_{1,q}(\Frob_q)+\eta_{2,q}(\Frob_q).
        \end{align}

  To conclude that the $\eta_{i,q}$'s take value in the ring $\TT_{Q_m,(k'_1,k'_2)}$, note that we have implicitly used the fact that our  residue field $\mathbb{F}$ is large enough so that Hensel's lemma ensures us that the coefficient ring contains all roots of Hecke polynomials of Taylor--Wiles primes. 
        \end{proof}    

Lemma \ref{lem:deflem} and equation (\ref{eq:qinimageofRTdef}) of Lemma \ref{lem:deflem} allows us to obtain surjective ring homomorphisms: 
\begin{align} \label{eq:RTTWsurj}
R^{\min}_{Q_m,(k'_1,k'_2)} \twoheadrightarrow \TT_{Q_m,(k'_1,k'_2)}, \qquad R^{\min}_{(k'_1,k'_2)} \twoheadrightarrow \TT_{(k'_1,k'_2)}. 
\end{align}
The surjective ring homomorphisms in equation (\ref{eq:RTTWsurj}) allow us to view $M_{Q_m,(k'_1,k'_2)}$ and $M_{(k'_1,k'_2)}$ as $R^{\min}_{Q_m,(k'_1,k'_2)}$ and $R^{\min}_{(k'_1,k'_2)}$ modules respectively. We have the following proposition:

\begin{proposition}[Genestier--Tilouine, Pilloni]\label{prop:TWsystemexistence}
For sufficiently large weights $k'_1 > k'_2 \gg 0$ such that $k'_i \equiv k_i \pmod{p-1}$ for $i=1,2$, the data \[\{R^{\min}_{(k'_1,k'_2)},\ \TT_{(k'_1,k'_2)}, \ M_{(k'_1,k'_2)},\ R^{\min}_{Q_m,(k'_1,k'_2)},\ \TT_{Q_m,(k'_1,k'_2)}, \ M_{Q_m,(k'_1,k'_2)}\}\] forms a Taylor--Wiles system (in the sense of \cite[Section 4.2.1.]{MR2920881}). That is, 
\begin{enumerate}
\item\label{point1} $R^{\min}_{(k'_1,k'_2)}$ is a complete local $\mathrm{O}$-algebra. $R^{\min}_{Q_m,(k'_1,k'_2)}$ is a complete local $\mathrm{O}[\Delta_{Q_m}]$-algebra with $R^{\min}_{(k'_1,k'_2)} \cong R^{\min}_{Q_m,(k'_1,k'_2)}/I_{Q_m}$.
\item\label{point2} $M_{(k'_1,k'_2)}$ is an  $R^{\min}_{(k'_1,k'_2)}$-module, that is free as an $\mathrm{O}$-module. $M_{Q_m,(k'_1,k'_2)}$ is an $R^{\min}_{Q_m,(k'_1,k'_2)}$-module. Furthermore, $M_{Q_m,(k'_1,k'_2)}$ is  a free $\mathrm{O}[\Delta_{Q_m}]$-module with finite rank that is independent of $m$.
\item\label{point3} $\TT_{(k'_1,k'_2)}$ equals the image of $R^{\min}_{(k'_1,k'_2)}$ inside $\mathrm{End}_{\mathrm{O}}(M_{(k'_1,k'_2)})$. Furthermore, $\TT_{Q_m,(k'_1,k'_2)}$ equals the image of $R^{\min}_{Q_m,(k'_1,k'_2)}$ inside $\mathrm{End}_{\mathrm{O}}(M_{Q_m,(k'_1,k'_2)})$.
\end{enumerate}
 Here, the sets $Q_m$ of Taylor--Wiles primes appear in Proposition \ref{prop:pilloni}. In addition, the following statements hold: 
\begin{enumerate}[label=(\roman*)]
\item\label{roman1} For all $m$, the image of $\TT_{Q_m,(k'_1,k'_2)}$ inside $\mathrm{End}_{\mathrm{O}}\left( \dfrac{M_{Q_m,(k'_1,k'_2)} }{I_{Q_m} M_{Q_m,(k'_1,k'_2)}}\right)$ is isomorphic to $\TT_{(k'_1,k'_2)}$.
\item\label{roman2} For all $m$, we have  $\dfrac{M_{Q_m,(k'_1,k'_2)} }{I_{Q_m} M_{Q_m,(k'_1,k'_2)}} \cong M_{(k'_1,k'_2)} $.
\end{enumerate}
\end{proposition}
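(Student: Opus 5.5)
The plan is to verify the Taylor--Wiles system axioms one at a time, following Genestier--Tilouine \cite[Sections 5, 9]{MR2234862} and Pilloni \cite[Sections 4--6]{MR2920881}. The only inputs that need genuine work in our setting are the horizontal control theorem of Pilloni \cite[Theorem 9.1]{MR3059119}, valid for very regular weights, and the local shapes established in \S\ref{sec:galoisrep}.

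For point (\ref{point1}), first equip $R^{\min}_{Q_m,(k'_1,k'_2)}$ with its $\mathrm{O}[\Delta_{Q_m}]$-algebra structure. For $q\in Q_m$, restrict the character $\chi_{3,q}$ of Definition \ref{defi:Qdef} to inertia. It factors through the tame quotient $\Z_q^\times \to \Delta_q$, because it lifts the unramified character $\bar\chi_{3,q}$ and so has pro-$p$ image on inertia. This gives $\Delta_q \to (R^{\min}_{Q_m})^\times$. Note that $\chi_{1,q},\chi_{2,q}$ are unramified and $\chi_{4,q}=\chi_{2,q}\chi_{3,q}\chi_{1,q}^{-1}$, so this single character records all the ramification at $q$. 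Killing $I_{Q_m}$ therefore forces $\rho$ to be unramified at every $q\in Q_m$. Comparing functors, this yields $R^{\min}_{Q_m}/I_{Q_m}\cong R^{\min}$. Here one uses that $\bar\rho(\Frob_q)$ has distinct eigenvalues, so an unramified lift automatically satisfies the diagonal shape condition at $q$.

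Point (\ref{point3}) is immediate from the surjections (\ref{eq:RTTWsurj}), together with the fact that the Hecke algebra acts faithfully on the cusp forms, and hence on their $\mathrm{O}$-duals. For point (\ref{point2}), $M_{(k'_1,k'_2)}$ is $\mathrm{O}$-free by construction. The key assertion is that $M_{Q_m}$ is $\mathrm{O}[\Delta_{Q_m}]$-free of rank independent of $m$, and this is where the very regular weight hypothesis enters.

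The argument is as follows. Pilloni's horizontal control theorem gives vanishing of higher coherent cohomology of the ordinary part for $k'_1>k'_2\gg0$, which is exactly where the bound $\gg_{p,M,\Delta_K}$ comes from. Consequently the spaces of ordinary cusp forms of level $\Gamma_{\Iw_p,Q_m^{(1)}}$ commute with base change and with taking $\Delta_{Q_m}$-coinvariants. Since $\Delta_{Q_m}$ acts freely on the cover of Shimura varieties from Klingen level $(0)$ to $(1)$ (the level is neat by our tame level), we get the freeness and the identification $M_{Q_m}/I_{Q_m}M_{Q_m}\cong M^{(0)}_{Q_m}$.

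It remains to show that the second map in (\ref{eq:fixedbydeltaq}), $M^{(0)}_{Q_m}\to M_{(k'_1,k'_2)}$, is an isomorphism, which gives (\ref{roman2}). Because $U_{q,1},U_{q,2}$ are pinned down by $\mathfrak m^{(0)}$ via the chosen ordering of $\bar\chi_{i,q}$, the $q$-stabilization map is an isomorphism after localization. This is Pilloni's \cite[Proposition 3.3]{MR2920881} computation of Klingen-invariants of unramified principal series with regular Satake parameter. The ranks are then independent of $m$, since they equal $\mathrm{rank}_{\mathrm{O}} M_{(k'_1,k'_2)}$. Statement (\ref{roman1}) follows from (\ref{roman2}): the image of $\TT_{Q_m}$ in $\mathrm{End}(M_{Q_m}/I_{Q_m})$ is generated by the $T_{\ell,i}$ and $U_{p,i}$ together with the $U_{q,i}$. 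The latter are expressed through $\eta_{i,q}(\Frob_q)$ by (\ref{eq:qinimageofRTdef}), and these lie in the image of $\TT_{(k'_1,k'_2)}$ via the Galois representation of Lemma \ref{lem:deflem}.

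I expect the main obstacle to be the freeness over $\mathrm{O}[\Delta_{Q_m}]$. This requires carefully importing Pilloni's vanishing and base-change results for coherent cohomology with Siegel (rather than Klingen or paramodular) tame level $\Gamma_0^{(2)}(M\Delta_K)$, and checking that the localization at $\mathfrak m^{(1)}$ is compatible with them. My first attempt would be to verify that Pilloni's arguments only use neatness and the ordinary projector at $p$, so that they carry over verbatim to our level.
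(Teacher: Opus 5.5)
Your treatment of points (1) and (3), and your appeal to Pilloni's horizontal control theorem \cite[Theorem 9.1]{MR3059119} for the first map in (\ref{eq:fixedbydeltaq}), follow the paper. The gap is in showing that the second map $M^{(0)}_{Q_m,(k'_1,k'_2)}\to M_{(k'_1,k'_2)}$ is an isomorphism, on which (ii), the $m$-independence of the rank, and (i) all rest.

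The computation of $\mathrm{Kli}^{(0)}(\Z_q)$-invariants in an unramified principal series with regular Satake parameter only handles automorphic representations that are unramified at $q$. A priori, $S^{\ord}_{(k'_1,k'_2)}(\Gamma_{\Iw_p,Q_m^{(0)}},\mathrm{O})_{\mathfrak{m}^{(0)}}$ may also contain forms whose component at $q$ is a non-spherical Iwahori-spherical representation with Klingen-parahoric fixed vectors. Such forms have no counterpart at level $\Gamma_{\Iw_p}$ and would make $M^{(0)}_{Q_m,(k'_1,k'_2)}$ too large.

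The missing idea is the level-lowering statement that (\ref{eq:levellowering}) is an isomorphism \cite[Proposition 6.2]{MR2920881}. Such a representation has two Frobenius eigenvalues with ratio $q^{\pm1}$ (equivalently, nonzero monodromy). Since $q\equiv 1\pmod p$, these eigenvalues are residually equal, contradicting that $\bar\rho(\Frob_q)$ has four distinct eigenvalues.

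Even with this you only get a rational isomorphism. The paper obtains the integral one by also proving (\ref{eq:inversetoqstab}) with $\mathrm{Frac}(\mathrm{O})/\mathrm{O}$-coefficients, via an explicit inverse to $q$-stabilization (\cite[Proposition 6.3]{MR2920881}, \cite[Corollary 3.2.4]{MR2234862}). That is where the chosen ordering of the $\bar\chi_{i,q}$ is really used. Your sentence ``the $q$-stabilization map is an isomorphism after localization'' conflates these two steps and justifies neither.

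There are two further omissions.

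First, in a Taylor--Wiles system the $\mathrm{O}[\Delta_{Q_m}]$-structure on $M_{Q_m,(k'_1,k'_2)}$ is the one coming through $R^{\min}_{Q_m,(k'_1,k'_2)}$ and (\ref{eq:RTTWsurj}), i.e.\ through the inertial character you build in point (1). Pilloni's freeness, by contrast, concerns the diamond action of $\mathrm{Kli}^{(0)}_q/\mathrm{Kli}^{(1)}_q$. You must show these two actions agree. This is a local--global compatibility statement at $q$ (\cite[Theorems 2.2.5, 8.2.1]{MR2234862}, \cite[Theorems 3.5(2), 3.6]{MR2920881}). Without it, your freeness and your quotient by $I_{Q_m}$ refer to the wrong module structure.

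Relatedly, $\Gamma_0^{(2)}(M\Delta_K)$ is not neat: it contains torsion such as $\mathrm{diag}(1,-1,-1,1)$. So freeness of the $\Delta_{Q_m}$-action cannot be justified by neatness of the tame level, and your plan to check that Pilloni's arguments ``only use neatness'' would not apply as stated. Freeness has to be imported from \cite[Theorem 9.1]{MR3059119}, as the paper does.

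Second, in (i) you only show that the image of $\TT_{Q_m,(k'_1,k'_2)}$ is contained in $\TT_{(k'_1,k'_2)}$. Equality also requires the $T_{q,i}$ for $q\in Q_m$ to lie in that image. The paper gets this from Chebotarev density: the Hecke algebras are generated by Frobenius traces at primes outside any finite set (\cite[Corollary 6.1]{MR2920881}).
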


\begin{proof}
Let $q \in Q_m$. We note that the restriction to $I_q$,  of the universal Galois deformation valued in $R^{\min}_{Q_m,(k'_1,k'_2)}$ is of the form $\begin{pmatrix} 1 & 0 & 0 & 0 \\ 0 & 1 &  0 & 0 \\ 0 & 0 & \chi_q & 0 \\ 0 & 0 & 0 & \chi_q\end{pmatrix}$ under a suitable basis. Here,  $\chi_q$ is a $R^{\min}_{Q_m,(k'_1,k'_2)}$-valued character that lifts the trivial character of $I_q$ and that is obtained from the restriction of a $R^{\min}_{Q_m,(k'_1,k'_2)}$-valued character of the decomposition group $G_{\Q_q}$. Hence, the character $\chi_q$ factors as $\chi_q: \Delta_q \rightarrow \left(R^{\min}_{Q_m,(k'_1,k'_2)}\right)^\times$. This allows us to obtain the $\mathrm{O}[\Delta_{Q_m}]$-structure on $R^{\min}_{Q_m,(k'_1,k'_2)}$. Since the universal deformation ring $R^{\min}_{(k'_1,k'_2)}$ parametrizes deformations of the type defined in Definition \ref{defi:Qdef}, that in particular are unramified at all the Taylor--Wiles primes $q \in Q_m$, the universal property gives us a natural isomorphism $R^{\min}_{(k'_1,k'_2)} \cong R^{\min}_{Q_m,(k'_1,k'_2)}/I_{Q_m}$. This proves point (\ref{point1}). 

The surjection in equation (\ref{eq:RTTWsurj}) provides us an $\mathrm{O}[\Delta_{Q_m}]$-structure on $M_{Q_m,(k'_1,k'_2)}$.  Using \cite[Theorems 2.2.5, 8.2.1]{MR2234862} (see also \cite[Theorems 3.5(2),3.6]{MR2920881}), one observes that, for each $q \in Q_m$, the action of $\Delta_q$ on $M_{Q_m,(k'_1,k'_2)}$ via the $p$-primary part of  $\mathrm{Kli}^{(0)}_q/\mathrm{Kli}^{(1)}_q \cong (\Z/q\Z)^\times$ matches with the $\Delta_q$-action obtained from equation (\ref{eq:RTTWsurj}). Pilloni's horizontal control theorem \cite[Theorem 9.1]{MR3059119} tells us that $M_{Q_m,(k'_1,k'_2)}$ is a free $\mathrm{O}[\Delta_{Q_m}]$-module and we have a natural isomorphism given by the first map of equation (\ref{eq:fixedbydeltaq}):
 \begin{align} \label{eq:isofirstmap}
 \dfrac{M_{Q_m,(k'_1,k'_2)} }{I_{Q_m} M_{Q_m,(k'_1,k'_2)}} \xrightarrow{\cong} M^{(0)}_{Q_m,(k'_1,k'_2)}.
 \end{align}
 To conclude that the second map of equation  (\ref{eq:fixedbydeltaq})
 \begin{align} \label{eq:isosecondmap}
 M^{(0)}_{Q_m,(k'_1,k'_2)}  \xrightarrow{\cong} M_{(k'_1,k'_2)}
 \end{align}
  is an isomorphism, one shows (following Genestier--Tilouine and Pilloni \cite{MR2234862, MR2920881}) that the homomorphisms
 \begin{align} \label{eq:levellowering}
S^{\ord}_{(k'_1,k'_2)}(\Gamma_{\Iw_p}, \mathrm{O})_{\mathfrak{m}'} \otimes_{\mathrm{O}} \mathrm{Frac}(\mathrm{O}) \rightarrow S^{\ord}_{(k'_1,k'_2)}(\Gamma_{\Iw_p,Q_m^{(0)}},\mathrm{O}) _{\mathfrak{m}^{(0)}} \otimes_{\mathrm{O}} \mathrm{Frac}(\mathrm{O}),
 \end{align}
 \begin{align} \label{eq:inversetoqstab}
 S^{\ord}_{(k'_1,k'_2)}(\Gamma_{\Iw_p}, \mathrm{O})_{\mathfrak{m}'} \otimes_{\mathrm{O}} \mathrm{Frac}(\mathrm{O})/\mathrm{O} \rightarrow S^{\ord}_{(k'_1,k'_2)}(\Gamma_{\Iw_p,Q_m^{(0)}},\mathrm{O})_{\mathfrak{m}^{(0)}}\otimes_{\mathrm{O}}\mathrm{Frac}(\mathrm{O})/\mathrm{O},
 \end{align}
 are isomorphisms. The fact that the map in equation (\ref{eq:levellowering}) is an isomorphism is a level lowering type argument. For example, see \cite[Proposition 6.2]{MR2920881}. The fact that the map in equation (\ref{eq:inversetoqstab}) is an isomorphism follows by constructing an explicit inverse to the $q$-stabilization map (as given in  \cite[Proposition 3.3]{MR2920881}, \cite[Corollary 3.2.4]{MR2234862}) which in turn relies on our choice of the ordering of the characters $\bar\chi_{i,q}$'s for Taylor--Wiles prime $q$ as given in Definition \ref{defn:primesTW}. See \cite[Proposition 6.3]{MR2920881}.  Combining isomorphisms in equations (\ref{eq:isofirstmap}) and (\ref{eq:isosecondmap}) gives us point (\ref{point2}) and point \ref{roman2}. 
 
 Point \ref{point3} follows directly from the surjections of equation (\ref{eq:RTTWsurj}). We have the following diagram:
 \begin{align*}
 \xymatrix{\TT_{Q_m,(k'_1,k'_2)} \ar@{.>}[d]\ar[r]& \mathrm{End}_{\mathrm{O}}\left(M_{Q_m,(k'_1,k'_2)} \right) \ar[r]& \mathrm{End}_{\mathrm{O}}\left( \dfrac{M_{Q_m,(k'_1,k'_2)} }{I_{Q_m} M_{Q_m,(k'_1,k'_2)}}\right) \ar[d]^{\cong} \\
\TT_{(k'_1,k'_2)} \ar[rr]& &  \mathrm{End}_{\mathrm{O}}\left(M_{(k'_1,k'_2)}\right)}
 \end{align*}
To see that one gets an induced surjective map $\TT_{Q_m,(k'_1,k'_2)} \rightarrow \TT_{(k'_1,k'_2)}$, one combines the construction of the Galois representation as given in Lemma \ref{lem:deflem}, the surjections of equation (\ref{eq:RTTWsurj}) and the Chebotarev density theorem (to see that traces of Frobenii of  primes outside any finite set containing $S \cup Q_m$ generate the Hecke algebras).  See \cite[Corollary 6.1]{MR2920881}. This allows us to obtain point \ref{roman1}. This completes the proof. 
\end{proof}

Propositions \ref{prop:pilloni} and \ref{prop:TWsystemexistence} allow us to apply \cite[Theorem 4.2]{MR2920881} to obtain the following ring isomorphism for $k'_1 > k'_2 \gg 0$, with $(k'_1,k'_2) \equiv (a_0,b_0) \pmod{(p-1)\Z^2}$. We note that \cite[Theorem 4.2]{MR2920881} is a modification of the Taylor--Wiles method as developed by Diamond \cite{MR1440309} and Fujiwara \cite{fujiwara2006deformation}. 
\begin{align} \label{eq:TWisofixedweightsk1k2}
  R^{\min}_{(k'_1,k'_2)} \cong  \TT_{(k'_1,k'_2)}.
\end{align}

\subsection{Proof of Theorem \ref{thm::rist}}

From Theorem \ref{thm:symplecticbasis}, we see that the Galois representation $\rho_\TT$ satisfies the deformation properties listed in Definition \ref{def:orddef}. This induces a ring homomorphism \[\Psi : R^{\min} \to \TT.\] Let $\rho^{\min} : G_{\Q,S} \to \GSp_4(R^{\min})$ denote the universal minimal ordinary deformation of $\bar\rho$. We note that   there exists a matrix $ P_{\rho^{\min},p} \in \GSp_4(R^{\min})$ such that for all $g \in G_{\Q_p}$, we have
    \[\rho^{\min}(g) = P_{\rho^{\min},p} \begin{pmatrix} \chi^{\min}_1(g) & * & * & * \\ 0 & \chi^{\min}_2(g) & * & *\\0 & 0 & \chi^{\min}_3(g) & * \\ 0 & 0 & 0 & \chi^{\min}_4(g)\end{pmatrix}P_{\rho^{\min},p}^{-1}.\]
          Here, for each $i \in \{1,2,3,4\}$, we have a character $\chi^{\min}_i:G_{\Q_p} \rightarrow (R^{\min})^\times$.  Furthermore, the character $\chi^{\min}_4$ is unramified. We let $\eta_i$ denote the restrictions of the characters $\chi_i^{\min}$ to the inertia group $I_p$. Local class field theory would tell us that these are characters of $\Z_p^\times$. Note that $\eta_2$ and $\eta_3$ lift $\omega^{a_0-1}$ and $\omega^{b_0-2}$ respectively. The restriction of the characters $\eta_2 \otimes \epsilon, \eta_3 \otimes \epsilon^2 : I_p \to (R^{\min})^\times$ to the pro-$p$ subgroup $1+p\Z_p$ of $\Z_p^\times$ induces ring homomorphisms $\psi_1 : \Z_p\llbracket X \rrbracket \to R^{\min}$ and $\psi_2 : \Z_p\llbracket Y \rrbracket \to R^{\min}$. Therefore, we get a map from $\Lambda \simeq \Z_p\llbracket X \rrbracket \widehat{\otimes} \Z_p\llbracket Y \rrbracket$ to $R^{\min}$ making it into a $\Lambda$-algebra (see \cite[Section 5.5]{MR2920881}). Under this $\Lambda$-structure, one notes that $\Psi$ is a $\Lambda$-algebra morphism.

To show that $\Psi$ is surjective, we note that the characteristic polynomial of $\Psi\circ\rho^{\min}(\Frob_\ell)$ (which equals $\rho_\TT(\Frob_\ell)$) matches with the Hecke polynomial at $\ell$, for all primes $\ell \notin S$. This would show that the image of $\Psi$ contains the Hecke operators $T_{\ell,2}, T_{\ell,1}$ and $T_{\ell,0}$  for all $\ell \notin S$. Theorem \ref{thm:symplecticbasis} tells us  \[\Psi \circ \chi^{\min}_4(\Frob_p) = U_{p,2}, \qquad \Psi \circ \dfrac{\chi^{\min}_3}{\omega^{b_0}\widetilde{\kappa}_2 \epsilon^{-2}}(\Frob_p) =U_{p,1}/U_{p,2}.\]
Here, $\widetilde{\kappa}_2:G_{\Q_p} \twoheadrightarrow 1+p\Z_p \hookrightarrow \Z_p\llbracket Y \rrbracket^\times \rightarrow (R^{\min})^\times$ is the tautological character obtained from $\psi_2$. Observe that this is indeed the character introduced in Theorem \ref{thm:symplecticbasis}. These arguments show that $\Psi$ is surjective. 

Suppose $(k'_1,k'_2) \equiv (a_0,b_0) \pmod{(p-1)\Z^2}$. Let $P_{(k'_1,k'_2)}$ denote the kernel of the ring homomorphism $\varphi_{(k'_1,k'_2)}:\Lambda \rightarrow \Z_p$. Universal properties of $R^{\min}$ and $R^{\min}_{(k'_1,k'_2)}$ provide us a natural isomorphism 
\begin{align}\label{iso:univdefspec}
\dfrac{R^{\min}}{P_{(k'_1,k'_2)}R^{\min}} \cong R^{\min}_{(k'_1,k'_2)}. 
\end{align}
When $k'_1 > k'_2 \gg 0$, equation (\ref{eq:TWisofixedweightsk1k2}) provides us an isomorphism $ R^{\min}_{(k'_1,k'_2)} \cong  \TT_{(k'_1,k'_2)}$. We get an induced commutative diagram where all morphisms are surjective. 
\begin{align*}
\xymatrix{
R^{\min}  \ar@{->>}[d]^{\otimes \Lambda/P_{(k'_1,k'_2)}}  \ar@{->>}[r]^{\Psi}& \TT  \ar@{->>}[d]^{\otimes \Lambda/P_{(k'_1,k'_2)}} \\
\dfrac{R^{\min}}{P_{(k'_1,k'_2)}R^{\min}} \ar[rd]^{\cong}  \ar@{->>}[r]& \dfrac{\TT}{P_{(k'_1,k'_2)} \TT}  \ar@{->>}[d]\\ &  \TT_{(k'_1,k'_2)} 
}
\end{align*}
 As a result, we obtain that whenever $ R^{\min}_{(k'_1,k'_2)} \cong  \TT_{(k'_1,k'_2)}$ holds, we obtain a natural induced isomorphism
\begin{align}\label{eq:perfectcontroliso}
 \dfrac{\TT}{P_{(k'_1,k'_2)}\TT}  \cong \TT_{(k'_1,k'_2)}.
\end{align}

Choose the minimal natural number amongst $\mathrm{Rank}_{\Z_p} \TT_{(k'_1,k'_2)}$, as we vary over all $(k'_1,k'_2) \equiv (a_0,b_0) \pmod{(p-1)\Z^2}$ such that the isomorphism $ R^{\min}_{(k'_1,k'_2)} \cong  \TT_{(k'_1,k'_2)}$ holds. Call it $n$. Topological Nakayama lemma \cite[\href{https://stacks.math.columbia.edu/tag/031D}{Tag 031D}]{stacks-project} tells us that $R^{\min}$ is finitely generated as a $\Lambda$-module and that we have $\Lambda$-module surjections:
\begin{align} \label{eq:isoranknfg}
\Lambda^n \twoheadrightarrow R^{\min} \stackrel{\Psi}{\twoheadrightarrow} \TT.
\end{align}
Going modulo $P_{(k'_1,k'_2)}$ with $(k'_1,k'_2)$ as  specified above, the minimality of $n$ and equation (\ref{eq:perfectcontroliso}) now provide  isomorphisms: 
\begin{align} \label{eq:isoranknfg2}
\Z_p^n \cong \dfrac{R^{\min}}{P_{(k'_1,k'_2)}R^{\min}}\cong  \dfrac{\TT}{P_{(k'_1,k'_2)}\TT}.
\end{align}
Since the set of height two prime ideals $P_{(k'_1,k'_2)}$'s with $(k'_1,k'_2) \equiv (a_0,b_0) \pmod{(p-1)\Z^2}$ and $k'_1 > k'_2 \gg 0$ are dense in $\mathrm{Spec}(\Lambda)$ (see \cite[Lemma A1]{hsieh_yoshida}), a standard density argument now tells us that the maps in equation (\ref{eq:isoranknfg}) are in fact isomorphisms. This completes the proof. \qed

\section{Proof of Theorem \ref{thm:yoshidafamily}}\label{sec:proofyoshidafamily}

{\theoremtwo*} 

Note that the proof of Theorem \ref{thm:R=T} (see equation (\ref{eq:perfectcontroliso})) provides us the following ring isomorphisms for $k'_1 > k'_2 \gg 0$ with $k'_1 \equiv k_1 \pmod{(p-1)}$ and $k'_2 \equiv k_2 \pmod{(p-1)}$:
\begin{align*}
\mathcal{T} \otimes_{\Z_p\llbracket y_1,y_2\rrbracket} \dfrac{\Z_p\llbracket y_1,y_2\rrbracket}{\mathcal{P}_{(k'_1,k'_2)}} &\cong \Rmin \otimes_{\Z_p\llbracket y_1,y_2\rrbracket} \dfrac{\Z_p\llbracket y_1,y_2\rrbracket}{\mathcal{P}_{(k'_1,k'_2)}} \cong \Rmin_{(k'_1,k'_2)} \cong \TT_{(k'_1,k'_2)}.
\end{align*}
Here, $\mathcal{P}_{(k'_1,k'_2)}$ is the prime ideal $((1+y_1)-(1+p)^{k'_1},(1+y_2)-(1+p)^{k'_2})$ of $\Z_p\llbracket y_1,y_2\rrbracket$.  Since the Hecke algebra $\TT_{(k'_1,k'_2)}$ is $\Z_p$-torsion free, \cite[Corollary 5.4]{MR2055355} lets us conclude that the ring $\TT_{(k'_1,k'_2)}$ is reduced.

Let $\mathcal{P}$ denote the prime ideal of $\mathcal{T}$ lying over $\mathcal{P}_{(k'_1,k'_2)}$ corresponding to $f$. Let $P$ denote the  prime ideal of $\TT_{(k'_1,k'_2)}$ corresponding to $\mathcal{P}$.  The localization $\left(\TT_{(k'_1,k'_2)}\right)_{P}$, being a reduced Artinian local ring, is a field. Consequently, the localization $\left(\dfrac{\TT}{\mathcal{P}_{(k'_1,k'_2)}\TT}\right)_{\mathcal{P}}$, being isomorphic to $\left(\TT_{(k'_1,k'_2)}\right)_{P}$, is also a field. This observation lets us conclude that $\mathcal{P}_{(k'_1,k'_2)}\TT_{\mathcal{P}}$ is the maximal ideal of $\TT_{\mathcal{P}}$. Thus, the maximal ideal of the localization $\TT_{\mathcal{P}}$ (which has Krull dimension $2$) is generated by $2$ elements. These observations let us conclude that $\TT_{\mathcal{P}}$ is a regular local ring (and hence a domain). It thus follows that there exists a unique minimal prime $\eta$ of $\TT$ contained in $\mathcal{P}$ (that corresponds to the zero ideal of the domain $\TT_{\mathcal{P}}$). \qed

\bibliography{biblio}
\bibliographystyle{amsplain}
\end{document}